\titleformat{\chapter}[display]   
{\large \bfseries}{\chaptertitlename\ \thechapter}{16pt}
{\normalsize \thechapter{.}\ }[]
\titleformat{\section}[display]   
{\scshape}{}{0.5ex}
{\thesection. \hsp \hsp \centering}[]
\patchcmd{\thebibliography}{%
  \section*{References}}{}{}% use * so that the sec is not numbered.
\let\ppprod\prod
\let\sssum\sum
\renewcommand{\prod}{\mathlarger{\ppprod}} %makes product symbols larger
\renewcommand{\sum}{\mathlarger{\sssum}} %makes summation symbols larger
\newcommand{\lb}{\left[}
\newcommand{\rb}{\right]}
\newcommand{\lp}{\left(}
\newcommand{\rp}{\right)}
\newcommand{\lbr}{\left\lbrace}
\newcommand{\rbr}{\right\rbrace}
\newcommand{\nCr}[2]{{{#1}\choose{#2}}}
\newcommand{\hspm}{\hspace{-0.1cm}}
\newcommand{\hsp}{\hspace{0.1cm}}
\newcommand{\R}{{\bf R}}
\newcommand{\Z}{{\bf Z}}
\newcommand{\SL}{\mathrm{SL}}
\newcommand{\expect}{\mathbb{E}}
\numberwithin{equation}{section}
\newtheoremstyle{sltheorem}
{1.2em}                % Space above
{1.2em}                % Space below
{\slshape}        % Theorem body font % (default is "\upshape")
{}                % Indent amount
{\scshape}       % Theorem head font % (default is \mdseries)
{.}               % Punctuation after theorem head % default: no punctuation
{.5em}               % Space after theorem head
{}                % Theorem head spec
\theoremstyle{sltheorem}
\newtheorem{theorem}{Theorem}
\newtheorem{prop}[theorem]{Proposition}
\newtheorem{lemma}[theorem]{Lemma}
\numberwithin{theorem}{section}
\begin{document}
\thispagestyle{empty}
\begin{center}
\noindent
\textbf{ON THE DISTRIBUTION OF ANGLES BETWEEN \\ INCREASINGLY MANY SHORT LATTICE VECTORS}\\ \vspace{0.3cm}
\textsc{Kristian Holm} \quad \quad \today
\end{center} 
\vspace{0.4cm}
\begin{adjustwidth}{5em}{5em}
\begin{small}
\textbf{Abstract.} Following Södergren, we consider a collection of random variables on the space $X_n$ of unimodular lattices in dimension $n$: Normalizations of the angles between the $N = N(n)$ shortest vectors in a random unimodular lattice, and the volumes of spheres with radii equal to the lengths of these vectors. We investigate the expected values of certain functions evaluated at these random variables in the regime where $N$ tends to infinity with $n$ at the rate $N = \smash{o \lp n^{1/6} \rp}$. Our main result is that as $n \longrightarrow \infty$, these random variables exhibit a joint Poissonian and Gaussian behaviour. 
\end{small}
\end{adjustwidth}
%\vspace{0.4cm}
\section{Introduction}
Let $X_n$ be the space of $n$-dimensional unimodular lattices. As a homogeneous space, $X_n$ may be identified with the quotient $\SL (n, \R) / \SL (n, \Z)$, and therefore it inherits an $\SL(n, \R)$-invariant measure from the Haar measure on the special linear group. In spite of being non-compact, the quotient $\SL (n, \R) / \SL (n, \Z)$ has finite measure (see e.g. \cite[Thm. 7.0.1]{morris}), and hence, after normalizing, we obtain a probability measure $\mu_n$ on $X_n$ and can talk meaningfully about \textit{random lattices.} \par 
Let $L \in X_n$ and let $N \geqslant 2$ be an integer. If we let $\sim$ be the relation on $L$ given by $\textbf{v} \sim \textbf{w} \Leftrightarrow \textbf{v} = \pm \textbf{w}$,
we consider representatives $\textbf{v}_1, \ldots, \textbf{v}_N$ of the equivalence classes corresponding to the $N$ shortest vectors in $L$.  We then obtain a finite non-decreasing sequence of positive real numbers given by the lengths of the $\textbf{v}_i$, namely
\begin{align*}
0 < |\textbf{v}_1| \leqslant |\textbf{v}_2 | \leqslant \cdots \leqslant |\textbf{v}_N|.
\end{align*}
By considering these lengths as radii of Euclidean balls in $\R^n$, we obtain the sequence
\begin{align*}
0 < \mathcal{V}_1 (L) \leqslant \cdots \leqslant \mathcal{V}_N (L) \quad \quad \mathcal{V}_j(L) := \frac{\pi^{n/2}}{\Gamma (n/2 + 1)} |\textbf{v}_j|^n,
\end{align*}
so that $\mathcal{V}_j := \mathcal{V}_j (L)$ is the volume of an $n$-dimensional ball of radius $|\textbf{v}_j|$. The sequence $\lbrace \mathcal{V}_j \rbrace_{j \geqslant 1}$, and its first few elements in particular, encodes important geometric information about the lattice $L$. For example, the quantity
\begin{align*}
2^{-n} \sup_{L \in X_n} \left\lbrace \mathcal{V}_1(L) \right\rbrace
\end{align*}
determines the maximal density of an $n$-dimensional lattice sphere packing. Therefore it is an interesting question how $\lbrace \mathcal{V}_j (L) \rbrace_{j \geqslant 1}$ is distributed; in particular if it has any limiting distribution as $\dim L = n$ tends to infinity. The first result concerning this matter deals with the case $N = 1$ and is due to Rogers who proved \cite[Thm. 3]{rogers56} that $\mathcal{V}_1$ is exponentially distributed with mean $2$ in the limit as $n \longrightarrow \infty$. Södergren generalized this result by determining the limiting distribution for any fixed number $N$ of lattice vectors. His result \cite[Thm. 1]{sodapoisson} states that for fixed $N \geqslant 1$, the sequence $\lbrace \mathcal{V}_j \rbrace_{j \leqslant N}$ in fact converges in distribution to the first $N$ points of a Poisson process on $\R_+$ with intensity $\tfrac{1}{2}$. \par 

Aside from the lengths of the shortest vectors, we also want to consider the angles between them. Given that we are only counting one of the vectors $\textbf{v}_i$ and $-\textbf{v}_i$, however, the angles between the $\textbf{v}_i$ are not a priori well-defined. We circumvent this ambiguity by always taking the angle in the interval $\lb 0, \tfrac{\pi}{2} \rb$. More precisely, if $a(\textbf{x}, \textbf{y}) \in \lb 0, \pi \rb$ is the angle between two non-zero vectors $\textbf{x}, \textbf{y} \in \R^n$, we define 
\begin{align*}
\varphi_{ij} := \varphi_{ij}(L) := \varphi (\textbf{v}_i, \textbf{v}_j) := \begin{cases}
a(\textbf{v}_i, \textbf{v}_j ) \quad &\text{if } a(\textbf{v}_i, \textbf{v}_j ) \in \lb 0, \tfrac{\pi}{2} \rb,\\
\pi - a(\textbf{v}_i, \textbf{v}_j) \quad &\text{otherwise,}
\end{cases}
\end{align*}
for all \textit{admissible} pairs $(i,j)$, by which we understand a pair of integers $i$ and $j$ satisfying $1 \leqslant i < j \leqslant N$. As a consequence of the counter-intuitive fact that the volume of the unit sphere in high dimension is concentrated, to a large extent, around its "poles," it is known that in high dimension, the vectors $\textbf{v}_i$ are very close to being orthogonal. More precisely, for a fixed integer $N \geqslant 2$, 
\begin{align}\label{p1-massconcentration}
\mu_n \left( \left\lbrace L \in X_n : \tfrac{\pi}{2} - \varphi_{ij} > \tfrac{C}{\sqrt{n}} \text{ for some $i < j \leqslant N$} \right\rbrace \right) \longrightarrow 0
\end{align}
as $n, C \longrightarrow \infty$, cf. \cite[Prop. 1.1]{soda10}. This fact motivates the study of the \textit{normalized} random variables $\lbrace \tilde{\varphi}_{ij} \rbrace$,
\begin{align}\label{p1-disdanormalization}
\tilde{\varphi}_{ij} := \tilde{\varphi}_{ij} (L) := \sqrt{n}\left( \tfrac{\pi}{2} - \varphi_{ij} \right).
\end{align}
\par If we let $L$ vary over the space $X_n$, the $\mathcal{V}_j (L)$ ($1 \leqslant j \leqslant N$) and the $\tilde{\varphi}_{ij}(L)$ ($1 \leqslant i < j \leqslant N$) form a collection of $N + \smash{\nCr{N}{2}}$ random variables on the space of unimodular lattices. In \cite{soda10} Södergren studied the distribution of these in high dimension and proved the following theorem \cite[Thm. 1.2]{soda10}: If $N \geqslant 2$ is fixed, then the distribution of the random vector
\begin{align*}
\textbf{w}(L, N) := \left( \mathcal{V}_1, \ldots, \mathcal{V}_N, \tilde{\varphi}_{12}, \ldots, \tilde{\varphi}_{(N-1)N} \right)
\end{align*}
converges as $n \longrightarrow \infty$ to the joint distribution of the first $N$ points of a homogeneous Poisson process on $\R_+$ with intensity $\tfrac{1}{2}$ and a collection of $\nCr{N}{2}$ independent positive standard Gaussians that are also independent of the firsts $N$ random variables.\\\\
\textsc{Remark.} Here and throughout the paper, a "positive standard Gaussian" is to be understood in the sense of a random variable distributed like $|X|$ for $X \sim N(0,1)$.\\
\par Södergren's theorem marks the starting point of the present investigation. Intuitively, we want to investigate the distributional limit when we allow for more and more of the shortest lattice vectors to be taken into account as the dimension $n$ grows. In more detail, our question regards the existence and nature of a "limiting distribution" as $n \longrightarrow \infty$ of $\textbf{w}(L, N)$ if we allow $N$ to \textit{grow as a small power of} $n$. Quotation marks seem to be in order here, for the random vector takes values in a space that changes according with the parameter that tends to infinity. More precisely, we want to investigate the following: \textit{If we take $N = N(n) \sim n^\delta$, for suitable $\delta > 0$, can we still "observe" the joint Gaussian and Poissonian behaviour of the random vector} $\textbf{w}(L, N)$ \textit{as $n \longrightarrow \infty$?} 
\par Of course, we should be more explicit about what we mean by "observe." To clarify this, suppose that $0 < T_1 < T_2 < \cdots$ are the points of a homogeneous Poisson process on $\R_+$ with intensity $\frac{1}{2}$ and let the random variables $\Phi_{ij}$ (for $1 \leqslant i < j \leqslant N$) be positive standard Gaussians. Suppose also that $f$ is a a compactly supported and measurable function, possibly with additional properties as well. Then we are looking for an asymptotic estimate of the form 
\begin{align}\label{p1-content}
\expect \lb f\left(\mathcal{V}_1, \ldots, \mathcal{V}_N, \tilde{\varphi}_{12}, \ldots, \tilde{\varphi}_{(N-1)N}  \right) \rb = \expect \lb f \left( T_1, \ldots, T_N, \Phi_{12}, \ldots, \Phi_{(N-1)N} \right) \rb + \text{error}
\end{align}
for some explicit error term. Note that in the case where $N$ is fixed relative to $n$, if (\ref{p1-content}) holds for all continuous $f$ with compact support with an error that tends to $0$ as $n \longrightarrow \infty$, one knows that $\textbf{w}(L,N)$ converges in distribution to $\lp T_1, \ldots, T_N, \Phi_{12}, \ldots, \Phi_{N-1,N} \rp$, cf. \cite[Thm. 5.3]{bhat+way}. The fact that such an asymptotic estimate holds in the present context is the main theorem of this paper. 
\begin{theorem}\label{p1-maintheorem}
Let $N =N(n) \geqslant 2$ be an integer with $N \longrightarrow \infty$ as $n \longrightarrow \infty$ and $N = o\lp n^{1/6} \rp$. Write $b(N) = \nCr {N}{2}$, and let $K \geqslant 3$ and $M > 0$. Suppose that
\begin{align*}
f_{0,N} : \lp \R_{\geqslant 0} \rp^{N} \longrightarrow \R_{\geqslant 0}, \quad \quad f_{1,N} : \lp \R_{\geqslant 0} \rp^{b(N)} \longrightarrow \R_{\geqslant 0}
\end{align*}
are Borel measurable functions with $\int f_{1,N} > 0$, satisfying the properties
\begin{align*}
\left\Vert f_{0,N} \right\Vert_\infty , \left\Vert f_{1,N} \right\Vert_\infty \leqslant \sqrt{M}, \quad \quad \mathrm{supp} \lp f_{0,N} \rp \subset \lb 0 , K \rb^N, \quad \quad \mathrm{supp} \lp f_{1,N} \rp \subset \lb 0, K \rb^{b(N)}.
\end{align*}
Define $f_N \lp x_1, \ldots, x_N, x_{N+1}, \ldots, x_{N + b(N)} \rp := f_{0,N} \lp x_1, \ldots, x_N \rp f_{1,N} \lp x_{N+1}, \ldots, x_{N + b(N)} \rp$ for all non-negative real numbers $x_1, \ldots, x_{N + b(N)}$. Finally, let 
\begin{align*}
G_f (N) = \expect \lb f_{1,N} \lp \Phi_{12}, \ldots, \Phi_{N-1,N} \rp \rb.
\end{align*}
Then, as $n \longrightarrow \infty$, we have
\begin{align}\label{p1-statementofmain}
\nonumber \expect \lb f_N \left(\mathcal{V}_1, \ldots, \mathcal{V}_N, \tilde{\varphi}_{12}, \ldots, \tilde{\varphi}_{N-1,N}\right) \rb &= \lp 1 + O_K \lp \frac{N^3}{\sqrt{n}} \rp \rp \expect \lb f_N \left(T_1, \ldots, T_N, \Phi_{12}, \ldots, \Phi_{N-1,N} \right) \rb   \\ &\quad \quad + O_{K,M} \left( 2^{-N} G_f(N) \left( \frac{2 e K}{N^2} \right)^{N^2} + \lp \frac{\sqrt{3}}{2} \right)^n \hspace{-0.1cm} K^{4N^4} \right).
\end{align}
\end{theorem}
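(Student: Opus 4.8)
The plan is to write the left-hand side of \eqref{p1-statementofmain} as a Rogers-type integral over $X_n$ and then isolate the contribution of \emph{generic} configurations of short vectors. Since $\mathrm{supp}(f_N)\subset\lb0,K\rb^{N+b(N)}$, only lattices whose $N$ shortest classes all have ball-volume $\leqslant K$ contribute, so $\expect\lb f_N(\mathcal V_1,\ldots,\mathcal V_N,\tilde\varphi_{12},\ldots)\rb$ equals the integral over $X_n$ of a weighted count of ordered $N$-tuples $(\mathbf x_1,\ldots,\mathbf x_N)$ of lattice points in pairwise distinct $\pm$-classes, each with $\mathcal V(\mathbf x_i)\leqslant K$, subject to the constraint that no further class has volume $\leqslant\mathcal V(\mathbf x_N)$; the weight is $f_{0,N}$ of the sorted volumes times $f_{1,N}$ of the correspondingly ordered normalized angles $\sqrt n(\tfrac{\pi}{2}-\varphi(\mathbf x_i,\mathbf x_j))$. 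Expanding the ``no further class'' constraint by inclusion--exclusion in the number $m\geqslant0$ of surplus short classes, and truncating the expansion at a moderate value of $m$ (which is legitimate since the relevant Rogers integral for $(N+m)$-fold sums requires only $N+m<n$), turns the expectation into a short alternating sum of Rogers integrals; the $m=0$ term will produce the main term of \eqref{p1-statementofmain} and the $m\geqslant1$ terms together with the truncation error will be absorbed into the error.

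For the $m=0$ integral I would apply the Siegel--Rogers summation formula and keep only the contribution of primitive, linearly independent $N$-tuples; up to arithmetic factors of size $1+O(2^{-n})$ coming from the restriction to primitive classes, this equals $\tfrac1{N!}\int_{(\R^n)^N}(\text{weight})\,d\mathbf x_1\cdots d\mathbf x_N$. Passing to polar coordinates $\mathbf x_i=r_i\mathbf u_i$ with $\mathbf u_i\in S^{n-1}$ and then to the volume variables $v_i=\tfrac{\pi^{n/2}}{\Gamma(n/2+1)}r_i^{\,n}$, the Jacobian is constant and the radial part decouples: over the ordered simplex $0<v_1<\cdots<v_N<K$ the radial measure becomes $\prod_i(\tfrac12\,dv_i)$, so integrating $f_{0,N}$ reproduces exactly $\expect\lb f_{0,N}(T_1,\ldots,T_N)\rb$, the law of the first $N$ points of the intensity-$\tfrac12$ Poisson process. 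The angular part is the analytic heart: one must show that
\[
\int_{(S^{n-1})^N}f_{1,N}\!\left(\sqrt n(\tfrac{\pi}{2}-\varphi_{12}),\ldots,\sqrt n(\tfrac{\pi}{2}-\varphi_{N-1,N})\right)d\sigma(\mathbf u_1)\cdots d\sigma(\mathbf u_N)=G_f(N)\bigl(1+O_K(N^3/\sqrt n)\bigr),
\]
which is an effective joint central limit theorem for the $b(N)$ off-diagonal entries of the Gram matrix of $N$ uniform unit vectors in $\R^n$. After replacing $\tfrac{\pi}{2}-\varphi_{ij}$ by $|\langle\mathbf u_i,\mathbf u_j\rangle|$ (a substitution costing $O(1/n)$ per pair on the support), I would change variables to the Gram matrix $G$, whose density carries a determinantal factor $\det(G)^{(n-N-1)/2}$; expanding $\log\det G=-\sum_{i<j}g_{ij}^2+O(\|G-I\|^3)$, controlling the cubic remainder via $|g_{ij}|\lesssim K/\sqrt n$ on the support of $f_{1,N}$, and accounting for the normalizing constants of the density then yields the Gaussian product measure with the stated rate.

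The non-main contributions are of two kinds. First, non-generic lattice configurations: a non-primitive short vector $\mathbf x=k\mathbf w$ with $k\geqslant2$ forces $\mathcal V(\mathbf w)\leqslant K/2^n$ and is therefore exponentially rare, while a linear dependence among the $\mathbf x_i$ places a further short class into the two-plane they span, and two-dimensional lattice geometry (the extremal $\cos 30^{\circ}=\tfrac{\sqrt3}{2}$ configuration) bounds the corresponding Rogers integral by $(\tfrac{\sqrt3}{2})^n$ times a crude volume/moment factor of the form $K^{O(N^4)}$; these combine to the term $(\tfrac{\sqrt3}{2})^nK^{4N^4}$. Second, the surplus terms $m\geqslant1$ of the inclusion--exclusion and the truncation of the angular estimate to the support of $f_{1,N}$: these are dominated by the tail of the point count together with the normalization of the $b(N)$-dimensional Gaussian integral, producing the term $2^{-N}G_f(N)(2eK/N^2)^{N^2}$. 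The arithmetic ($\zeta$-value) discrepancies are $O(2^{-n})$ and are subsumed into the multiplicative factor $1+O_K(N^3/\sqrt n)$. Collecting everything gives \eqref{p1-statementofmain}.

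The main obstacle is the effective, $N$-uniform Gaussian approximation of the joint law of the $b(N)\asymp N^2$ normalized angles. For fixed $N$ this is classical and underlies \cite[Thm.~1.2]{soda10}, but here $n$ and $N$ tend to infinity together, so one has to track precisely how the various errors in the Gram-matrix analysis --- the cubic remainder in $\log\det G$, the ratios of multivariate Gamma normalizations, and the reduction from $(S^{n-1})^N$ to the space of correlation matrices --- depend on $N$, and check that their total is $O_K(N^3/\sqrt n)$, which is $o(1)$ exactly when $N=o(n^{1/6})$. A secondary difficulty, and the origin of the super-exponential-in-$N^2$ error term, is controlling the inclusion--exclusion over surplus short vectors uniformly as $N\to\infty$.
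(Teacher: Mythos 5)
Your high-level plan agrees with the paper's: Rogers' integration formula, inclusion--exclusion over ``surplus'' short vectors, factorization of the main Rogers integral into a radial (Poisson) piece and an angular (Gaussian) piece, and then a quantitative Gaussian approximation of the angular integral. Where you depart from the paper is in the mechanism for the angular central limit theorem: you propose passing to the Gram matrix $G = (\langle \mathbf{u}_i, \mathbf{u}_j\rangle)$ and expanding $\det(G)^{(n-N-1)/2}$, whereas the paper works in iterated spherical coordinates $\phi_{ij}$, uses the explicit diffeomorphism $J_N$ in (\ref{p1-fuckedupfunction}) and its Jacobian $\prod \sin(\phi_{ij})^{N-i}/\sin\alpha_{ij}$, and then proves the Gaussian approximation via Lemmas~\ref{p1-lemma2.3} and~\ref{p1-lemma2.5}. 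Your Gram-matrix route is viable and arguably cleaner for one technical point the paper has to work for: the paper must verify (\ref{p1-theultimatecontainment}), i.e.\ that the image $\Omega_N = J_N((0,\pi)^{b(N)})$ contains the shrinking box $\mathbf{p} + n^{-1/2}[-K,K]^{b(N)}$ uniformly as $N \to\infty$, and this occupies the entire Appendix (Proposition~\ref{p1-Prop3.3}). In the Gram picture the analogous domain issue is immediate: a symmetric matrix with unit diagonal and off-diagonal entries $O(K/\sqrt{n})$ is positive definite once $NK/\sqrt{n} < 1$, which holds trivially for $N = o(n^{1/6})$. The price you pay is that you must instead track, uniformly in $N$, the multivariate Gamma normalization of the Gram density and the cubic remainder in $\log\det G$; you flag both, but the proposal stops short of verifying that their totals are $O_K(N^3/\sqrt{n})$.

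Two details warrant correction. First, you attribute the $(\sqrt{3}/2)^n K^{4N^4}$ error to a geometric extremal ($\cos 30^\circ$, hexagonal) configuration arising from linear dependences. In the paper this term has nothing to do with that geometry: it comes directly from Rogers' own error bounds \cite[Lemmas~7, 8]{rogers56} applied to the ``non-diagonal'' matrices $D$ in the summation formula, which yield $2\cdot 3^{\lfloor \lambda^2/4\rfloor}(3/4)^{n/2}(K+1)^\lambda + 21\cdot 5^{\lfloor\lambda^2/4\rfloor}2^{-n}(K+1)^\lambda$; the factor $(\sqrt{3}/2)^n$ is just $(3/4)^{n/2}$, and the exponent $4N^4$ arises because the sieving is run up to level $\ell \approx N^2$ so $\lambda^2 \approx N^4$. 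Second, the statement that truncating the inclusion--exclusion is ``legitimate since the relevant Rogers integral for $(N+m)$-fold sums requires only $N+m<n$'' conflates two things: the truncation of the Bonferroni sums produces an actual signed error $\pm X(L,\ell,N)$ (see (\ref{p1-nice2})--(\ref{p1-nice1})), and its legitimacy rests on the two-sided sandwich $\expect[S^n_{\ell_1}] \leqslant \expect[f(\ldots)] \leqslant \expect[S^n_{\ell_0}]$ for adjacent even/odd $\ell$, not on applicability of Rogers' formula. You would also need to spell out the Poisson tail estimate (\ref{p1-poissonestimate}) that makes $\expect[X^\infty(\ell,N)]$ small and produces the factor $2^{-N}G_f(N)(2eK/N^2)^{N^2}$ at $\ell\approx N^2$; at present your proposal gestures at it without isolating the required Chernoff-type bound. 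With these corrections, and the Gram-matrix estimates carried out quantitatively, the proposal would amount to a genuine alternative proof.
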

\noindent \textsc{Remarks.} \par 1) An example of an explicit class of functions where the statement of Theorem \ref{p1-maintheorem} is meaningful, in the sense that the intended main term dominates, is the following: $f_{0,N}$ is the indicator function of the set $\lb 0, K \rb^{N}$, and $f_{1,N}$ is the indicator function of the Cartesian product $D_{12} \times \cdots \times D_{N-1,N}$ where $D_{ij} \subset \lb 0, K \rb$ is an arbitrary measurable set of Lebesgue measure at least $\xi > 0$, for each $(i,j)$. We elaborate on this in Section 4. \par
2) The proof of Theorem \ref{p1-maintheorem} follows Södergren's proof of \cite[Thm. 1.2]{soda10} and proceeds by rewriting all limits in his argument as effective asymptotic statements. It is not possible to improve on the allowed growth rate of $N$ with this method if one wants asymptotic statements where the intended main term dominates. \par
3) One could ask whether the normalization (\ref{p1-disdanormalization}) is still the most natural one to use in the situation where $N$ increases with $n$. That is, does a version of (\ref{p1-massconcentration}) still hold in the regime $N = o \lp n^{1/6} \rp$? We study this question in Section 5 and prove an version of (\ref{p1-massconcentration}) in the form of Proposition \ref{p1-anglenormalization-prop}. Although $C$ has to slowly tend to infinity with $N$ in this result, the proposition indicates that (\ref{p1-massconcentration}) is still a natural normalization in the current situation. \par 
4) If we take $f_N \leqslant M$ to be an arbitrary non-negative function supported on $\lb 0, K \rb^{N + b(N)}$, unfortunately we do not obtain an error term of the same quality as in Theorem \ref{p1-maintheorem}. In this case, we can prove that 
\begin{align}\label{p1-oldthm4}
\nonumber \expect \lb f_N (\mathcal{V}_1, \ldots, \mathcal{V}_N, \tilde{\varphi}_{12}, \ldots, \tilde{\varphi}_{N-1,N}) \rb &= \expect \lb f_N (T_1, \ldots, T_N, \Phi_{12}, \ldots, \Phi_{N-1,N}) \rb \\ &\quad \quad + O_{K,M} \left( \frac{ P_L(K)^{\nCr{N}{2}}(K/2)^N N^3}{N! \sqrt{n}} \right)
\end{align}
where $P_L$ denotes the left cumulative distribution function of the positive standard Gaussian distribution $\left| N(0,1) \right|$. The proof of this is identical to that of Theorem \ref{p1-maintheorem}, except that only a weaker version of the important auxiliary result Theorem \ref{p1-maintechnical} holds and can be applied. (See below for a brief outline of the proof of Theorem \ref{p1-maintheorem}.) Regrettably, the only case in which the main term in (\ref{p1-oldthm4}) dominates is the one where $f$ has full support in its last $\nCr{N}{2}$ variables.\par 
If we drop the condition that $f_N$ should be non-negative, but maintain that $f_N$ should satisfy the factorization property $f = f_{0,N} f_{1,N}$ mentioned in the theorem, it is possible that the main term on the right-hand side of (\ref{p1-statementofmain}) vanishes, at least unless other conditions on $f_N$ are imposed, even though the error terms may be non-zero. Moreover, the part of the proof of Theorem \ref{p1-maintechnical} that gives the better error term in (\ref{p1-statementofmain}), compared to (\ref{p1-oldthm4}), requires that, at the very least, $f_{1,N}$ should be non-negative or non-positive. \par 
5) As a first step towards Theorem \ref{p1-maintheorem}, we first examined the question of the distribution of normalized angles between $N$ random points on the sphere $S^{n-1}$ when $N$ tends to infinity with $n$. We obtained the result that if $N = o \lp n^{1/6} \rp$, one can still observe the standard Gaussian distribution among the normalized angles as in the case where $N$ is fixed compared to $n$. We prove this result in the form of Theorem \ref{p1-pointsonspheretheorem} in Section 2.\\ \par 
Theorem \ref{p1-maintheorem} is by no means the first result of its kind, in that the problem of examining the limiting distribution of (a subset of) the random variables $\mathcal{V}_j$ $(1 \leqslant j \leqslant N)$ and $\tilde{\phi}_{ij}$ $(1 \leqslant i < j \leqslant N)$, with $N$ depending on $n$, has been studied before on several occasions. For example, Kim proved \cite[Thm. 4]{kim16} that the Poissonian behaviour in $\mathcal{V}_1, \ldots, \mathcal{V}_N$, which Södergren observed in the case of a fixed $N$, prevails if, for an arbitrary $\varepsilon > 0$, $N \leqslant \lp \tfrac{n}{2} \rp^{1/2 - \varepsilon}$. This was later generalized by Strömbergsson and Södergren who proved \cite[Thm. 1.2]{stromsod} that the Poissonian behaviour prevails even in the subexponential regime $N = O_\varepsilon (e^{\varepsilon n})$ for any $\varepsilon > 0$. 
\par In spite of this, unfortunately it seems far too optimistic to hope for any results that involve a limiting distribution of the angles if one takes $N \sim n^{\delta}$ and $\delta > 1$. Aside from certain technical limitations in the use of Rogers' integration formula (see below), if $n = o \lp N \rp$, then by linear algebra, the increasing number of vectors under consideration ensures increasingly many linear dependencies between the short lattice vectors, cf. \cite[Prop. 3.1]{kim18}. Heuristically, then, these dependencies would interfere with the random behaviour observed for large $n$ in the case of a fixed number $N$ of lattice vectors. However, another theorem of Kim shows that it is possible to take $\delta$ very close to $1$ if one is content with proving a slightly different statement about the limiting distribution of the normalized angles. Concretely, if $U$ is the volume of the unit ball in $\R^n$ and $u$ is a point on $S^{n-1}$ chosen from a uniform distribution, he shows \cite[Thm. 1.4]{kim18} that with $N = o(n / \log n)$, which amounts to letting $\delta$ be any positive number satisfying
\begin{align*}
\delta = \delta(n) < 1 - \frac{\log \log n}{\log n},
\end{align*}
the two point processes 
\begin{align*}
\bigl( \left| \textbf{v}_i \right| , \pm  \textbf{v}_i / \left| \textbf{v}_i \right| \bigr), \quad \quad \lp \lp T_i / U \rp^{1/n}, \pm u \rp  \quad \quad (i = 1, \ldots, N)
\end{align*}
are asymptotically equal as $n \longrightarrow \infty$. In the case of a \textit{fixed} $N$, this statement is no less precise than what Södergren proves, since then the limiting distribution (as $n \longrightarrow \infty$) of the normalized angles between $N$ random points on $S^{n-1}$ is precisely equal to the joint distribution of $\nCr{N}{2}$ independent standard Gaussians, cf. \cite[Thm. 3.2]{soda10}. However, if one allows $N$ to grow with $n$, it is not clear whether these two distributions coincide. Therefore Kim's result can be viewed as a substantial widening of the allowance of the asymptotic behaviour of $N$ relative to $n$, obtained at a slight expense of precision in the statement about the limiting distribution of the normalized angles.\par 
The method used by Kim also differs from the method of Södergren. What motivated our investigations was the question of whether the latter method can be applied to yield a result where $N$ is allowed to grow with $n$, and where convergence of $\textbf{w}(L,N)$ to a jointly Poissonian and Gaussian distribution is proved, in the sense of (\ref{p1-content}). 
\par We now give a very brief outline of the structure of the proof of Theorem \ref{p1-maintheorem}. The main technical tool needed in the proof is an integration formula \cite[Thm. 4]{rogers55} due to Rogers. This formula can be expressed as follows. Let $\rho : \lp \R^n \rp^k \longrightarrow \R$ be a non-negative measurable function. For a lattice $L \in X_n$, let $L' = L \setminus \lbrace 0 \rbrace$ denote the punctured lattice obtained from $L$ by removing the origin. Then the integral
\begin{align*}
\int_{X_n} \sum_{\textbf{m}_i \in L' \atop (1 \leqslant i \leqslant k) } \rho \lp \textbf{m}_1, \ldots, \textbf{m}_k \rp \hsp d \mu_n (L)
\end{align*}
equals
\begin{align}\label{p1-rogersformula}
\nonumber &\int_{\R^n} \cdots \int_{\R^n} \rho \lp \textbf{x}_1, \ldots, \textbf{x}_k \rp \hsp d\textbf{x}_1 \cdots d\textbf{x}_k \\ &\quad \quad + \sum_{(\nu, \mu)} \sum_{q \geqslant 1} \sum_D \lp \frac{e_1}{q} \cdots \frac{e_m}{q} \rp^n \int_{\R^n} \cdots \int_{\R^n} \rho \lp \sum_{i = 1}^m \frac{d_{i1}}{q} \textbf{x}_i , \ldots, \sum_{i = 1}^m \frac{d_{ik}}{q}\textbf{x}_i \rp \hsp d \textbf{x}_1 \cdots d\textbf{x}_m.
\end{align}
In the last expression, the outermost sum is over all partitions
\begin{align*}
\lp \nu , \mu \rp = \lp \nu_1, \ldots, \nu_m, \mu_1, \ldots, \mu_{k-m} \rp
\end{align*}
of $\lbrace 1, \ldots, k \rbrace$ into two non-empty increasing sequences (that is, one has $1 \leqslant m \leqslant k-1$). The innermost sum is over all integer matrices $D$ all of whose entries have greatest common divisor $1$, and whose columns are all non-zero, and such that for $i,j = 1, \ldots, m$, one has $d_{i \nu_j} = q \delta_{ij}$, and such that if $\mu_j < \nu_i$ for any $i = 1, \ldots, m, \, j = 1, \ldots, k-m$, then $d_{i \mu_j} = 0$. Finally, for all $i$, $e_i = \gcd \lp q, \varepsilon_i \rp$ with $\varepsilon_i$ the $i$'th elementary divisor of the matrix $D$. \par 
Our concrete use of Rogers' formula is that it will be used to give an effective estimate of the expected value
\begin{align*}
\expect \lb \sum_{\textbf{n} \in M_\lambda} f_N \lp \mathcal{V}_{n_1}, \ldots, \mathcal{V}_{n_\lambda}, \tilde{\varphi}_{n_1 n_2}, \ldots, \tilde{\varphi}_{n_{N-1},n_N} \rp \rb,
\end{align*}
where $\lambda = N + \ell$ for a non-negative integer $\ell$, and the summation takes place over all $\lambda$-tuples $\textbf{n} = \lp n_1, \ldots, n_\lambda \rp$ of positive integers with pairwise distinct entries. Such an estimate is given by Theorem 3.1 in Section 3. In Section 5, we then use a sieving technique used by Södergren in \cite{soda10} to obtain from Theorem 3.1 an effective estimate of 
\begin{align*}
\expect \lb f_N \lp \mathcal{V}_1, \ldots, \mathcal{V}_N, \tilde{\varphi}_{12}, \ldots, \tilde{\varphi}_{N-1,N} \rp \rb
\end{align*}
and prove Theorem \ref{p1-maintheorem}. \newpage
\begin{center}
\textsc{Notation}
\end{center}
We give here a list of special notation used in the paper which the reader may refer back to. \\
\begin{flushleft}
$I(A)$ is the indicator function of the set or statement $A$. \vspace{0.1cm} \\
$L$ is a (Euclidean) unimodular lattice of dimension $n$. \\ \vspace{0.1cm}
$M_\lambda$ ($\lambda \geqslant 2$ an integer) is the set of $\lambda$-tuples of integers, all at least $1$, with pairwise distinct entries.\\ \vspace{0.1cm}
$N$ is the number of short vectors in $L$ under consideration.\\ \vspace{0.1cm}
$N_n (L, x) = \# \left\lbrace j : \mathcal{V}_j \leqslant x \right\rbrace$ is the number of lattice vectors in $L$ with length at most equal to the radius of the $n$-dimensional sphere of volume $x$.\\ \vspace{0.1cm}
$N_\infty ( \lb a, b \rb) = \# \left\lbrace j : a \leqslant T_j \leqslant b \right\rbrace$ is the number of points of the Poisson process $\lbrace T_i \rbrace$ that lie between $a$ and $b$.\\ \vspace{0.1cm}
$N_\infty (x) = N_\infty \bigl( \lb 0, x \rb \bigr)$ is the number of points of the Poisson process $\lbrace T_i \rbrace$ that are at most equal to $x$.\\ \vspace{0.1cm}
$\mathbb{P} ( A )$ will denote the probability (or measure) of the statement (or set) $A$. We stress that $\mathbb{P}$ will always refer to the "natural" probability measure on the space that $A$ belongs to so that different instances of "$\mathbb{P}$" may refer to different measures. \\ \vspace{0.1cm}
$\lbrace T_i \rbrace_{i \geqslant 1}$ is an increasing sequence of points in a homogeneous Poisson process of intensity $1/2$.\\ \vspace{0.1cm}
$\textbf{v}_i$ is a representative (modulo $\pm$) of the $i$'th shortest vector in the lattice $L$.\\ \vspace{0.1cm}
$\mathcal{V}_{i}$ is the volume of the $n$-dimensional sphere of radius equal to the length $\left| \textbf{v}_i \right|$ of the $i$'th shortest vector in $L$.\\ \vspace{0.1cm}
$X_n$ is the space of (Euclidean) unimodular lattices of dimension $n$, identified with the homogeneous space $\mathrm{SL}(n, \R) / \mathrm{SL}(n, \Z)$.\\ \vspace{0.1cm}
$\lbrace \Phi_{ij} : 1 \leqslant i < j \leqslant N \rbrace$ is a collection of independent, positive Gaussians.
\end{flushleft}
\section{Angles between Random Points on the $n$-Sphere}
Towards a proof of Theorem \ref{p1-maintheorem}, the question of the distribution of certain normalizations of the angles between $N = N(n)$ random points on $S^{n-1}$ seemed to be a fitting \textit{toy problem}, progress on which would hopefully serve as a decent measure of the chances of success in employing Södergren's method to prove an effective statement of the form (\ref{p1-content}). In connection with this study, we obtained the following result.
\begin{theorem}\label{p1-pointsonspheretheorem}
\textit{Let $N = o\left(n^{1/6}\right)$ be an integer. Let $K > 0$ be a real number. Let $\varepsilon > 0$ be any positive number, and assume that for $1 \leqslant i < j < \infty$ we have numbers $c_{ij}$ and $c_{ij}'$ such that for all $i < j$,}
\begin{align*}
-K \leqslant c_{ij} < c_{ij}' \leqslant K, \quad \quad \inf_{i < j} \hspace{0.05cm} \left\lbrace c_{ij}' - c_{ij} \right\rbrace \geqslant \varepsilon.
\end{align*}
\textit{Let $I(t_{ij})$ be short for $I\left(c_{ij} < t_{ij} \leqslant c_{ij}'\right)$ where $I$ denotes a characteristic function. Let}
\begin{align*}
G(N) &= \left( \frac{1}{\sqrt{2 \pi}} \right)^{{{N}\choose{2}}} \int_{-K}^K \cdots \int_{-K}^K \prod_{i,j = 1 \atop i < j}^N I(t_{ij}) e^{-t_{ij}^2 / 2} \hspace{0.1cm} d t_{(N-1)N} \cdots dt_{12},
\end{align*}
\textit{so that $G(N)$ is the probability that each Gaussian $X_{ij}$ in a collection $\lbrace X_{ij} : 1 \leqslant i < j \leqslant N \rbrace$ of independent standard Gaussians lies between $c_{ij}$ and $c_{ij}'$. Let}
\begin{align*}
P(N) &= \sigma_{n-1}^N \left( \left\lbrace (\textbf{u}_1, \ldots, \textbf{u}_N) \in (S^{n-1})^N : c_{ij} < \tilde{\alpha}_{ij}  \leqslant c_{ij}' \text{ for } 1 \leqslant i < j \leqslant N \right\rbrace \right)
\end{align*}
\textit{where } $\tilde{\alpha}_{ij} = \sqrt{n} \left( a(\textbf{u}_i, \textbf{u}_j) - \tfrac{\pi}{2} \right)$ \textit{and $\sigma_{n-1}^N$ denotes the $N$-fold product measure of the $(n-1)$-dimensional probability measure on $S^{n-1}$. Then as $n \longrightarrow \infty$, }
\begin{align*}
\frac{P(N)}{G(N)} =  1 + O_{K, \varepsilon} \left( \frac{N^3}{\sqrt{n}} \right). 
\end{align*}
\end{theorem}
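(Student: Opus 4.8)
The plan is to reduce the computation of $P(N)$ to an explicit $\binom{N}{2}$-dimensional integral via the joint density of the pairwise inner products $\gamma_{ij} := \textbf{u}_i \cdot \textbf{u}_j$ (for $1 \leqslant i < j \leqslant N$), and then to Taylor-expand that integrand around the Gaussian density $e^{-\frac12 \sum t_{ij}^2}$; this proceeds in the spirit of Södergren's proof of \cite[Thm. 3.2]{soda10}, made effective. First I would record the joint density of $(\gamma_{ij})_{i<j}$ for $N$ independent uniform points on $S^{n-1}$: conditioning on $\textbf{u}_1, \dots, \textbf{u}_k$ and splitting $\textbf{u}_{k+1}$ into its component $\textbf{p}$ in $V_k := \mathrm{span}(\textbf{u}_1, \dots, \textbf{u}_k)$ and its orthogonal part, the in-plane component $\textbf{p}$ has density proportional to $(1 - |\textbf{p}|^2)^{(n-k-2)/2}$ on the unit ball of $V_k$; a linear change of variables contributes a Jacobian $(\det G_k)^{-1/2}$ (here $G_k$ is the Gram matrix of $\textbf{u}_1, \dots, \textbf{u}_k$), and the Schur-complement identity $1 - |\textbf{p}|^2 = \det G_{k+1} / \det G_k$ makes the determinant powers telescope over $k = 1, \dots, N-1$. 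The conclusion is that $(\gamma_{ij})_{i<j}$ has density
\begin{align*}
C_{n,N} \, (\det G)^{(n-N-1)/2}, \qquad C_{n,N} = \frac{\Gamma(n/2)^{N-1}}{\pi^{\binom{N}{2}/2} \, \prod_{k=1}^{N-1} \Gamma((n-k)/2)},
\end{align*}
supported where the matrix $G = (g_{ij})$ with $g_{ii} = 1$ and $g_{ij} = \gamma_{ij}$ is positive definite.

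Next I would substitute $\gamma_{ij} = \cos\bigl(\tfrac{\pi}{2} + t_{ij}/\sqrt n\bigr) = -\sin(t_{ij}/\sqrt n)$, which carries the event $\{c_{ij} < \tilde{\alpha}_{ij} \leqslant c_{ij}'\}$ \emph{exactly} onto the box $B := \{ (t_{ij}) : c_{ij} < t_{ij} \leqslant c_{ij}', \ 1 \leqslant i < j \leqslant N \}$, at the cost of a Jacobian $\prod_{i<j} n^{-1/2} \cos(t_{ij}/\sqrt n)$. This gives
\begin{align*}
\frac{P(N)}{G(N)} = \Bigl( C_{n,N} \, (2\pi/n)^{\binom{N}{2}/2} \Bigr) \cdot \frac{\displaystyle \int_B (\det G(t))^{\frac{n-N-1}{2}} \prod_{i<j} \cos(t_{ij}/\sqrt n) \, dt}{\displaystyle \int_B e^{-\frac12 \sum_{i<j} t_{ij}^2} \, dt},
\end{align*}
where $G(t)$ has unit diagonal and off-diagonal entries $-\sin(t_{ij}/\sqrt n)$. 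For the prefactor, Stirling's formula gives $\Gamma(n/2) / \Gamma((n-k)/2) = (n/2)^{k/2} e^{O(k^2/n)}$, and since $\binom{N}{2} = \sum_{k=1}^{N-1} k$ it follows that $C_{n,N} \, (2\pi/n)^{\binom{N}{2}/2} = \prod_{k=1}^{N-1} e^{O(k^2/n)} = 1 + O(N^3/n)$.

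The heart of the matter is a \emph{uniform, multiplicative} comparison of the two integrands over the cube $|t_{ij}| \leqslant K$. Writing $G(t) = I + A$ with $A_{ii} = 0$ and $A_{ij} = -\sin(t_{ij}/\sqrt n)$, one has $\|A\|_{\mathrm{op}} \leqslant \|A\|_F \leqslant K \sqrt{2 \binom{N}{2} / n} \longrightarrow 0$, so $\log \det(I + A) = \sum_{m \geqslant 2} \tfrac{(-1)^{m+1}}{m} \mathrm{tr}(A^m)$. Since $\mathrm{tr}(A^2) = \tfrac2n \sum_{i<j} t_{ij}^2 + O_K(N^2/n^2)$, the $m = 2$ term of $\tfrac{n-N-1}{2} \log\det(I+A)$ produces the Gaussian exponent $-\tfrac12 \sum_{i<j} t_{ij}^2$ together with an error $O_K(N^3/n)$; the $m = 3$ term equals $\tfrac{n-N-1}{6} \mathrm{tr}(A^3)$, and the crude bound $|\mathrm{tr}(A^m)| \leqslant N^m (K/\sqrt n)^m$ shows it to be $O_K(N^3/\sqrt n)$, the dominant error; the tail $\sum_{m \geqslant 4}$ contributes $O_K(N^4/n) = o_K(N^3/\sqrt n)$; and $\sum_{i<j} \log\cos(t_{ij}/\sqrt n) = O_K(N^2/n)$. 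Summing these logarithms and exponentiating -- and it is precisely here that the hypothesis $N = o(n^{1/6})$, equivalently $N^3/\sqrt n \to 0$, is needed, in order to replace $e^{O_K(N^3/\sqrt n)}$ by $1 + O_K(N^3/\sqrt n)$ -- gives
\begin{align*}
(\det G(t))^{\frac{n-N-1}{2}} \prod_{i<j} \cos(t_{ij}/\sqrt n) = e^{-\frac12 \sum_{i<j} t_{ij}^2} \bigl( 1 + O_K(N^3/\sqrt n) \bigr)
\end{align*}
uniformly over $t$ with $|t_{ij}| \leqslant K$. Integrating over the fixed box $B$ and combining with the prefactor estimate yields $P(N)/G(N) = 1 + O_{K,\varepsilon}(N^3/\sqrt n)$; the lower bound $\inf_{i<j}(c_{ij}' - c_{ij}) \geqslant \varepsilon$ enters only to guarantee $G(N) > 0$, so that the ratio is well defined.

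The main obstacle is the comparison in the previous paragraph: it must be done multiplicatively and uniformly on the whole box, not as an estimate of the difference $P(N) - G(N)$, since $G(N)$ may be as small as $c(K,\varepsilon)^{\binom{N}{2}}$ and a merely additive $o(1)$ bound would be worthless; and one has to check that the cubic trace term $\tfrac n6 \mathrm{tr}(A^3)$ really is the bottleneck -- it is of exact order $N^3/\sqrt n$ for generic boxes (e.g. all $t_{ij} = K$) -- which is what forces the growth restriction $N = o(n^{1/6})$.
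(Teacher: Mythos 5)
Your argument is correct, and it takes a genuinely different route from the paper's. The paper parameterizes $(\textbf{u}_1,\ldots,\textbf{u}_N)$ by iterated spherical coordinates $\phi_{ij}$, which coincide with the pairwise angles $\alpha_{ij}=a(\textbf{u}_i,\textbf{u}_j)$ only when $i=1$. This mismatch forces the paper (Lemma~\ref{p1-lemma2.4} and the remarks following it) to compare the indicator $I(\tilde{\alpha}_{ij})$ against $I(t_{ij})$ via auxiliary dominating functions $d_{ij}$ supported on sets of measure $O_K(N/\sqrt{n})$, and to divide by a uniform lower bound $\xi(K,\varepsilon)$ on the per-coordinate Gaussian mass; this is exactly where $\varepsilon$ enters the implicit constant. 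You instead work with the joint density $C_{n,N}(\det G)^{(n-N-1)/2}$ of the Gram entries $\gamma_{ij}=\textbf{u}_i\cdot\textbf{u}_j$, obtained by the telescoping Schur-complement argument you sketch (which is correct: the intermediate determinants $\det G_k$, $2\leqslant k\leqslant N-1$, appear with exponent zero). Since $\gamma_{ij}=\cos a(\textbf{u}_i,\textbf{u}_j)=-\sin(\tilde{\alpha}_{ij}/\sqrt{n})$, your substitution $\gamma_{ij}=-\sin(t_{ij}/\sqrt{n})$ makes $t_{ij}$ \emph{identically} equal to $\tilde{\alpha}_{ij}$, so the event is exactly the box and the indicator comparison disappears. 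Your integrand comparison via $\log\det(I+A)=\sum_{m\geqslant 2}\tfrac{(-1)^{m+1}}{m}\mathrm{tr}(A^m)$ is also sound: $\mathrm{tr}(A)=0$, the $m=2$ term gives the Gaussian exponent with error $O_K(N^3/n)$, the $m=3$ term is indeed the bottleneck at $O_K(N^3/\sqrt{n})$ by the crude bound $|\mathrm{tr}(A^m)|\leqslant N^m(K/\sqrt{n})^m$, and the tail $m\geqslant 4$ is $O_K(N^4/n)=o_K(N^3/\sqrt{n})$; the series converges since $\|A\|_{\mathrm{op}}\leqslant\|A\|_F\to 0$ for $N=o(n^{1/6})$. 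Your route replaces Lemmas~\ref{p1-lemma2.2} and~\ref{p1-lemma2.4} and the $\xi(K,\varepsilon)$-argument by one matrix identity, and it actually yields the slightly stronger bound $O_K(N^3/\sqrt{n})$ with no $\varepsilon$-dependence in the constant -- as you correctly note, $\varepsilon$ is then needed only to keep $G(N)>0$. The one thing the paper's approach buys in exchange is that its spherical-coordinate machinery (Lemmas~\ref{p1-lemma2.3}, \ref{p1-lemma2.4}, \ref{p1-lemma2.5}) is reused verbatim in the proof of the main technical result, Theorem~\ref{p1-maintechnical}, where the change of variables occurs inside Rogers' formula; your Gram-density shortcut would need to be re-fitted to that setting separately.
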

To prove Theorem \ref{p1-pointsonspheretheorem}, we will need several lemmas giving effective asymptotic statements about the individual factors in an explicit expression for the probability $P(N)$.
\begin{lemma}\label{p1-lemma2.2}
\textit{Let $N = o \lp n^{1/3} \rp$ be an integer. For $1 \leqslant i < j \leqslant N$, let $t_{ij} \in \lb -K-1, K+1 \rb$ where $K > 0$. Then, as $n \longrightarrow \infty$, we have}
\begin{align}\label{p1-lemma}
\prod_{i,j=1 \atop i < j}^{N} \cos \left( \frac{t_{ij}}{\sqrt{n}} \right)^{n-i-1} = \left( 1 + O_K \left( \frac{N^3}{n} \right) \right) \prod_{i,j = 1 \atop i<j}^{N} e^{-t_{ij}^2/2}.
\end{align}
\end{lemma}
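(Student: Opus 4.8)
The plan is to take logarithms and reduce the identity to an estimate on a single sum of $\binom{N}{2}$ terms. Since $|t_{ij}|\le K+1$ and $N=o(n^{1/3})$, for all sufficiently large $n$ we have $|t_{ij}/\sqrt n|\le (K+1)/\sqrt n<\tfrac{\pi}{2}$, so each factor $\cos(t_{ij}/\sqrt n)$ is positive and $\log\cos(t_{ij}/\sqrt n)$ is well defined; moreover $1\le i<j\le N$ forces $0<n-i-1\le n$. From the Taylor expansion $\log\cos x=-\tfrac{x^2}{2}+O(x^4)$, valid uniformly for $|x|\le 1$, applied with $x=t_{ij}/\sqrt n$, I get
\[
\log\cos\!\left(\frac{t_{ij}}{\sqrt n}\right)=-\frac{t_{ij}^2}{2n}+O_K\!\left(\frac{1}{n^2}\right).
\]

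Next I would multiply through by the exponent $n-i-1$ and use $0<n-i-1\le n$ together with $i+1\le N+1$ and $|t_{ij}|\le K+1$:
\[
(n-i-1)\log\cos\!\left(\frac{t_{ij}}{\sqrt n}\right)=-\frac{t_{ij}^2}{2}+\frac{(i+1)t_{ij}^2}{2n}+O_K\!\left(\frac1n\right)=-\frac{t_{ij}^2}{2}+O_K\!\left(\frac{N}{n}\right).
\]
Summing over all $\binom N2$ admissible pairs, for which there are $\asymp N^2$ terms, gives
\[
\sum_{i<j}(n-i-1)\log\cos\!\left(\frac{t_{ij}}{\sqrt n}\right)=-\sum_{i<j}\frac{t_{ij}^2}{2}+O_K\!\left(\frac{N^3}{n}\right).
\]
Exponentiating, and using that $N^3/n\to 0$ under the hypothesis $N=o(n^{1/3})$, so that $e^{O_K(N^3/n)}=1+O_K(N^3/n)$, and that $\exp\!\big(-\tfrac12\sum_{i<j}t_{ij}^2\big)=\prod_{i<j}e^{-t_{ij}^2/2}$, yields
\[
\prod_{i<j}\cos\!\left(\frac{t_{ij}}{\sqrt n}\right)^{n-i-1}=\left(1+O_K\!\left(\frac{N^3}{n}\right)\right)\prod_{i<j}e^{-t_{ij}^2/2},
\]
which is the assertion of the lemma.

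The computation is short, so the only real point requiring care is the bookkeeping of the two distinct sources of error. One must keep the $O_K(1/n^2)$ coming from the higher-order terms of $\log\cos$ (which, after multiplication by $n-i-1\le n$, contributes only $O_K(1/n)$ per pair) separate from the genuinely index-dependent term $(i+1)t_{ij}^2/(2n)$, whose size is $O_K(N/n)$ per pair. It is precisely the accumulation of this per-pair error of order $N/n$ over the $\asymp N^2$ admissible pairs that produces the total $O_K(N^3/n)$, and hence that pins the growth restriction to $N=o(n^{1/3})$ in order for the relative error to vanish; no genuine obstacle arises beyond this accounting.
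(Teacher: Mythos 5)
Your argument is correct and follows essentially the same route as the paper: take logarithms, apply the Taylor expansion $\log\cos x = -\tfrac{x^2}{2}+O(x^4)$, multiply by the exponent $n-i-1$, sum over the $\binom{N}{2}$ admissible pairs, and exponentiate using $N^3/n\to 0$. The only cosmetic difference is that you bound the index-dependent term by $i+1\leqslant N+1$ before summing over $\asymp N^2$ pairs, whereas the paper computes $\sum_{i<j}(i+1)=\sum_{i=1}^{N}(N-i)(i+1)=O(N^3)$ exactly; both give the same $O_K(N^3/n)$.
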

\begin{proof}
Let $n > 2(K+1)^2$. By elementary Taylor expansions, we see that as $n \longrightarrow \infty$,
\begin{align}\label{p1-logcos}
\log \lp \cos \frac{t_{ij}}{\sqrt{n}} \rp = - \frac{t_{ij}^2}{2n} + O_K \left( \frac{1}{n^2} \right).
\end{align}
Taking the logarithm of the left-hand side of (\ref{p1-lemma}) and applying (\ref{p1-logcos}), we find that
\begin{align*}
\log \prod_{i,j=1 \atop i < j}^{N} \cos \left( \frac{t_{ij}}{\sqrt{n}} \right)^{n-i-1} &= \sum_{i,j = 1 \atop i < j}^N (n-i-1) \cdot \log \lp \cos \frac{t_{ij}}{\sqrt{n}} \rp \\
&= \sum_{i,j = 1 \atop i < j}^N (n-i-1) \cdot \left( - \frac{t_{ij}^2}{2n} + O_K \left( \frac{1}{n^2} \right) \right) \\
&= \sum_{i,j = 1 \atop i < j}^N \left[ -\frac{t_{ij}^2}{2} + O_K \left( \frac{1}{n} \right) + (i+1) \cdot \left( \frac{t_{ij}^2}{2n} + O_K \left( \frac{1}{n^2} \right) \right) \right] \\
&=  - \hspm  \sum_{i,j = 1 \atop i<j}^N \frac{t_{ij}^2}{2} + {{N}\choose{2}} \cdot O_K \left( \frac{1}{n} \right)+ 
O_K \lp \frac{1}{n} \rp \sum_{i, j = 1 \atop i < j}^N \lp i + 1 \rp \\
&=  - \hspm \sum_{i,j = 1 \atop i<j}^N \frac{t_{ij}^2}{2} + O_K \left( \frac{N^2}{n} \right) + O_K \left( \frac{1}{n} \right) \cdot \sum_{i = 1}^N (N-i)(i+1).
\end{align*}
Since $\sum_{i = 1}^N (N-i)(i+1) = \tfrac{1}{6} (N^3 + 3N^2 - 4N) = O(N^3)$, we now have
\begin{align}\label{p1-partial}
\log \prod_{i,j=1 \atop i < j}^{N} \cos \left( \frac{t_{ij}}{\sqrt{n}} \right)^{n-i-1} = - \hspm \sum_{i,j = 1 \atop i<j}^N \frac{t_{ij}^2}{2} + O_K \left( \frac{N^3}{n} \right).
\end{align}
By exponentiating (\ref{p1-partial}), we finally obtain
\begin{align*}
\prod_{i,j = 1 \atop i < j}^N \cos \left( \frac{t_{ij}}{\sqrt{n}} \right)^{n-i-1} &= \left(1 + O_K \left(\frac{N^3}{n} \right) \right) \prod_{i,j = 1 \atop i < j}^N e^{-t_{ij}^2 / 2},
\end{align*}
which is (\ref{p1-lemma}). 
\end{proof}
\begin{lemma}\label{p1-lemma2.3}
\textit{Let $N = o \lp n^{1/3} \rp$ be an integer and let $\omega_k$ be the $(k-1)$-dimensional volume of $S^{k-1}$. Then }
\begin{align*}
\prod_{l = 1}^{N-1} \prod_{m = 1}^l \frac{\omega_{n-m}}{\omega_{n-m+1} \sqrt{n}} = \left( \frac{1}{\sqrt{2 \pi}} \right)^{{N}\choose{2}} \bigl( 1 + p \lp n, N \rp \bigr)
\end{align*}
for some function $p \lp n, N \rp = O \lp N^3 / n \rp$.
\end{lemma}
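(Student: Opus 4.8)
The plan is to collapse the inner product by telescoping, then apply Stirling's formula, and finally bound the error accumulated over the $\nCr{N}{2}$ factors. Using $\omega_k = 2\pi^{k/2}/\Gamma(k/2)$, the inner product telescopes: $\prod_{m=1}^l \frac{\omega_{n-m}}{\omega_{n-m+1}} = \frac{\omega_{n-l}}{\omega_n}$, so $\prod_{m=1}^l \frac{\omega_{n-m}}{\omega_{n-m+1}\sqrt n} = \frac{\omega_{n-l}}{\omega_n}\, n^{-l/2}$. Since $\sum_{l=1}^{N-1} l = \nCr{N}{2}$ and $\frac{\omega_{n-l}}{\omega_n} = \pi^{-l/2}\,\Gamma(n/2)/\Gamma\lp(n-l)/2\rp$, this already gives
\[
\prod_{l=1}^{N-1}\prod_{m=1}^l \frac{\omega_{n-m}}{\omega_{n-m+1}\sqrt n} = (\pi n)^{-\nCr{N}{2}/2}\prod_{l=1}^{N-1}\frac{\Gamma(n/2)}{\Gamma\lp(n-l)/2\rp},
\]
so the task is reduced to the asymptotics of this product of Gamma ratios.

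Next I would insert Stirling's formula $\log\Gamma(x) = x\log x - x - \tfrac12\log x + \tfrac12\log(2\pi) + O(1/x)$ into $\log\Gamma(n/2) - \log\Gamma\lp(n-l)/2\rp$. Writing $\log\tfrac{n-l}{2} = \log\tfrac n2 + \log(1-l/n)$ and expanding $\log(1-l/n) = -l/n + O(l^2/n^2)$, the $O(l)$-sized pieces (the ``$-x$'' term of Stirling against the leading term of $-\tfrac{n-l}{2}\log(1-l/n)$) cancel, leaving
\[
\log\frac{\Gamma(n/2)}{\Gamma\lp(n-l)/2\rp} = \frac{l}{2}\log\frac{n}{2} + O\!\lp\frac{l^2}{n}\rp
\]
uniformly for $1 \leqslant l \leqslant N-1$; the hypothesis $N = o\lp n^{1/3}\rp$ is exactly what makes $l/n \to 0$, so that all truncations are legitimate. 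Summing over $l$ and using $\sum_{l=1}^{N-1} l = \nCr{N}{2}$ together with $\sum_{l=1}^{N-1} l^2 = O(N^3)$ gives $\sum_{l=1}^{N-1}\log\frac{\Gamma(n/2)}{\Gamma((n-l)/2)} = \nCr{N}{2}\cdot\tfrac12\log\tfrac n2 + O(N^3/n)$.

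Finally I exponentiate. This step is valid precisely because $N = o\lp n^{1/3}\rp$ forces $N^3/n \to 0$, whence $e^{O(N^3/n)} = 1 + O(N^3/n)$; it gives $\prod_{l=1}^{N-1}\Gamma(n/2)/\Gamma\lp(n-l)/2\rp = (n/2)^{\nCr{N}{2}/2}\lp 1 + O(N^3/n)\rp$. Substituting this back into the reduction from the first paragraph, the powers of $n$ cancel and the surviving constants combine to $(2\pi)^{-\nCr{N}{2}/2} = \lp 1/\sqrt{2\pi}\rp^{\nCr{N}{2}}$, which is the claimed identity with $p(n,N) = O(N^3/n)$. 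The one thing to be careful about is the bookkeeping of errors — verifying that the $O(l^2/n)$ terms sum to $O(N^3/n)$ and no larger, and that the leading constant is genuinely $1/\sqrt{2\pi}$; both come down to the elementary sums $\sum l$ and $\sum l^2$ and the cancellation noted above. (One can equally well skip the telescoping and estimate each factor directly as $\frac{\omega_{n-m}}{\omega_{n-m+1}\sqrt n} = \frac{1}{\sqrt{2\pi}}\lp 1 + O(m/n)\rp$ via $\Gamma(z+\tfrac12)/\Gamma(z) = z^{1/2}(1 + O(1/z))$, then use $\sum_{l=1}^{N-1}\sum_{m=1}^l O(m/n) = O(N^3/n)$; the conclusion is the same.)
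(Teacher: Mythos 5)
Your proof is correct. Your primary route is genuinely different from the paper's: you telescope the inner product to $\omega_{n-l}/\omega_n = \pi^{-l/2}\,\Gamma(n/2)/\Gamma\lp(n-l)/2\rp$ and then apply Stirling to the \emph{log} of each Gamma-ratio, accumulating an additive error $O(l^2/n)$ for each $l$ which sums to $O(N^3/n)$ directly; the paper instead keeps the double product, estimates each factor individually as $\tfrac{1}{\sqrt{2\pi}} + O(m/n)$, coarsens $O(m/n)$ to $O(N/n)$, and only then writes $\lp 1 + O(N/n)\rp^{\nCr{N}{2}} = 1 + O(N^3/n)$. Your telescoping version has the minor advantage of cleaner bookkeeping — the $O(N^3/n)$ arises from the honest sum $\sum_{l<N} l^2$ rather than from a uniform-bound-then-exponentiate step — and it isolates the entire power of $n$ in one place, which makes the cancellation against $n^{-\nCr{N}{2}/2}$ transparent. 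The parenthetical alternative you give at the end, with $\Gamma(z+\tfrac12)/\Gamma(z) = z^{1/2}(1+O(1/z))$ applied termwise and the error tracked via $\sum_l \sum_{m\leqslant l} O(m/n)$, is essentially the paper's argument; both approaches use $N = o(n^{1/3})$ only in the final exponentiation to ensure $N^3/n \to 0$.
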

\begin{proof}
Stirling's formula gives
\begin{align*}
\frac{\omega_k}{\omega_{k+1}} &= \frac{1}{\sqrt{\pi}} \cdot \Gamma \left(\frac{k+1}{2} \right) \cdot \Gamma \left( \frac{k}{2} \right)^{-1} = \frac{1}{\sqrt{2\pi e}} \cdot \sqrt{k} \cdot \left( 1 + \frac{1}{k} \right)^{k/2} \cdot \left(1 + O\left( \frac{1}{k} \right) \right).
\end{align*}
Using the fact that $(1+1/k)^k = e + O(k^{-1})$ and the estimate
\begin{align*}
\log \lp \lp e + O \lp \frac{1}{k} \rp \rp^{1/2} \rp = \frac{1}{2} \lb 1 + \log \lp 1 + O \lp \frac{1}{k} \rp \rp \rb = \frac{1}{2} + O \lp \frac{1}{k} \rp,
\end{align*}
we obtain that
\begin{align}\label{p1-quotient}
\frac{\omega_k}{\omega_{k+1}} = \frac{1}{\sqrt{2\pi e}}  \cdot \sqrt{k} \cdot \left(e + O\left( \frac{1}{k} \right) \right)^{1/2} \cdot \left(1 + O\left( \frac{1}{k} \right) \right) = \frac{1}{\sqrt{2\pi}}  \cdot \sqrt{k} \cdot \left(1 + O\left( \frac{1}{k} \right) \right). 
\end{align}
Applying (\ref{p1-quotient}) with $k = n-m$, we now get
\begin{align*}
\frac{1}{\sqrt{n}} \cdot \frac{\omega_{n-m}}{\omega_{n-m + 1}} = \frac{1}{\sqrt{2 \pi}} \cdot \left( 1 + O \left( \frac{m}{n} \right) \right) \cdot \left(1 + O \left( \frac{1}{n-m} \right) \right) = \frac{1}{\sqrt{2 \pi}} + O \left( \frac{m}{n} \right).
\end{align*}
Finally, since $m < N$, we have
\begin{align*}
\prod_{l = 1}^{N-1} \prod_{m = 1}^l \frac{\omega_{n-m}}{\omega_{n-m+1} \sqrt{n}} &=  \prod_{l = 1}^{N-1} \prod_{m = 1}^l \left( \frac{1}{\sqrt{2 \pi}} + O \left( \frac{m}{n} \right) \right) \\
&= \left( \frac{1}{\sqrt{2 \pi}} \right)^{{N}\choose{2}} \cdot \left( 1 + O \left( \frac{N}{n} \right) \right)^{{N}\choose{2}}\\
&= \left( \frac{1}{\sqrt{2 \pi}} \right)^{{N}\choose{2}} \cdot \exp \left( {{N}\choose{2}} \cdot \log \left( 1 + O \left( \frac{N}{n} \right) \right) \right)\\
&= \left( \frac{1}{\sqrt{2 \pi}} \right)^{{N}\choose{2}} \cdot  \left( 1 + O \left(\frac{N^3}{n} \right) \right).
\end{align*}
With 
\begin{align*}
p (n, N) = -1 + \bigl( 2 \pi \bigr)^{\frac{1}{2}\nCr{N}{2}} \prod_{l = 1}^{N-1} \prod_{m = 1}^l \frac{\omega_{n-m}}{\omega_{n-m+1} \sqrt{n}},
\end{align*}
this proves the desired estimate.
\end{proof}
Anticipating the proof of Theorem \ref{p1-pointsonspheretheorem}, our strategy (which is based on \cite{soda10}) will be to use the spherical symmetry of the set
\begin{align*}
\left\lbrace (\textbf{u}_1, \ldots, \textbf{u}_N) \in \lp S^{n-1} \rp^N : c_{ij} < \tilde{\alpha}_{ij}  \leqslant c_{ij}' \text{ for } 1 \leqslant i < j \leqslant N \right\rbrace
\end{align*}
to compute the probability $P(N)$. Concretely, by employing a change of variables in an integral expression for $P(N)$, we will only need to look at a special case where the vectors $\textbf{u}_1, \ldots, \textbf{u}_N$ have a particularly simple form. Before we begin the proof of Theorem \ref{p1-pointsonspheretheorem}, we will therefore analyse this special case.
\begin{lemma}\label{p1-lemma2.4}
\textit{Let $N = o \lp n^{1/3} \rp$ be an integer. Consider the unit vectors}
\begin{align*}
\textbf{u}_1 &= (1, 0 , \ldots, 0),\\
\textbf{u}_ 2 &= (\cos \phi_{12}, \sin \phi_{12}, 0, \ldots, 0), \\
\textbf{u}_3 &= (\cos \phi_{13}, \sin \phi_{13} \cos \phi_{23}, \sin \phi_{13} \sin \phi_{23}, 0, \ldots, 0), \\
&\hspace{0.15cm} \vdots \\
\textbf{u}_N &= (\cos \phi_{1N}, \sin \phi_{1N} \cos \phi_{2N}, \ldots, \sin \phi_{1N} \cdots \sin \phi_{(N-1)N}, 0 ,\ldots, 0),
\end{align*}
\textit{where $\phi_{ij} \in \lb 0, \pi \rb$ for all admissible $(i,j)$, and let $\alpha_{ij} = \arccos ( \textbf{u}_i \cdot \textbf{u}_j)$ and $\tilde{\alpha}_{ij} = \sqrt{n} \lp \alpha_{ij} - \tfrac{\pi}{2} \rp$. Let $K > 0$, let $t_{ij} = \sqrt{n}\lp \phi_{ij} - \tfrac{\pi}{2} \rp$, and suppose that for all admissible $(i,j)$, $t_{ij} \in \lb -K-1 , K+1 \rb$.} Then, for any admissible $(i,j)$ there is a function $g_{ij} \lp \tilde{\alpha}_{12}, \ldots, \tilde{\alpha}_{N-1,N} \rp$, which depends on $n$ and $N$ and satisfies $g_{ij} = O_K (N / \sqrt{n})$, with 
\begin{align}\label{p1-dotprod}
t_{ij} = \tilde{\alpha}_{ij} + g_{ij} \lp \tilde{\alpha}_{12}, \ldots, \tilde{\alpha}_{N-1,N} \rp.
\end{align}
\end{lemma}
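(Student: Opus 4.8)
The plan is to write the Gram entry $\cos\alpha_{ij} = \textbf{u}_i\cdot\textbf{u}_j$ explicitly in the given coordinates, use that every $\phi_{ab}$ lies within $(K+1)/\sqrt n$ of $\tfrac{\pi}{2}$, and linearize. Decomposing $\textbf{u}_i = (\cos\phi_{1i},\, \sin\phi_{1i}\,\textbf{u}_i')$ with $\textbf{u}_i'$ the analogous vector built from $\phi_{2i},\phi_{3i},\ldots$ (and similarly for $\textbf{u}_j$), one gets the recursion $\cos\alpha_{ij} = \cos\phi_{1i}\cos\phi_{1j} + \sin\phi_{1i}\sin\phi_{1j}\cos\alpha_{ij}'$ with $\alpha_{ij}'$ the angle between $\textbf{u}_i'$ and $\textbf{u}_j'$. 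Since $i<j$, iterating $i-1$ times collapses $\textbf{u}_i$ to $(1,0,\ldots,0)$ and exposes the first coordinate $\cos\phi_{ij}$ of the reduced $\textbf{u}_j$, yielding
\begin{align*}
\cos\alpha_{ij} = \sum_{l=1}^{i-1}\left(\prod_{k=1}^{l-1}\sin\phi_{ki}\sin\phi_{kj}\right)\cos\phi_{li}\cos\phi_{lj} \;+\; \left(\prod_{k=1}^{i-1}\sin\phi_{ki}\sin\phi_{kj}\right)\cos\phi_{ij}.
\end{align*}

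I would then substitute $\phi_{ab} = \tfrac{\pi}{2} + t_{ab}/\sqrt n$, so that $\cos\phi_{ab} = -\sin(t_{ab}/\sqrt n) = -t_{ab}/\sqrt n + O_K(n^{-3/2})$ and $\sin\phi_{ab} = 1 + O_K(n^{-1})$. Each product $\prod_k \sin\phi_{ki}\sin\phi_{kj}$ has at most $2(N-1)$ factors of the form $1+O_K(n^{-1})$, hence equals $1 + O_K(N/n)$ since $N/n\to 0$; each of the $i-1$ summands is therefore $t_{li}t_{lj}/n + O_K(N/n^2)$, so their sum is $O_K(N/n)$, and the last term is $-t_{ij}/\sqrt n + O_K(N n^{-3/2})$. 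Collecting, $\cos\alpha_{ij} = -t_{ij}/\sqrt n + O_K(N/n)$. In particular $|\cos\alpha_{ij}|\leqslant (K+2)/\sqrt n$ for large $n$ (using $N = o(n^{1/3})$, so $N/n$ is negligible against $n^{-1/2}$), which forces $\alpha_{ij} = \tfrac{\pi}{2}+O_K(n^{-1/2})$ and hence a bound $|\tilde\alpha_{ij}|\leqslant C_K$. Expanding $\cos\alpha_{ij} = -\sin(\tilde\alpha_{ij}/\sqrt n) = -\tilde\alpha_{ij}/\sqrt n + O_K(n^{-3/2})$ and comparing the two expressions gives $t_{ij} = \tilde\alpha_{ij} + O_K(N/\sqrt n)$.

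Finally I would confirm that $g_{ij} := t_{ij} - \tilde\alpha_{ij}$ is genuinely a function of $(\tilde\alpha_{12},\ldots,\tilde\alpha_{N-1,N})$. The coordinate map $(\phi_{ab})\mapsto(\alpha_{ab})$ is invertible near $\phi_{ab}=\tfrac{\pi}{2}$: the recursion gives $\phi_{1j}=\alpha_{1j}$, and once all $\phi_{k\ell}$ with $k<i$ are known it determines $\cos\phi_{ij}$ rationally in terms of the $\cos\alpha_{k\ell}$ and the already-known $\sin\phi_{k\ell}$ (the denominators $\sin\phi_{ki}\sin\phi_{kj}$ staying bounded away from $0$), after which $\phi_{ij}\in[0,\pi]$ is recovered by $\arccos$; equivalently, the flag-form tuple $\textbf{u}_1,\ldots,\textbf{u}_N$ is the unique one with the given Gram matrix, in the manner of a Cholesky factorization. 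Thus each $t_{ij}$, and hence $g_{ij}$, is a function of the $\alpha_{k\ell}=\tfrac{\pi}{2}+\tilde\alpha_{k\ell}/\sqrt n$, i.e. of the $\tilde\alpha_{k\ell}$; it depends on $n$ and $N$ through these relations and is $O_K(N/\sqrt n)$ uniformly by the second paragraph.

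The step I expect to be the main obstacle is the bookkeeping in the middle paragraph: one must verify that multiplying the $\Theta(N)$ near-unit factors $\sin\phi_{ki}\sin\phi_{kj}$ only inflates the error to $O_K(N/n)$ rather than something larger, and — more conceptually — one must establish the a priori bound $|\tilde\alpha_{ij}|\leqslant C_K$ before Taylor-expanding $\sin(\tilde\alpha_{ij}/\sqrt n)$, since the conclusion concerns the function $g_{ij}$ of the $\tilde\alpha$'s and not merely a two-sided asymptotic in $t_{ij}$.
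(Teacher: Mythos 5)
Your proof is correct and essentially follows the paper's argument: the explicit Gram formula you derive by recursion is exactly the paper's equation for $\textbf{u}_i\cdot\textbf{u}_j$, the linearization and $O_K(N/n)$ bookkeeping match, and the invertibility argument for expressing $g_{ij}$ as a function of the $\tilde{\alpha}$'s is the same induction. The only cosmetic difference is that the paper expands $\tilde\alpha_{ij} = \sqrt{n}\arcsin(\sin(t_{ij}/\sqrt n)+O_K(N/n))$ directly, while you first establish the a priori bound $|\tilde\alpha_{ij}|\leqslant C_K$ and then Taylor-expand $\cos\alpha_{ij}$ in $\tilde\alpha_{ij}$ — both are valid and lead to the same estimate.
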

\begin{proof} Let $n > 2(K+1)^2$. By a computation, we first observe that
\begin{align}\label{p1-thedotproductfromhell}
\textbf{u}_i \cdot \textbf{u}_j 
= 
\sum_{m=1}^{i-1} \sin \frac{t_{mi}}{\sqrt{n}} \cdot \sin \frac{t_{mj}}{\sqrt{n}} \cdot \prod_{k=1}^{m-1} \cos \frac{t_{ki}}{\sqrt{n}} \cdot \cos \frac{t_{kj}}{\sqrt{n}} - \sin \frac{t_{ij}}{\sqrt{n}} \cdot \prod_{k = 1}^{i-1} \cos \frac{t_{ki}}{\sqrt{n}} \cdot \cos \frac{t_{kj}}{\sqrt{n}}.
\end{align}
Then, by using the estimates $\sin x = x + O \lp x^3 \rp$ and $\cos x = 1 + O \lp x^2 \rp$, we find that for any admissible $(i,j)$,
\begin{align*}
\textbf{u}_i \cdot \textbf{u}_j &= - \sin \frac{t_{ij}}{\sqrt{n}} \cdot \prod_{k = 1}^{i-1} \left(1 + O_K \left( \frac{1}{n} \right) \right) + \sum_{m=1}^{i-1} \left( \frac{t_{mi}}{\sqrt{n}} + O_K \left( n^{-3/2} \right) \right) \left( \frac{t_{mj}}{\sqrt{n}} + O_K \left( n^{-3/2} \right) \right) \\ &\quad \quad \times \prod_{k=1}^{m-1}  \left(1 + O_K \left( \frac{1}{n} \right) \right)
\\  &=  -  \sin \frac{t_{ij}}{\sqrt{n}} \cdot  \left(1 + O_K \left( \frac{1}{n} \right) \right)^{i-1} +O_K \left( \frac{1}{n} \right) \sum_{m=1}^{i-1}  \left(1 + O_K \left( \frac{1}{n} \right) \right)^{m-1}
\\  &= -  \sin \frac{t_{ij}}{\sqrt{n}} \cdot  \left(1 + O_K \left( \frac{1}{n} \right) \right)^{i-1} + (i-1) \cdot O_K \left( \frac{1}{n} \right) \cdot  \max \left\lbrace 1, \left(1 + O_K \left( \frac{1}{n} \right) \right)^{i-1} \right\rbrace .
\end{align*}
By Taylor expansions, we have
\begin{align*}
 \left(1 + O_K \left( \frac{1}{n} \right) \right)^{i-1} &= \exp \left( (i-1) \cdot \log  \left(1 + O_K \left( \frac{1}{n} \right) \right) \right) = \exp \left( O_K \left( \frac{i-1}{n} \right) \right) = 1 + O_K \left( \frac{i}{n} \right).
\end{align*}
Let us assume that the constant implied by this notation is positive, as the alternative would lead us to an entirely analogous computation with an identical result. Then, by the previous computation we obtain
\begin{align}\label{p1-lolol}
\nonumber \textbf{u}_i \cdot \textbf{u}_j &= -  \sin \frac{t_{ij}}{\sqrt{n}} \cdot \left(1 + O_K \left( \frac{i}{n} \right) \right) + O_K \left( \frac{i}{n} \right) \cdot \left( 1 + O_K \left( \frac{i}{n} \right) \right)\\
\nonumber &= -  \sin \frac{t_{ij}}{\sqrt{n}} \cdot \left(1 + O_K \left( \frac{N }{n} \right)  \right) + O_K \left( \frac{1}{n} \right) \cdot \left( N + O_K \left( \frac{N^2}{n} \right) \right)  \\
&= - \sin \frac{t_{ij}}{\sqrt{n}} + O_K \left( \frac{N}{n} \right).
\end{align}
Now, by the definition of $\alpha_{ij}$ and the identity $\arccos (x) = \tfrac{\pi}{2} + \arcsin(-x)$, we have
\begin{align*}
\alpha_{ij} &= \arccos \left(  - \sin \frac{t_{ij}}{\sqrt{n}} + O_K \left( \frac{N}{n} \right) \right) = \arcsin \left(  \sin \frac{t_{ij}}{\sqrt{n}} + O_K \left( \frac{N}{n} \right) \right) + \frac{\pi}{2}.
\end{align*}
Therefore, by the definition of $\tilde{\alpha}_{ij}$, we have proved
\begin{align*}
\tilde{\alpha}_{ij} &= \sqrt{n} \cdot \arcsin \left( \sin \frac{t_{ij}}{\sqrt{n}} + O_K \left(\frac{N}{n} \right) \right) = \sqrt{n} \cdot \arcsin \left( \frac{t_{ij}}{\sqrt{n}} + O_K \left(\frac{N}{n} \right) \right) = t_{ij} + O_K \left( \frac{N}{\sqrt{n}} \right),
\end{align*}
where we used the estimate $\arcsin (x) = x + O(x^3)$. \par 
It remains to prove that the difference $t_{ij} - \tilde{\alpha}_{ij}$ only depends on the variables $\tilde{\alpha}_{12}, \ldots,$ $\tilde{\alpha}_{N-1,N}$. To see this, we use induction on $i$. For the base case $i = 1$, we have $t_{ij} = \tilde{\alpha}_{ij}$ so that $g_{ij}$ is identically $0$. Let us therefore assume that $i \geqslant 2$. It follows from (\ref{p1-thedotproductfromhell}) that there are functions $A \lp t_{12}, t_{13}, \ldots, t_{i-1, j} \rp$ and $B \lp t_{12}, t_{13}, \ldots, t_{i-1,j} \rp \neq 0$, both depending on $n$, such that
\begin{align}\label{p1-AnBn}
-\sin \lp \frac{t_{ij}}{\sqrt{n}} \rp = \frac{\textbf{u}_i \cdot \textbf{u}_j - A \lp t_{12}, t_{13}, \ldots, t_{i-1, j} \rp}{B \lp t_{12}, t_{13}, \ldots, t_{i-1,j} \rp}.
\end{align}
By induction, each of the variables $t_{ab}$ ($1 \leqslant a \leqslant i-1$, $1 \leqslant b \leqslant j$, $a<b$) can be written as a function of the variables $\tilde{\alpha}_{cd}$ ($1 \leqslant c < d \leqslant N)$. Moreover, by definition of $\alpha_{ij}$, we have 
\begin{align*}
\textbf{u}_i \cdot \textbf{u}_j = \cos \lp \frac{\tilde{\alpha}_{ij}}{\sqrt{n}} + \frac{\pi}{2} \rp.
\end{align*}
Thus we see from (\ref{p1-AnBn}) that for some function $C$,
\begin{align*}
t_{ij} = C \lp \tilde{\alpha}_{12}, \tilde{\alpha}_{13}, \ldots, \tilde{\alpha}_{N-1,N} \rp.
\end{align*}
Finally, we take $g_{ij} = C - \tilde{\alpha}_{ij}$.
\end{proof}
\noindent \textsc{Remarks.}\par 
1) The proof of the lemma also shows that, if we have $\left| t_{ij} \right| > K+1$ for some admissible $(i,j)$, then for $n$ large enough, we even get $\left| \tilde{\alpha}_{ij} \right| > K$, cf. \cite[Sect. 3]{soda10}. 
\par 2) Let $I(\tilde{\alpha}_{ij})$ be short for $\smash{I \bigl( c_{ij} < \tilde{\alpha}_{ij} \leqslant c_{ij}'\bigr) }$, and let $I(t_{ij})$ be short for $\smash{I \bigl( c_{ij} < t_{ij} \leqslant c_{ij}'\bigr) }$. Then, if $t_{ij} \in \lb -K-1, K+1 \rb$ and $I(\tilde{\alpha}_{ij}) - I(t_{ij}) \neq 0$, we see that $t_{ij}$ belongs to a set $U_{ij}$ with $|U_{ij}| \leqslant C_K N/\sqrt{n} $ for some constant $C_K$ independent of $n$ and $N$. From this it follows that for $t_{ij} \in \lb -K-1, K+1\rb$, the difference of indicator functions $I(\tilde{\alpha}_{ij}) - I(t_{ij})$ is dominated in absolute value by the function
\begin{align*}
d_{ij}(t_{ij}) := 
\begin{cases}
1 \quad &\text{if $t_{ij} \in U_{ij}$,}\\
0 \quad &\text{otherwise.}
\end{cases}
\end{align*}
\par    
We are now ready to give a proof of Theorem \ref{p1-pointsonspheretheorem}.\\\\
\textit{Proof of Theorem \ref{p1-pointsonspheretheorem}.} 
We note for future use that, because of the conditions on the sequences $\lbrace c_{ij} \rbrace$ and $\lbrace c_{ij}' \rbrace$, we have
\begin{align}\label{p1-xi}
\int_{c_{ij}}^{c_{ij}'} e^{-t^2 /2} \hspace{0.1cm} dt \geqslant \int_{K-\varepsilon}^K e^{-t^2 / 2} \hspace{0.1cm} dt =: \xi(K, \varepsilon) > 0
\end{align}
for all admissible $(i,j)$. \par Let us write $d\textbf{t} = dt_{(N-1)N} \cdots dt_{12}$. We are studying the probability 
\begin{align*}
\mathbb{P} \bigl( c_{ij} < \tilde{\alpha}_{ij} \leqslant c_{ij}' : 1 \leqslant i < j \leqslant N \bigr),
\end{align*} 
which equals
\begin{align*}
P(N) = \left( \prod_{l = 1}^{N-1} \prod_{m=1}^l \frac{\omega_{n-m}}{\omega_{n-m+1} \sqrt{n}} \right) \int_{-\sqrt{n}\frac{\pi}{2}}^{\sqrt{n}\frac{\pi}{2}} \cdots \int_{-\sqrt{n}\frac{\pi}{2}}^{\sqrt{n}\frac{\pi}{2}}  \prod_{i,j = 1 \atop i < j}^N I(\tilde{\alpha}_{ij}) \cos \left( \frac{t_{ij}}{\sqrt{n}} \right)^{n-i-1} \hspace{0.1cm} d\textbf{t},
\end{align*}
cf. \cite[eq. (3.3)]{soda10}. Since the integrand vanishes if $\left| \tilde{\alpha}_{ij} \right| > K$ for some $(i,j)$, the first remark after Lemma \ref{p1-lemma2.4} implies that for $n$ large, we may restrict the domain of integration to the box $\smash{\lb - K - 1, K + 1 \rb^{\nCr{N}{2}}}$. Then, by Lemma \ref{p1-lemma2.2} and Lemma \ref{p1-lemma2.3},
\begin{align*}
P(N) &= \left( \frac{1}{\sqrt{2 \pi}} \right)^{{{N}\choose{2}}} \cdot \left( 1 + O_K \left( \frac{N^3}{n} \right) \right) \int_{-K-1}^{K+1} \cdots \int_{-K-1}^{K+1} \prod_{i,j = 1 \atop i < j}^N I(\tilde{\alpha}_{ij}) \cdot e^{-t_{ij}^2 / 2} \hspace{0.1cm} d \textbf{t}.
\end{align*}
\par 
Let $b(N) = \smash{{{N}\choose{2}}}$. We are now interested in obtaining an asymptotic expression for the quotient $P(N)/G(N)$. We have
\begin{align}\label{p1-step0}
\frac{P(N)}{G(N)} = \left( 1 + O_K \left( \frac{N^3}{n} \right) \right) Q(N)
\end{align}
with 
\begin{align*}
Q(N) = \int_{-K-1}^{K+1} \cdots \int_{-K-1}^{K+1} \prod_{i,j = 1 \atop i < j}^N I(\tilde{\alpha}_{ij}) \cdot e^{-t_{ij}^2 / 2} \hspace{0.1cm} d \textbf{t} \cdot \left( \int_{-K}^K \cdots \int_{-K}^K \prod_{i,j = 1 \atop i < j}^N I(t_{ij}) e^{-t_{ij}^2 / 2} \hspace{0.1cm} d \textbf{t} \right)^{-1}.
\end{align*}
\noindent We want to obtain upper and lower bounds on the quantity $Q(N)$ by using the bounds on $I(\tilde{\alpha}_{ij})$ given in the remarks following the proof of Lemma \ref{p1-lemma2.4}, namely
\begin{align*}
I(t_{ij}) - d_{ij}(t_{ij}) \leqslant I(\tilde{\alpha}_{ij}) \leqslant I(t_{ij}) + d_{ij}(t_{ij})
\end{align*}
for all admissible $(i,j)$. To this end, let $\mathcal{B}(N) = \big\{ (i,j) : 1 \leqslant i < j \leqslant N \big\}$. Then, by using (\ref{p1-xi}) and the definition of $d_{ij}$, we find that
\begin{align}\label{p1-Qlargerthan-new}
\nonumber Q(N) 
&\geqslant \prod_{i, j = 1 \atop i < j}^N \int_{-K-1}^{K+1} \left( I(t_{ij}) - d_{ij}(t_{ij}) \right) e^{-t_{ij}^2 / 2} \hsp dt_{ij} \cdot \left( \int_{-K}^K I(t_{ij})e^{-t_{ij}^2 / 2} \hsp dt_{ij} \right)^{-1}\\
\nonumber 
&= \prod_{i, j = 1 \atop i < j}^N \left( 1 - \int_{-K-1}^{K+1} d_{ij}(t_{ij})e^{-t_{ij}^2 / 2} \hsp d t_{ij} \cdot \left( \int_{-K}^K I(t_{ij})e^{-t_{ij}^2 / 2} \hsp dt_{ij} \right)^{-1} \right)
\\ &\geqslant \prod_{i, j = 1 \atop i < j}^N \left( 1 - \frac{4 C_K N}{\xi (K, \varepsilon) \sqrt{n}} \right) = 1 + \sum_{D \subset \mathcal{B}(N) \atop D \neq \varnothing} (-1)^{\# D} \left(\frac{4 C_K N}{\xi (K, \varepsilon) \sqrt{n} } \right)^{\# D}.
\end{align}
Similarly, we have
\begin{align}\label{p1-Qsmallerthan-new}
\nonumber Q(N) &\leqslant \prod_{i, j = 1 \atop i < j}^N \int_{-K-1}^{K+1} \left( I(t_{ij}) + d_{ij}(t_{ij}) \right) e^{-t_{ij}^2 / 2} \hsp dt_{ij} \cdot \left( \int_{-K}^K I(t_{ij})e^{-t_{ij}^2 / 2} \hsp dt_{ij} \right)^{-1} \\
\nonumber 
&= \prod_{i, j = 1 \atop i < j}^N \left( 1 + \int_{-K-1}^{K+1} d_{ij}(t_{ij}) e^{-t_{ij}^2 / 2} \hsp dt_{ij}  \cdot \left( \int_{-K}^K I(t_{ij})e^{-t_{ij}^2 / 2} \hsp dt_{ij} \right)^{-1} \right) 
\\ &\leqslant \prod_{i, j = 1 \atop i < j}^N \left( 1 + \frac{4 C_K N}{\xi (K, \varepsilon) \sqrt{n}} \right) = 1 + \sum_{D \subset \mathcal{B}(N) \atop D \neq \varnothing} \left( \frac{4 C_K N}{\xi (K, \varepsilon) \sqrt{n} } \right)^{\# D}.
\end{align}
If we let
\begin{align*}
R(N) := \sum_{D \subset \mathcal{B}(N) \atop D \neq \varnothing} \left( \frac{4 C_K N}{\xi (K, \varepsilon) \sqrt{n} } \right)^{\# D},
\end{align*}
then (\ref{p1-Qlargerthan-new}) and (\ref{p1-Qsmallerthan-new}) imply that for $n \gg 1$,
\begin{align}\label{p1-jakobssons}
1 - R(N) \leqslant Q(N) \leqslant 1 + R(N).
\end{align}
We now show that $R(N) \longrightarrow 0$ as $n \longrightarrow \infty$. Defining
\begin{align*}
S(N) = \frac{4 C_K N}{\xi (K, \varepsilon) \sqrt{n}},
\end{align*}
we find that
\begin{align}\label{p1-Rbound}
R(N) = \sum_{m = 1}^{b(N)} {{b(N)}\choose{m}} S(N)^m = -1 + \sum_{m = 0}^{b(N)} {{b(N)}\choose{m}} S(N)^m = -1 + (1 + S(N))^{b(N)}.
\end{align}
Therefore we need to show that $(1 + S(N))^{b(N)} \longrightarrow 1$ as $n \longrightarrow \infty$. However, since $N = o \lp n^{1/6} \rp$ and $b(N) = O \lp N^2 \rp$, this follows from the estimate
\begin{align*}
(1 + S(N))^{b(N)} &= \exp \big( b(N) \cdot \log (1 + S(N)) \big) \\ &= \exp \Bigl( b(N) \cdot \lp S(N) + O\lp S(N)^2 \rp \rp \Bigr) \\ &= 1 + O\big( b(N) S(N) \big).
\end{align*}
Thus we obtain from (\ref{p1-Rbound}) that
\begin{align*}
R(N) = O\big(b(N)S(N)\big) = O_{K , \varepsilon} \left( \frac{N^3}{\sqrt{n}} \right).
\end{align*}
This completes the proof of the theorem. $\hfill \blacksquare$ \\ 
\par We end this section by proving a lemma that will be needed in the proof of Theorem \ref{p1-maintechnical}.
\begin{lemma}\label{p1-lemma2.5}
\textit{Let $N = o \lp n^{1/6} \rp$ be an integer. For all admissible $(i,j)$, let $\phi_{ij}$ and $\tilde{\alpha}_{ij}$ be given as in Lemma \ref{p1-lemma2.4}. Suppose that $K > 0$ and that $\left|\tilde{\alpha}_{ij} \right| \leqslant K$ for all admissible $(i,j)$. Then we have}
\begin{align*}
\prod_{i,j = 1 \atop i < j}^N \sin \lp \phi_{ij} \rp^{n-N-1} \cos \left( \frac{\tilde{\alpha}_{ij}}{\sqrt{n}} \right) = \bigl( 1 + h \lp \tilde{\alpha}_{12}, \ldots, \tilde{\alpha}_{N-1,N} \rp \bigr) \prod_{i,j = 1 \atop i < j}^N e^{-\tilde{\alpha}_{ij}^2 / 2}
\end{align*}
for some function $h \lp \tilde{\alpha}_{12}, \ldots, \tilde{\alpha}_{N-1,N} \rp$, which depends on $n$ and $N$ and satisfies $h = O_K \lp N^3 / \sqrt{n} \rp$.
\end{lemma}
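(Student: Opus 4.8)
The plan is to reduce everything to the substitution $t_{ij} = \sqrt{n}\lp \phi_{ij} - \tfrac{\pi}{2} \rp$ supplied by Lemma~\ref{p1-lemma2.4}, after which the claim becomes a logarithm-and-Taylor-expansion computation in the spirit of Lemma~\ref{p1-lemma2.2}. First I would note the trigonometric identity $\sin \phi_{ij} = \sin \lp \tfrac{\pi}{2} + \tfrac{t_{ij}}{\sqrt{n}} \rp = \cos \lp \tfrac{t_{ij}}{\sqrt{n}} \rp$, so that the left-hand side of the lemma equals $\prod_{i<j} \cos \lp \tfrac{t_{ij}}{\sqrt{n}} \rp^{n-N-1} \cos \lp \tfrac{\tilde{\alpha}_{ij}}{\sqrt{n}} \rp$. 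Before Lemma~\ref{p1-lemma2.4} is applicable I need $t_{ij} \in \lb -K-1, K+1 \rb$; this follows from the first remark after Lemma~\ref{p1-lemma2.4} in contrapositive form (if $|t_{ij}| > K+1$ then $|\tilde{\alpha}_{ij}| > K$ for $n$ large), using the hypothesis $|\tilde{\alpha}_{ij}| \leqslant K$. Thus, for $n$ large, the relation $t_{ij} = \tilde{\alpha}_{ij} + g_{ij}$ with $g_{ij} = O_K \lp N/\sqrt{n} \rp$ is available, and $g_{ij}$ — hence $t_{ij}$, hence each factor above — is a function of $\lp \tilde{\alpha}_{12}, \ldots, \tilde{\alpha}_{N-1,N} \rp$ alone.

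Next I would take the logarithm and expand. Using $\log \cos \lp x/\sqrt{n} \rp = -x^2/(2n) + O_K \lp 1/n^2 \rp$ for $|x| \leqslant K+1$, the $(n-N-1)$-th power of $\cos \lp t_{ij}/\sqrt{n} \rp$ contributes $-t_{ij}^2/2 + O_K \lp N/n \rp$ per admissible pair (here the term $(N+1) t_{ij}^2 /(2n)$ is $O_K(N/n)$ and $(n-N-1) \cdot O_K(n^{-2})$ is $O_K(1/n)$), while the extra factor $\cos \lp \tilde{\alpha}_{ij}/\sqrt{n} \rp$ contributes only $O_K \lp 1/n \rp$; summing over the $\nCr{N}{2}$ pairs gives $-\tfrac{1}{2}\sum_{i<j} t_{ij}^2 + O_K \lp N^3/n \rp$. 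I then substitute $t_{ij}^2 = \tilde{\alpha}_{ij}^2 + 2 \tilde{\alpha}_{ij} g_{ij} + g_{ij}^2 = \tilde{\alpha}_{ij}^2 + O_K \lp N/\sqrt{n} \rp$, using $|\tilde{\alpha}_{ij}| \leqslant K$, so that $\sum_{i<j} t_{ij}^2 = \sum_{i<j} \tilde{\alpha}_{ij}^2 + O_K \lp N^3/\sqrt{n} \rp$. Hence the logarithm of the left-hand side equals $-\tfrac{1}{2}\sum_{i<j} \tilde{\alpha}_{ij}^2 + O_K \lp N^3/\sqrt{n} \rp$, and since $N = o \lp n^{1/6} \rp$ this error tends to $0$, so exponentiating gives the left-hand side as $\bigl( 1 + O_K \lp N^3/\sqrt{n} \rp \bigr) \prod_{i<j} e^{-\tilde{\alpha}_{ij}^2/2}$. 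I would then simply define $h$ to be the left-hand side divided by $\prod_{i<j} e^{-\tilde{\alpha}_{ij}^2/2}$, minus $1$; by the observation in the first paragraph this is a function of $\lp \tilde{\alpha}_{12}, \ldots, \tilde{\alpha}_{N-1,N} \rp$ (depending on $n$ and $N$) of size $O_K \lp N^3/\sqrt{n} \rp$, as required.

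The calculation is largely routine, and the only point requiring genuine care is tracking the two distinct sources of error: the $(n-N-1)$-versus-$n$ discrepancy together with the extra cosine factor costs only $O_K \lp N^3/n \rp$, whereas replacing $t_{ij}$ by $\tilde{\alpha}_{ij}$ inside the quadratic sum is the dominant contribution $O_K \lp N^3/\sqrt{n} \rp$, which sets the final error rate and is exactly what forces the growth restriction $N = o \lp n^{1/6} \rp$. The one conceptual (rather than computational) ingredient is the claim that $h$ is a function of the $\tilde{\alpha}_{ij}$ alone; this is inherited directly from the corresponding assertion in Lemma~\ref{p1-lemma2.4}, so no new argument is needed there.
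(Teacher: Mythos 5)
Your proposal is correct and follows essentially the same route as the paper: rewrite $\sin\phi_{ij}$ as $\cos(t_{ij}/\sqrt{n})$, take logarithms and Taylor-expand, trade $t_{ij}$ for $\tilde{\alpha}_{ij}$ via Lemma~\ref{p1-lemma2.4}, and absorb the extra $\cos(\tilde{\alpha}_{ij}/\sqrt{n})$ factor as a lower-order error; the only cosmetic difference is that you perform the $t_{ij}\mapsto\tilde{\alpha}_{ij}$ substitution after summing the logarithms rather than term-by-term, which changes nothing. Your explicit check that $t_{ij}\in[-K-1,K+1]$ via the contrapositive of the remark following Lemma~\ref{p1-lemma2.4} is a welcome bit of care that the paper leaves implicit.
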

\begin{proof}
We prove the bound 
\begin{align*}
\prod_{i,j = 1 \atop i < j}^N \lp \sin (\phi_{ij})^{n-N-1} \cos \left( \frac{\tilde{\alpha}_{ij}}{\sqrt{n}} \right)   e^{\tilde{\alpha}_{ij}^2 / 2} \rp = 1 + O_K \lp \frac{N^3}{\sqrt{n}} \rp.
\end{align*}
From this it follows immediately from Lemma \ref{p1-lemma2.4} and the first remark after Lemma \ref{p1-lemma2.4} that with $t_{ij} = \sqrt{n}(\phi_{ij} - \frac{\pi}{2})$, we can take 
\begin{align*}
h \lp \tilde{\alpha}_{12}, \ldots, \tilde{\alpha}_{N-1,N} \rp := -1 + \prod_{i,j = 1 \atop i < j}^N \cos \lp \frac{t_{ij}}{\sqrt{n}} \rp^{n-N-1} \cos \lp \frac{\tilde{\alpha}_{ij}}{\sqrt{n}} \rp e^{\tilde{\alpha}_{ij}^2 / 2}.
\end{align*}
\par From the definition of $t_{ij}$, we see that
\begin{align*}
\prod_{i,j = 1 \atop i < j}^N \sin (\phi_{ij})^{n-N-1} = \prod_{i,j = 1 \atop i < j}^N \cos \left( \frac{t_{ij}}{\sqrt{n}} \right)^{n-N-1}.
\end{align*}
Furthermore, by Taylor expansions and Lemma \ref{p1-lemma2.4}, we see that
\begin{align*}
\log \left( \prod_{i,j = 1 \atop i < j}^N \cos \left( \frac{t_{ij}}{\sqrt{n}} \right)^{n-N-1} \right) &= (n-N-1) \sum_{i,j = 1 \atop i < j}^N \log \left( 1 - \frac{t_{ij}^2}{2n} + O_K \left( \frac{1}{n^2} \right) \right) \\
&= (n-N-1) \sum_{i,j = 1 \atop i < j}^N \left( - \frac{t_{ij}^2}{2n} + O_K \left( \frac{1}{n^2} \right) \right) \\
&= (n-N-1) \sum_{i,j = 1 \atop i < j}^N \left( - \frac{\tilde{\alpha}_{ij}^2}{2n} + O_K \left( \frac{N}{n^{3/2}} \right) \right) \\
&= - \hspm \sum_{i,j = 1 \atop i < j}^N \frac{\tilde{\alpha}_{ij}^2}{2} + O_K \left( \frac{N^3}{\sqrt{n}} \right).
\end{align*}
Therefore,
\begin{align}\label{p1-thm4.1-7}
\prod_{i,j = 1 \atop i < j}^N \sin (\phi_{ij})^{n-N-1} &= \exp \left(- \hspm \sum_{i,j = 1 \atop i < j}^N \frac{\tilde{\alpha}_{ij}^2}{2} + O_K \left( \frac{N^3}{\sqrt{n}} \right) \right) = \left( 1+ O_K \left( \frac{N^3}{\sqrt{n}} \right) \right) \prod_{i,j = 1 \atop i < j}^N e^{-\tilde{\alpha}_{ij}^2 / 2}.
\end{align}
On the other hand,
\begin{align*}
\prod_{i,j = 1 \atop i < j}^N \cos \left( \frac{\tilde{\alpha}_{ij}}{\sqrt{n}} \right) &= \exp \left( \sum_{i,j = 1 \atop i < j}^N \log \left( \cos \left( \frac{\tilde{\alpha}_{ij}}{\sqrt{n}} \right) \right) \right) = \exp \left( - \hspm \sum_{i,j = 1 \atop i < j}^N \lp \frac{\tilde{\alpha}_{ij}^2}{2n} + O_K \left( \frac{1}{n^2} \right) \rp \right) \\
&= \exp \left( O_K \left( \frac{N^2}{n} \right) + O_K \left( \frac{N^2}{n^2} \right) \right) = 1 + O_K \left( \frac{N^2}{n} \right).
\end{align*}
Together with (\ref{p1-thm4.1-7}), this proves the lemma. 
\end{proof}
\section{The Main Technical Result}
We now state the main technical result needed to prove Theorem \ref{p1-maintheorem}. In its proof we follow the arguments in \cite{soda10}. \par 
In order to state the result, we first introduce some notation. First of all, we will continue to denote by $b(N)$ the binomial coefficient $\nCr{N}{2}$. Furthermore, if $\lambda = N + \ell$ where $\ell \geqslant 0$ is an integer, and $\smash{f_N : \lp \R_{\geqslant 0} \rp^{b(N)+\lambda} \longrightarrow \R_{\geqslant 0}}$ is a function, we will write $f_N = f_{0,N} \otimes f_{1,N}$ if there are functions
\begin{align*}
f_{0,N} : \lp \R_{\geqslant 0} \rp^{\lambda} \longrightarrow \R_{\geqslant 0}, \quad \quad f_{1,N} : \lp \R_{\geqslant 0} \rp^{b(N)} \longrightarrow \R_{\geqslant 0} 
\end{align*}
with $\int f_{1,N} > 0$, such that for any non-negative real numbers $x_1, \ldots, x_{\lambda}$ and $y_1, \ldots, y_{b(N)}$, $f_N$ has the factorization property
\begin{align*}
f_N \lp x_1, \ldots, x_\lambda, y_1, \ldots, y_{b(N)} \rp = f_{0,N} \lp x_1, \ldots, x_\lambda \rp f_{1,N} \lp y_1, \ldots, y_{b(N)} \rp.
\end{align*}
To ease the notation, we will omit the $N$ subscript from the functions $f_{0,N}, \, f_{1,N}$, and $f_N$ and simply denote these functions by $f_0, \, f_1$, and $f$. We also recall that $M_\lambda$ denotes the set $\lbr \textbf{n} = \lp n_1, \ldots, n_\lambda \rp \in \Z_{+}^\lambda : n_i = n_j \Leftrightarrow i = j \rbr$.
\begin{theorem}\label{p1-maintechnical}
\textit{Let $N = N(n) \geqslant 2$ be an integer with $N = o \lp n^{1/6} \rp$. Let $\lambda := N+\ell$ with $\ell \geqslant 0$ an integer. Let $K, M > 0$ be real numbers and consider the family}
\begin{align*}
F\lp K, M, N, \ell \rp = \left \lbrace {\small \begin{array}{l}
    \text{$f : (\R_{\geqslant 0})^{b(N)+\lambda} \longrightarrow \R_{\geqslant 0}$ is Borel-measurable, }  \\
     f = f_0 \otimes f_1, \quad \big\| f_0 \big\|_\infty, \, \big\| f_1 \big\|_\infty  \leqslant \sqrt{M}, \quad \text{supp}(f) \subset \lb 0,K \rb^{\nCr{N}{2} + \lambda}
  \end{array}}
\right \rbrace. 
\end{align*}
Then, for any $f \in F \lp K, M, N, \ell \rp$, there exists a function $C\lp n, N, K, f_1 \rp$ such that
\begin{align*}
&\hspace{-0.5cm} \expect \lb \sum_{\textbf{n} \in M_\lambda} f\big(\mathcal{V}_{n_1}, \ldots, \mathcal{V}_{n_\lambda}, \tilde{\varphi}_{n_1 n_2}, \ldots, \tilde{\varphi}_{n_{N-1} n_N} \big) \rb 
\\ &= \bigl( 1 + C(n, N, K, f_1) \bigr) \expect \lb \sum_{\textbf{n} \in M_\lambda} f\left(T_{n_1}, \ldots, T_{n_\lambda}, \Phi_{n_1 n_2}, \ldots, \Phi_{n_{N-1} n_N} \right) \rb
\\ &\quad \quad + O \left( 2^{- \lambda} 5^{\lfloor \lambda^2 / 4 \rfloor} (\sqrt{3}/2)^n M (K+1)^\lambda \right)
\end{align*}
and $C \lp n, N, K, f_1 \rp = O_K \lp N^3 / \sqrt{n} \rp$, where the implied constant is independent of $f$.
\end{theorem}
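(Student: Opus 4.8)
The plan is to evaluate the left-hand side with Rogers' integration formula~(\ref{p1-rogersformula}) applied to
\begin{align*}
\rho(\textbf x_1, \ldots, \textbf x_\lambda) := f\lp \mathcal V(\textbf x_1), \ldots, \mathcal V(\textbf x_\lambda), \tilde\varphi(\textbf x_1, \textbf x_2), \ldots, \tilde\varphi(\textbf x_{N-1}, \textbf x_N)\rp,
\end{align*}
where $\mathcal V(\textbf x) = \pi^{n/2}|\textbf x|^n/\Gamma(n/2+1)$ and $\tilde\varphi(\textbf x, \textbf y) = \sqrt n(\tfrac\pi2 - \varphi(\textbf x, \textbf y))$ with $\varphi \in [0,\tfrac\pi2]$ the folded angle. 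Since $\rho$ is invariant under each reflection $\textbf x_i \mapsto -\textbf x_i$ and since, up to a null set, summing over $M_\lambda$ amounts after lifting to summing over ordered $\lambda$-tuples $(\textbf m_1, \ldots, \textbf m_\lambda) \in (L')^\lambda$ with $\textbf m_i \neq \pm\textbf m_j$ for $i \neq j$, counted with multiplicity $2^\lambda$, the quantity to be estimated equals $2^{-\lambda}\int_{X_n}\sum^{\ast}\rho(\textbf m_1, \ldots, \textbf m_\lambda)\,d\mu_n(L)$, the sum being restricted to such tuples. I would obtain this restricted sum from the unrestricted Rogers sum by inclusion--exclusion over which indices coincide up to sign. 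The point is that $f$ controls $\tilde\varphi_{ab}$ for all $a<b\leqslant N$, so $\rho$ vanishes whenever two of its first $N$ vector-arguments are proportional; and each \emph{degenerate} secondary term of~(\ref{p1-rogersformula}) --- one in which two of the $\lambda$ images $\sum_i\tfrac{d_{ij}}q\textbf x_i$ coincide up to sign --- is precisely of the shape produced by this inclusion--exclusion. Hence the degenerate secondary terms cancel and one is left with
\begin{align*}
\expect\lb \sum_{\textbf n \in M_\lambda} f\lp \mathcal V_{n_1}, \ldots, \tilde\varphi_{n_{N-1}n_N}\rp \rb = 2^{-\lambda}\int_{(\R^n)^\lambda}\rho\, d\textbf x_1 \cdots d\textbf x_\lambda + E_1,
\end{align*}
where $E_1$ collects the surviving secondary terms, in which the $\lambda$ images are pairwise non-proportional.

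For the leading term I pass to polar coordinates $\textbf x_i = r_i\textbf u_i$ and substitute $s_i = \mathcal V(\textbf x_i)$; as the unit-ball volume is $\omega_n/n$, each radial Jacobian $r_i^{n-1}dr_i$ becomes $\omega_n^{-1}ds_i$, the $\ell$ redundant directions $\textbf u_{N+1}, \ldots, \textbf u_\lambda$ contribute $\omega_n^\ell$, and since $f = f_0\otimes f_1$ with $f_1$ depending on $\textbf u_1, \ldots, \textbf u_N$ only, the integral factors as
\begin{align*}
\int_{(\R^n)^\lambda}\rho\, d\textbf x = \lp \int_{\R_{\geqslant 0}^\lambda} f_0(s_1, \ldots, s_\lambda)\, ds\rp \cdot \expect_{\sigma_{n-1}^N}\lb f_1\lp \tilde\varphi(\textbf u_1, \textbf u_2), \ldots, \tilde\varphi(\textbf u_{N-1}, \textbf u_N)\rp\rb.
\end{align*}
On the Poisson--Gaussian side, the multivariate Mecke formula for the intensity-$\tfrac12$ process gives $\expect[\sum_{M_\lambda} f_0(T_{n_1}, \ldots, T_{n_\lambda})] = 2^{-\lambda}\int_{\R_{\geqslant 0}^\lambda} f_0$, and independence of the $\Phi_{ij}$ from the $T_i$ --- together with the fact that $(\Phi_{n_an_b})_{a<b}$ is a vector of $b(N)$ i.i.d.\ positive standard Gaussians for every $\textbf n$ --- gives $\expect[\sum_{M_\lambda} f(T_{n_1}, \ldots, \Phi_{n_{N-1}n_N})] = 2^{-\lambda}\lp\int_{\R_{\geqslant 0}^\lambda} f_0\rp\expect[f_1(\Phi_{12}, \ldots, \Phi_{N-1,N})]$. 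Thus the leading term equals $(1+C)$ times the Poisson--Gaussian expectation, where $C = C(n,N,K,f_1)$ is defined by $1 + C = \expect_{\sigma_{n-1}^N}[f_1(\tilde\varphi_\bullet)] / \expect[f_1(\Phi_\bullet)]$, and the denominator is positive since $f_1 \geqslant 0$ and $\int f_1 > 0$.

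To show $C = O_K(N^3/\sqrt n)$ I would rerun the proof of Theorem~\ref{p1-pointsonspheretheorem} with $f_1$ in place of the product of indicators --- the theorem itself does not apply directly, but its proof uses the endpoint-separation hypothesis only to keep a denominator bounded below, a role now taken by $\int f_1 > 0$. Concretely: by spherical symmetry write $\expect_{\sigma_{n-1}^N}[f_1(\tilde\varphi_\bullet)]$ as an integral over the coordinates $\phi_{ij}$ of Lemma~\ref{p1-lemma2.4}; change variables through~(\ref{p1-dotprod}) to the $\tilde\alpha_{ij}$, the Jacobian contributing $1 + O_K(N^2/\sqrt n)$; apply Lemma~\ref{p1-lemma2.3} and Lemma~\ref{p1-lemma2.5} to replace the products of $\omega$-ratios and of the factors $\sin\phi_{ij}$ and $\cos(\tilde\alpha_{ij}/\sqrt n)$ by $(2\pi)^{-b(N)/2}\prod e^{-\tilde\alpha_{ij}^2/2}$ up to a factor $1 + O_K(N^3/\sqrt n)$ that is uniform in the arguments; restrict the domain to $\smash{[-K,K]^{b(N)}}$ by the first remark after Lemma~\ref{p1-lemma2.4}; and split into the $2^{b(N)}$ sign-orthants, on each of which the substitution $\tilde\alpha_{ij} \mapsto \pm\tilde\alpha_{ij}$ turns $f_1(|\tilde\alpha_\bullet|)\prod e^{-\tilde\alpha_{ij}^2/2}$ into $f_1\prod e^{-\cdot/2}$ on $\smash{[0,K]^{b(N)}}$. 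The identical orthant-splitting applied to $\expect[f_1(\Phi_\bullet)] = (2\pi)^{-b(N)/2}\int f_1(|z_\bullet|)\prod e^{-z_{ij}^2/2}\,dz$ produces the same integral, so the ratio is $1 + O_K(N^3/\sqrt n)$ as required. Non-negativity of $f_1$ is used precisely where the uniform additive error is turned into a multiplicative one; this is the source of the hypothesis $f_1 \geqslant 0$, cf.\ Remark~4 after Theorem~\ref{p1-maintheorem}.

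Finally, to bound $E_1$ I would follow Södergren's estimation of the secondary terms of~(\ref{p1-rogersformula}). In the term attached to $(\nu,\mu,q,D)$ with $|\nu| = m$, the conditions $d_{i\nu_j} = q\delta_{ij}$ force the $\nu_j$-th image to be $\textbf x_j$, so the integrand is $\leqslant M$ and supported where $\mathcal V(\textbf x_j) \leqslant K$ --- a ball of volume $K$ --- for $1 \leqslant j \leqslant m$; naively this bounds the term by $MK^m$, which is not yet small. But every surviving term has a $\mu$-column that is neither $\pm q e_i$ nor $\pm$ a scalar multiple of another column (else two images would be proportional and the term would have been cancelled), so the corresponding image is a nontrivial short integer combination of short free vectors, and estimating the volume of such a configuration (as in $\mathrm{vol}\{|\textbf x|,|\textbf x+\textbf y|,|\textbf x-\textbf y|\leqslant R\} \ll (\sqrt3/2)^n R^{2n}$, or better) gains a factor $(\sqrt3/2)^n$, leaving $O(M(K+1)^\lambda(\sqrt3/2)^n)$ per term. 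The prefactor $(e_1\cdots e_m/q^m)^n \leqslant q^{-n}$ makes the $q$-sum converge, and the number of admissible partitions together with the effectively bounded matrices $D$ is $O(5^{\lfloor\lambda^2/4\rfloor})$; collecting these and the $2^{-\lambda}$ gives the stated error $O(2^{-\lambda} 5^{\lfloor\lambda^2/4\rfloor}(\sqrt3/2)^n M(K+1)^\lambda)$. I expect the estimation of $E_1$ to be the main obstacle: verifying uniformly over all matrix data that the distinctness restriction removes every term that is not exponentially small in $n$, and controlling the combinatorics so that $\lambda$ enters the exponent only through $\lfloor\lambda^2/4\rfloor$. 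The leading-term analysis, by contrast, is essentially bookkeeping once Lemmas~\ref{p1-lemma2.3}--\ref{p1-lemma2.5} are available.
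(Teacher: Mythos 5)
Your outline matches the paper's in its main features: Rogers' formula, separation of degenerate and non-degenerate secondary terms, polar coordinates with the factorization $f = f_0 \otimes f_1$, a change of variables from spherical angles to normalized arccosines of inner products controlled by Lemmas \ref{p1-lemma2.3} and \ref{p1-lemma2.5}, and Campbell's theorem (Mecke) on the Poisson side. A bookkeeping remark on the first step: the paper avoids inclusion--exclusion entirely by applying Rogers' formula directly to the product $\tilde f \cdot I(\textbf x_i = \pm \textbf x_j \Leftrightarrow i = j)$. The indicator equals $1$ a.e.\ in the main term and annihilates precisely those secondary terms whose matrix $D$ forces two of the $\lambda$ images to be proportional; the surviving $D$ are those satisfying conditions 1) or 2) in the proof, which are bounded by \cite[Lemmas 7, 8]{rogers56} --- these give exactly the $5^{\lfloor\lambda^2/4\rfloor}$ and $(\sqrt 3/2)^n$ factors you anticipated heuristically. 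Your inclusion--exclusion route is conceivably also correct, but it is more work, and you do not track the secondary terms generated by the lower-order Rogers expansions of the correction sums; the paper's one-line observation sidesteps this.

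The genuine gap is that you never verify the containment (\ref{p1-theultimatecontainment}), namely that $n^{-1/2}[-K,K]^{b(N)} + \textbf p \subset \Omega_N$. The set $\Omega_N = J_N\bigl((0,\pi)^{b(N)}\bigr)$ of realizable angle tuples is a \emph{proper} subset of $(0,\pi)^{b(N)}$ (already for $N = 3$ the three pairwise angles of unit vectors obey triangle-type constraints). After your change of variables the sphere expectation $\expect_{\sigma_{n-1}^N}\bigl[f_1(\tilde\varphi_\bullet)\bigr]$ is an integral over $\sqrt n\,(\Omega_N - \textbf p)$, while $\expect\bigl[f_1(\Phi_\bullet)\bigr]$ integrates $f_1(|\cdot|)$ against the Gaussian over all of $\R^{b(N)}$. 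Matching the two up to a factor $1 + O_K(N^3/\sqrt n)$ requires the support cube of $f_1$, of side $\asymp n^{-1/2}$ around $\textbf p$ after translation, to sit inside $\Omega_N$; with $N$ fixed this is immediate, but with $N \to \infty$ it is conceivable a priori that $\Omega_N$ pinches around $\textbf p$ faster than $n^{-1/2}$, in which case the Gaussian integral would see mass the sphere integral cannot, and your ratio $1 + C$ would not be $1 + O_K(N^3/\sqrt n)$. The paper devotes the entire appendix (Proposition \ref{p1-Prop3.3} and Lemmas \ref{p1-lemma3.4}--\ref{p1-lemma3.10}) to ruling this out by producing an explicit cube of side $\asymp N^{-3}$ inside $\Omega_N$. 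Without this, your leading-term argument is incomplete.
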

\begin{proof}
Let $f \in F\lp K, M, N, \ell \rp$. For $\textbf{x}, \textbf{y} \in \R^n$, let us write $\tilde{\varphi} (\textbf{x}, \textbf{y}) = \sqrt{n} \lp \tfrac{\pi}{2} - \varphi (\textbf{x}, \textbf{y} ) \rp$ where $\varphi (\textbf{x}, \textbf{y}) \in \lb 0, \tfrac{\pi}{2} \rb$ is the angle between $\pm \textbf{x}$ and $\pm \textbf{y}$. Let $V_n$ denote the volume of the $n$-dimensional unit ball and define the function $\tilde{f} : \lp \R^n \rp^\lambda \rightarrow \R$ by
\begin{align*}
\tilde{f}(\textbf{x}_1, \ldots, \textbf{x}_\lambda) =  f\left( V_n |\textbf{x}_1|^n, \ldots, V_n |\textbf{x}_\lambda |^n, \tilde{\varphi}(\textbf{x}_1, \textbf{x}_2), \ldots, \tilde{\varphi}(\textbf{x}_{N-1}, \textbf{x}_N )\right)
\end{align*}
if $\textbf{x}_1, \ldots, \textbf{x}_N \neq 0$, or $\tilde{f}\lp \textbf{x}_1, \ldots, \textbf{x}_\lambda \rp = 0$ otherwise. Proceeding as in \cite{soda10}, using the mean value formula due to Rogers \cite[Thm. 4]{rogers55}, we find that
\begin{align}\label{p1-thm4.1-1}
\nonumber &\hspace{-0.5cm} \expect \lb \sum_{\textbf{n} \in M_\lambda} f\bigl( \mathcal{V}_{n_1}, \ldots, \mathcal{V}_{n_\lambda}, \tilde{\varphi}_{n_1 n_2}, \ldots, \tilde{\varphi}_{n_{N-1} n_N} \bigr) \rb \\ \nonumber 
&= 2^{-\lambda} \int_{\R^n} \cdots \int_{\R^n} \tilde{f}(\textbf{x}_1, \ldots, \textbf{x}_\lambda) I(\textbf{x}_i = \pm \textbf{x}_j \Leftrightarrow i = j) \hspace{0.1cm} d\textbf{x}_1 \cdots d\textbf{x}_\lambda
\\ \nonumber & \quad + 2^{-\lambda} \sum_{(\nu, \mu)} \sum_{q \geqslant 1} \sum_{D} \left( \frac{e_1}{q} \cdots \frac{e_m}{q} \right)^n \int_{\R^n} \cdots \int_{\R^n} \tilde{f} \left( \sum_{h = 1}^m \frac{d_{h1}}{q}\textbf{x}_h, \ldots, \sum_{h = 1}^m \frac{d_{h \lambda}}{q}\textbf{x}_h \right) 
\\ &\quad \quad \times I\left( \sum_{h = 1}^m \frac{d_{hi}}{q}\textbf{x}_h = \pm \sum_{h = 1}^m \frac{d_{hj}}{q}\textbf{x}_h \Leftrightarrow i = j\right) \hspace{0.1cm} d\textbf{x}_1 \cdots d\textbf{x}_m.
\end{align}
Since $D$ has dimensions $m \times k$, $m < k$, we immediately see that the last indicator function above vanishes if all entries of the matrix $D$ belong to the set $\lbrace 0, \pm 1 \rbrace$ and every column of $D$ has exactly one non-zero entry. Hence we can restrict the innermost summation above to take place over those $D$ satisfying one of the following (mutually exclusive) conditions:
\begin{itemize}
\item[1)] All entries of $D$ belong to $\lbrace 0, \pm 1 \rbrace$, and $D$ has at least one column with at least two non-zero entries.
\item[2)] At least one entry of $D$ is, in absolute value, at least $2$.
\end{itemize}
Let $B_K$ be the closed $n$-dimensional ball of volume $K$ centered at the origin. By \cite[Lemma 7]{rogers56} and \cite[Lemma 8]{rogers56}, restricting the summation to those $D$ satisfying 1) or 2) above gives us the estimate
\begin{align}\label{p1-thm4.1-2}
\nonumber 0 &\leqslant \sum_{(\nu,\mu)} \sum_{q \geqslant 1} \sum_D \left( \frac{e_1}{q} \cdots \frac{e_m}{q} \right)^n \int_{\R^n} \cdots \int_{\R^n} \prod_{j = 1}^\lambda I_{B_K} \left( \sum_{i = 1}^m \frac{d_{ij}}{q}\textbf{x}_i \right) \hsp d\textbf{x}_1 \cdots d\textbf{x}_m \\
\nonumber &\leqslant 2 \cdot 3^{\lfloor \lambda^2 / 4 \rfloor} (3/4)^{n/2} (K+1)^\lambda + 21 \cdot 5^{\lfloor \lambda^2 / 4 \rfloor} 2^{-n} (K+1)^\lambda \\
&\ll 5^{\lfloor \lambda^2/4 \rfloor} (\sqrt{3}/2)^n (K+1)^\lambda.
\end{align}
Now, since $|f| \leqslant M$ and $\text{supp}(f) \subset \lb 0,K \rb^{\lambda + b(N)}$, we find that $|\tilde{f}| \leqslant M$ and $\text{supp}\lp \tilde{f} \rp \subset B_K^\lambda$. Therefore we have
\begin{align}\label{p1-thm4.1-3}
\nonumber &\left| \frac{1}{2^\lambda} \sum_{(\nu,\mu)} \sum_{q \geqslant 1} \sum_D \left( \frac{e_1}{q} \cdots \frac{e_m}{q} \right)^n \int_{\R^n} \cdots \int_{\R^n} \tilde{f} \left( \sum_{h = 1}^m \frac{d_{h1}}{q}\textbf{x}_h, \ldots, \sum_{h = 1}^m \frac{d_{h \lambda}}{q} \textbf{x}_h \right) \right.  \\ 
\nonumber &\quad \quad \left.  \times I \left( \sum_{h = 1}^m \frac{d_{hi}}{q} \textbf{x}_h  = \pm \sum_{h = 1}^m \frac{d_{hj}}{q} \textbf{x}_h \Leftrightarrow i = j \right) \hsp d\textbf{x}_1 \cdots d\textbf{x}_m  \right|\\
&\leqslant \frac{M}{2^\lambda}  \sum_{(\nu,\mu)} \sum_{q \geqslant 1} \sum_D \left( \frac{e_1}{q} \cdots \frac{e_m}{q} \right)^n \int_{\R^n} \cdots \int_{\R^n} \prod_{j = 1}^\lambda I_{B_K} \left( \sum_{h = 1}^m \frac{d_{hj}}{q}\textbf{x}_h \right) \hsp d\textbf{x}_1 \cdots d\textbf{x}_m.
\end{align}
Along with (\ref{p1-thm4.1-1}) and (\ref{p1-thm4.1-2}) this proves the estimate
\begin{align}\label{p1-thm4.1-4}
\nonumber &\expect \lb \sum_{\textbf{n} \in M_\lambda} f\left(\mathcal{V}_{n_1}, \ldots, \mathcal{V}_{n_\lambda}, \tilde{\varphi}_{n_1 n_2}, \ldots, \tilde{\varphi}_{n_{N-1} n_N} \right) \rb \\ &\quad \quad = \frac{1}{2^\lambda} \int_{\R^n} \cdots \int_{\R^n} \tilde{f}(\textbf{x}_1, \ldots, \textbf{x}_\lambda) \hsp d\textbf{x}_1 \cdots d\textbf{x}_\lambda + O\left(2^{-\lambda} 5^{\lfloor \lambda^2/4 \rfloor} \lp \sqrt{3}/2 \rp^n M (K+1)^\lambda\right).
\end{align}
It remains to understand the integral. Following the argument in \cite{soda10}, by changing to spherical coordinates and using the symmetry of $\tilde{f}$, we see that
\begin{align}\label{p1-sphintegral}
\nonumber &\hspace{-0.5cm}\int_{\R^n} \cdots \int_{\R^n} \tilde{f}(\textbf{x}_1, \ldots, \textbf{x}_\lambda) \hsp d\textbf{x}_1 \cdots d\textbf{x}_\lambda 
\\ \nonumber &= \omega_n^{\lambda - N}  \left( \prod_{h = 0}^{N-1} \omega_{n-h} \right) \int_0^\infty \cdots \int_0^\infty \int_0^\pi \cdots \int_0^\pi f\left(V_n r_1^n, \ldots, V_n r_\lambda^n, \tilde{\varphi}\lp \textbf{u}_1, \textbf{u}_2 \rp, \ldots, \tilde{\varphi} \lp \textbf{u}_{N-1}, \textbf{u}_N \rp \right) 
\\ &\quad \quad \times \left( \prod_{j = 1}^\lambda r_j^{n-1} \right) \prod_{i,j = 1 \atop i < j}^N \sin (\phi_{ij})^{n-i-1} \hsp d\phi_{N-1,N} \cdots d\phi_{12} d r_\lambda \cdots dr_1,
\end{align}
where the $\textbf{u}_i$ are unit vectors given in Lemma \ref{p1-lemma2.4}. \par 
We note that for any admissible $(i,j)$, one has $\varphi \lp \textbf{u}_i, \textbf{u}_j \rp = \tfrac{\pi}{2} - \left| \tfrac{\pi}{2} - \arccos \lp \textbf{u}_i \cdot \textbf{u}_j \rp \right|$. For this reason, we want to change variables in the above integral and integrate with respect to 
\begin{align*}
\alpha_{ij} := \arccos\lp \textbf{u}_i \cdot \textbf{u}_j\rp, \quad \quad 1 \leqslant i < j \leqslant N.
\end{align*}
Therefore, consider the map $J_N \lp \phi_{12}, \ldots, \phi_{N-1,N} \rp = \lp \alpha_{12}, \ldots, \alpha_{N-1,N}\rp$ with
\begin{align}\label{p1-fuckedupfunction}
\alpha_{ij} = 
\begin{cases}
\phi_{ij}, &\text{if } i = 1,\\
\arccos \left(F\left(\phi_{1i}, \phi_{1j}, \ldots, \phi_{i-1,j} \right) + \cos (\phi_{ij}) X\left(\phi_{1i}, \phi_{1j}, \ldots, \phi_{i-1,j} \right) \right), &\text{if } i > 1,
\end{cases}
\end{align}
and
\begin{align*}
F(\phi_{1i}, \phi_{1j}, \ldots, \phi_{i-1,j}) &= \sum_{m = 1}^{i-1} \cos \phi_{mi}\cos \phi_{mj} \prod_{\ell = 1}^{m-1} \sin \phi_{ \ell i} \sin \phi_{\ell j}, \\
X(\phi_{1i}, \phi_{1j}, \ldots, \phi_{i-1,j}) &= \prod_{m = 1}^{i-1} \sin \phi_{mi} \sin \phi_{mj},
\end{align*}
cf. \cite[Eq. (3.8)]{soda10}. As in \cite{soda10}, we see that $J_N$ is a diffeomorphism from $(0,\pi)^{b(N)}$ to 
\begin{align*}
\Omega_N := J_N \left( (0,\pi)^{b(N)} \right)
\end{align*}
with Jacobian determinant 
\begin{align*}
\textbf{J}(J_N) = \prod_{i, j = 1 \atop i < j}^N \frac{\sin(\phi_{ij})^{N-i}}{\sin \alpha_{ij}}.
\end{align*}
Hence, by changing coordinates from $(r_1, \ldots, r_\lambda, \phi_{12}, \ldots, \phi_{N-1,N})$ to 
\begin{align*}
\left(\lp s_1 V_n^{-1} \rp^{1/n}, \ldots, \lp s_\lambda V_n^{-1}\rp^{1/n}, \alpha_{12}, \ldots, \alpha_{N-1,N}\right),
\end{align*}
we can write the integral (\ref{p1-sphintegral}) as
\begin{align}\label{p1-nextintegral}
\nonumber &\left( \prod_{h = 1}^{N-1} \prod_{m = 1}^h \frac{\omega_{n-m}}{\omega_{n-m+1}} \right) \int_0^\infty \cdots \int_0^\infty \int_{\Omega_N} f\left( s_1, \ldots, s_\lambda, \sqrt{n} \left| \alpha_{12} - \tfrac{\pi}{2} \right|, \ldots, \sqrt{n} \left| \alpha_{N-1,N} - \tfrac{\pi}{2} \right| \right) 
\\ &\quad \quad \times \prod_{i,j = 1 \atop i < j}^{N} \left( \sin(\phi_{ij})^{n-N-1} \sin \alpha_{ij} \right) \hsp d \alpha_{N-1,N} \cdots d \alpha_{12} ds_\lambda \cdots ds_1.
\end{align}
Changing coordinates again, we want to let $\alpha_{ij} = n^{-1/2} \tilde{\alpha}_{ij} + \tfrac{\pi}{2}$ for all admissible $(i,j)$, so that
\begin{align*}
\lp \alpha_{12}, \ldots, \alpha_{N-1,N} \rp = n^{-\frac{1}{2}} \lp \tilde{\alpha}_{12}, \ldots, \tilde{\alpha}_{N-1,N} \rp + \textbf{p}, \quad \quad \textbf{p} = \left( \tfrac{\pi}{2}, \ldots, \tfrac{\pi}{2} \right).
\end{align*}
Since $f$ vanishes if one of its arguments lies outside of $\lb 0, K \rb$, this change of variables will make sense provided that
\begin{align}\label{p1-theultimatecontainment}
n^{-\frac{1}{2}} \lb -K, K \rb^{b(N)} + \textbf{p} \subset \Omega_N.
\end{align}
At a first glance, it is not clear whether this inclusion holds as $n$ tends to infinity, given that $N$ depends on $n$. (For example, it is conceivable that, with $N = o \lp n^{1/6} \rp$, the diameter of $\Omega_N$ decreases at a rate faster than $n^{-1/2}$.) Since the proof of (\ref{p1-theultimatecontainment}) is somewhat involved, we establish the inclusion for $n$ large enough in the appendix. Assuming (\ref{p1-theultimatecontainment}) for now, the change of variables, (\ref{p1-sphintegral}), (\ref{p1-nextintegral}), Lemma \ref{p1-lemma2.3}, and Lemma \ref{p1-lemma2.5} show that
\begin{align}\label{p1-thm4.1-5}
\nonumber &\hspace{-0.5cm}\int_{\R^n} \cdots \int_{\R^n} \tilde{f}(\textbf{x}_1, \ldots, \textbf{x}_\lambda) \hsp d\textbf{x}_1 \cdots d\textbf{x}_\lambda 
\\[+0.5em] \nonumber &= \left( \prod_{h = 1}^{N-1} \prod_{m=1}^{h} \frac{\omega_{n-m}}{\omega_{n-m+1} \sqrt{n}} \right) \int_0^\infty \cdots \int_0^\infty \int_{-\infty}^\infty \cdots \int_{-\infty}^\infty f(s_1, \ldots, s_\lambda, |\tilde{\alpha}_{12}|, \ldots, |\tilde{\alpha}_{N-1,N} |) 
\\ \nonumber &\quad \quad \times \prod_{i,j = 1 \atop i < j}^N \sin (\varphi_{ij})^{n-N-1} \cos \left( \frac{\tilde{\alpha}_{ij}}{\sqrt{n}} \right) \hsp d \textbf{a} d\textbf{s}
\\ \nonumber &= \lp \frac{1}{\sqrt{2 \pi}} \rp^{\nCr{N}{2}} \bigl( 1 + p \lp n, N \rp \bigr) \int_0^\infty \cdots \int_0^\infty \int_{-\infty}^{\infty} \cdots \int_{-\infty}^{\infty} f_0 \lp s_1, \ldots, s_\lambda \rp f_1 \lp \left| \tilde{\alpha}_{12} \right|, \ldots, \left| \tilde{\alpha}_{N-1,N} \right| \rp \\ 
&\quad \quad \times \lp 1 + h \lp \tilde{\alpha}_{12}, \ldots, \tilde{\alpha}_{N-1,N} \rp  \rp \prod_{i,j = 1 \atop i < j}^N \exp \lp -\tilde{\alpha}_{ij}^2 / 2 \rp \hsp d \textbf{a} d \textbf{s},
\end{align}
where we wrote $d \textbf{a} = d\tilde{\alpha}_{N-1,N} \cdots d\tilde{\alpha}_{12}$ and $d \textbf{s} = ds_\lambda \cdots d s_1$. Continuing, we see that this integral equals
\begin{align}\label{p1-therealfidus}
\nonumber &\lp \frac{1}{\sqrt{2 \pi}} \rp^{\nCr{N}{2}} \bigl( 1 + p \lp n, N \rp \bigr) \lp \int_0^\infty \cdots \int_0^\infty f_0 \lp s_1, \ldots, s_\lambda \rp \hsp d\textbf{s} \rp  \\ 
&\quad \times \int_{-\infty}^\infty \cdots \int_{-\infty}^\infty \lp 1 + h \lp \tilde{\alpha}_{12}, \ldots, \tilde{\alpha}_{N-1,N} \rp \rp f_1 \lp \left| \tilde{\alpha}_{12} \right|, \ldots, \left| \tilde{\alpha}_{N-1,N} \right| \rp \prod_{i,j = 1 \atop i < j}^N  \exp \lp -\tilde{\alpha}_{ij}^2 / 2 \rp  \hsp d \textbf{a}.
\end{align}
Now, define the number $D(n,N,K,f_1)$ by
\begin{align*}
D(n,N,K,f_1) &= \lp \int_{-\infty}^\infty \cdots \int_{-\infty}^\infty f_{1} \lp \left| \tilde{\alpha}_{12} \right|, \ldots, \left| \tilde{\alpha}_{N-1,N} \right| \rp \prod_{i,j = 1 \atop i < j}^N \exp \lp -\tilde{\alpha}_{ij}^2 / 2 \rp  \hsp d \textbf{a} \rp^{-1} 
\\ &\quad \times \int_{-\infty}^\infty \cdots \int_{-\infty}^\infty h \lp \tilde{\alpha}_{12}, \ldots, \tilde{\alpha}_{N-1,N} \rp f_{1} \lp \left| \tilde{\alpha}_{12} \right|, \ldots, \left| \tilde{\alpha}_{N-1,N} \right| \rp \prod_{i,j = 1 \atop i < j}^N \exp \lp -\tilde{\alpha}_{ij}^2 / 2 \rp  \hsp d \textbf{a},
\end{align*}
which is well-defined since $\int f_1 > 0$. It follows that we now have
\begin{align}\label{veganfuckheads}
\nonumber &\int_{-\infty}^\infty \cdots \int_{-\infty}^\infty \lp 1 + h \lp \tilde{\alpha}_{12}, \ldots, \tilde{\alpha}_{N-1,N} \rp \rp f_{1} \lp \left| \tilde{\alpha}_{12} \right|, \ldots, \left| \tilde{\alpha}_{N-1,N} \right| \rp \prod_{i,j = 1 \atop i < j}^N \exp \lp -\tilde{\alpha}_{ij}^2 / 2 \rp  \hsp d \textbf{a} 
\\ &\quad = \lp 1 + D(n,N,K,f_1) \rp \int_{-\infty}^\infty \cdots \int_{-\infty}^\infty f_{1} \lp \left| \tilde{\alpha}_{12} \right|, \ldots, \left| \tilde{\alpha}_{N-1,N} \right| \rp \prod_{i,j = 1 \atop i < j}^N \exp \lp -\tilde{\alpha}_{ij}^2 / 2 \rp  \hsp d \textbf{a} .
\end{align}
Finally, we define
\begin{align*}
C(n, N, K, f_1) := D(n,N,K,f_1) + p(n,N) + D(n,N,K,f_1) p(n,N)
\end{align*}
and note that 
\begin{align*}
\bigl| C(n, N, K, f_1) \bigr| \ll \bigl| D(n, N, K, f_1) \bigr| \ll_K \frac{N^3}{\sqrt{n}}
\end{align*}
where the implied constant is independent of $f_1$. The last inequality follows from Lemma \ref{p1-lemma2.5} and the fact that in the integrals that appear in the definition of $D(n, N, K, f_1)$, the exponential functions and $f_1$ are non-negative. Then, (\ref{p1-thm4.1-4}), (\ref{p1-therealfidus}), and (\ref{veganfuckheads}) show that the expectation on the left-hand side in (\ref{p1-thm4.1-1}) equals
\begin{align*}
&\hphantom{=} \bigl( 1 + C(n, N, K, f_1) \bigr) \frac{1}{2^\lambda} \left( \frac{1}{\sqrt{2 \pi}} \right)^{\nCr{N}{2}} \int_0^\infty \cdots \int_0^\infty \int_{-\infty}^\infty \cdots \int_{-\infty}^\infty 
 f \lp s_1, \ldots, s_\lambda, |\tilde{\alpha}_{12}|, \ldots, |\tilde{\alpha}_{N-1,N}| \rp \\ &\quad \quad \times \prod_{i,j = 1 \atop i < j}^N \exp \lp - \tilde{\alpha}_{ij}^2 / 2 \rp \hsp  d \textbf{a} d \textbf{s} +O\left(2^{-\lambda} 5^{\lfloor \lambda^2/4 \rfloor} \lp \sqrt{3}/2 \rp^n M (K+1)^\lambda\right)
\\ &= \bigl( 1 + C(n,N,K,f_1) \bigr) \frac{1}{2^\lambda} \left( \frac{2}{\pi} \right)^{\frac{1}{2} \nCr{N}{2}} \int_0^\infty \cdots \int_0^\infty \int_{0}^\infty \cdots \int_{0}^\infty f (s_1, \ldots, s_\lambda, \eta_{12}, \ldots, \eta_{N-1,N}) \\ &\quad \quad \times \prod_{i,j = 1 \atop i < j}^N \exp \lp - \eta_{ij}^2 / 2 \rp \hsp  d \eta_{N-1, N} \cdots d \eta_{12} d \textbf{s} + O\left(2^{-\lambda} 5^{\lfloor \lambda^2/4 \rfloor} \lp \sqrt{3}/2 \rp^n M (K+1)^\lambda\right).\\[-1.3em]
\end{align*}
Finally, we apply Campbell's theorem in the form given in \cite[Lemma 4.2]{soda10} to finish the proof.
 \end{proof} 
\section{Proof of Theorem \ref{p1-maintheorem}}
The aim of this section is to use Theorem \ref{p1-maintechnical} and a sieving argument to prove Theorem \ref{p1-maintheorem}.
\par We note that for the purpose of computing expected values, we may assume, with no loss of generality, that the random lattices $L$ under consideration satisfy $\mathcal{V}_1 < \mathcal{V}_2 < \mathcal{V}_3 < \cdots$, since in any dimension, the set $Z_n$ of all such unimodular lattices has full measure, cf. \cite[Lemma 5.1]{soda10}. \par 
We recall that for any $\lambda \geqslant 2$, the set $M_\lambda$ is defined as
\begin{align*}
M_\lambda = \left\lbrace \textbf{n} = \lp n_1, n_2, \ldots, n_\lambda \rp \in \Z_+^\lambda : n_a = n_b \Leftrightarrow a = b\right\rbrace.
\end{align*}
For $L \in X_n$, we will also write
\begin{align*}
N_n (L, x) := \# \left\lbrace j : \mathcal{V}_j \leqslant x \right\rbrace, \quad \quad N_\infty (\lb a, b \rb) := \# \left\lbrace j : a \leqslant T_j \leqslant b \right\rbrace, \quad \quad N_\infty (x) := N_\infty (\lb 0, x \rb).
\end{align*}
Let $\ell \geqslant 0$ be an integer, and define the random variable $R_\ell^n$ on $X_n$ by
\begin{align}
\begin{aligned}\label{p1-Rln}
R_\ell^n (L) &= \sum_{\textbf{n} \in M_{N + \ell}} f \lp \mathcal{V}_{n_1}, \ldots, \mathcal{V}_{n_N}, \tilde{\varphi}_{n_1 n_2}, \ldots, \tilde{\varphi}_{n_{N-1} n_N} \rp I\lp \mathcal{V}_{n_1} < \cdots < \mathcal{V}_{n_N} \text{ and } \mathcal{V}_{n_{N+1}} < \cdots < \mathcal{V}_{n_{N+ \ell}} < \mathcal{V}_{n_N} \rp
\\ &\stackrel{\text{a.e.}}{=} \sum_{\textbf{n} \in M_N} \nCr{n_N - N}{\ell} I(\mathcal{V}_{n_1} < \cdots < \mathcal{V}_{n_N}) f \lp \mathcal{V}_{n_1}, \ldots, \mathcal{V}_{n_N}, \tilde{\varphi}_{n_1 n_2}, \ldots, \tilde{\varphi}_{n_{N-1}n_N} \rp,
\end{aligned} \hspace{0.5cm} 
\end{align}
where the last equality holds for all lattices $L \in Z_n$. Also let
\begin{align}
\begin{aligned}\label{p1-Sln}
S_\ell^n (L) &= \sum_{j = 0}^\ell (-1)^j R_j^n (L) 
\\ &\stackrel{\text{a.e.}}{=} \sum_{\textbf{n} \in M_N} f \lp \mathcal{V}_{n_1}, \ldots, \mathcal{V}_{n_N}, \tilde{\varphi}_{n_1 n_2}, \ldots, \tilde{\varphi}_{n_{N-1}n_N} \rp I(\mathcal{V}_{n_1} < \cdots < \mathcal{V}_{n_N}) \sum_{j= 0}^\ell (-1)^j \nCr{n_N-N}{j},
\end{aligned}
\end{align}
where, again, the last inequality holds for all lattices $L \in Z_n$. We define $R_\ell^\infty$ by replacing each $\mathcal{V}_j$ and $\tilde{\varphi}_{n_s n_t}$ in (\ref{p1-Rln}) with $T_j$ and $\Phi_{n_s n_t}$, respectively. We then define $S_\ell^\infty$ similarly as in (\ref{p1-Sln}), but with $R_j^\infty$ in place of $R_j^n (L)$.\par
By applying Theorem \ref{p1-maintechnical} and writing
\begin{align*}
e(\lambda, n , K, M) = 2^{-\lambda} 5^{\lfloor \lambda^2 / 4 \rfloor} (\sqrt{3}/2)^n M (K+1)^\lambda,
\end{align*}
we get that
\begin{align}\label{p1-part1}
\nonumber \expect \lb S_\ell^n (L) \rb &= \sum_{j = 0}^\ell (-1)^j \expect \lb R_j^n (L) \rb 
\\ \nonumber &= \sum_{j = 0}^\ell (-1)^j  \ \bigl( 1 + C(n, N, K, f_1) \bigr) \expect \lb R_j^\infty \rb + O \left( \sum_{j = 0}^\ell e(N+j, n, K, M) \right)
\\ &=  \bigl( 1 + C(n, N, K, f_1) \bigr) \expect \lb S_\ell^\infty \rb + O \left( \sum_{j = 0}^\ell e(N+j, n, K, M) \right).
\end{align}
To bound the last term in (\ref{p1-part1}), we observe that
\begin{align*}
\sum_{j = 0}^\ell e(N+j, n, K,M) &=  \left( \frac{\sqrt{3}}{2} \right)^n M \left( \frac{K+1}{2} \right)^N   \sum_{j = 0}^\ell \left( \frac{K+1}{2} \right)^j 5^{\lfloor (N+j)^2 /4 \rfloor}\\
&\ll  5^{(N+\ell)^2 / 4}  \left( \frac{\sqrt{3}}{2} \right)^n M \left( \frac{K+1}{2} \right)^{N+\ell} \\
&\ll \left( \frac{3}{2} \right)^{(N+\ell)^2} \left( \frac{\sqrt{3}}{2} \right)^n M \left( \frac{K+1}{2} \right)^{N+\ell} \\
&\ll \left( \frac{\sqrt{3}}{2} \right)^n \hspace{-0.1cm} M  K^{(N+ \ell)^2},
\end{align*}
since $K \geqslant 3$. It follows that 
\begin{align}\label{p1-partypolser}
\expect \lb S_\ell^n (L) \rb &= \bigl( 1 + C(n, N, K, f_1) \bigr) \expect \lb S_\ell^\infty \rb + O_M \lp \left( \frac{\sqrt{3}}{2} \right)^n \hspace{-0.1cm} K^{(N+ \ell)^2} \rp.
\end{align}
\par If we let
\begin{align*}
X(L, \ell, N) &= \sum_{\textbf{n} \in M_N} f \lp \mathcal{V}_{n_1}, \ldots, \mathcal{V}_{n_N}, \tilde{\varphi}_{n_1 n_2}, \ldots, \tilde{\varphi}_{n_{N-1} n_N} \rp I(\mathcal{V}_{n_1} < \cdots < \mathcal{V}_{n_N} ){{n_N-N-1}\choose{\ell}},
\end{align*}
then provided that $\ell$ is even, by using the explicit expression for an alternating sum of binomial coefficients given in \cite[Sect. 5]{soda10}, we obtain that
\begin{align}\label{p1-nice2}
\nonumber S_\ell^n (L) &= \sum_{\textbf{n} \in M_N} f \lp \mathcal{V}_{n_1}, \ldots, \mathcal{V}_{n_N}, \tilde{\varphi}_{n_1 n_2}, \ldots, \tilde{\varphi}_{n_{N-1} n_N} \rp I(\mathcal{V}_{n_1} < \cdots < \mathcal{V}_{n_N}) \sum_{j = 0}^\ell (-1)^j {{n_N - N}\choose{j}} \\
&= f\left(\mathcal{V}_1, \ldots, \mathcal{V}_N, \tilde{\varphi}_{12},\ldots, \tilde{\varphi}_{N-1,N} \right) + X(L, \ell, N)
\end{align}
for all $L \in Z_n$. Similarly, if $\ell$ is odd, then for all $L \in Z_n$,
\begin{align}\label{p1-nice1}
S_\ell^n (L) = f\left(\mathcal{V}_{1}, \ldots, \mathcal{V}_{N}, \tilde{\varphi}_{12}, \ldots, \tilde{\varphi}_{N-1,N} \right) - X(L, \ell, N).
\end{align}
Consequently, for all $\ell$ and all $L \in Z_n$,
\begin{align*}
S_\ell^n (L ) = f\left(\mathcal{V}_{1}, \ldots, \mathcal{V}_{N}, \tilde{\varphi}_{12}, \ldots, \tilde{\varphi}_{N-1,N} \right) \pm X(L, \ell, N).
\end{align*}
Analogously, we also have (with probability $1$)
\begin{align}\label{p1-niceinfinity}
S_\ell^\infty = f\left(T_1, \ldots, T_N, \Phi_{12}, \ldots, \Phi_{N-1,N} \right) \pm X^\infty(\ell, N)
\end{align}
with
\begin{align*}
0 &\leqslant X^\infty(\ell, N) 
\\ &= \sum_{\textbf{n} \in M_N} f\left(T_{n_1}, \ldots, T_{n_N}, \Phi_{n_1 n_2}, \ldots, \Phi_{n_{N-1} n_N}\right) I(T_{n_1} < \cdots < T_{n_N}){{n_N-N-1}\choose{\ell}}\\
&\leqslant \sum_{\textbf{n} \in M_N} f \lp T_{n_1}, \ldots, T_{n_N}, \Phi_{n_1 n_2}, \ldots, \Phi_{n_{N-1} n_N} \rp I(T_{n_1} < \cdots < T_{n_N} \leqslant K){{N_\infty (K)-N-1}\choose{\ell}}
\end{align*}
where we used that $f$ is non-negative and vanishes outside $\lb 0,K \rb^{N+\nCr{N}{2}}$, and that on the support of the indicator function, $n_N = N_\infty \lp T_{n_N} \rp \leqslant N_\infty (K)$.
\par We want to further estimate $X^\infty$. To this end, we will need the following estimate. Suppose that $\mathsf{X}$ is a Poisson distributed random variable with mean $\lambda$ and that $x > \lambda$ is any number. Then 
\begin{align}\label{p1-poissonestimate}
\mathbb{P}\bigl( \mathsf{X} \geqslant x \bigr) \leqslant e^{-\lambda} \left( \frac{e \lambda}{x} \right)^x.
\end{align}
To see this, note that for any $t > 0$, Markov's inequality gives
\begin{align*}
\mathbb{P}\bigl( \mathsf{X} \geqslant x \bigr) = \mathbb{P}\lp e^{t\mathsf{X}} \geqslant e^{tx} \rp \leqslant e^{-tx} \expect \lb e^{t\mathsf{X}} \rb = e^{\lambda(e^t - 1)-tx}.
\end{align*}
Taking $t = \log \lp x/ \lambda \rp > 0$ now proves the claim. Towards estimating $X^\infty$, we note that 
\begin{align*}
\nCr{N_\infty(K) - N -1}{\ell} \leqslant 2^{N_\infty (K) - N - 1} < 2^{N_\infty(K) - N},
\end{align*}
and that $X^\infty$ vanishes if $\ell > N_\infty (K) - N - 1$. Recalling that
\begin{align*}
G_f (N) = \expect \lb f_1 \lp \Phi_{12}, \ldots, \Phi_{N-1,N} \rp \rb,
\end{align*}
we consequently have the estimate
\begin{align}\label{p1-estimatethis}
\nonumber \expect \lb X^\infty (\ell, N) \rb &\leqslant \sqrt{M} \sum_{\textbf{n} \in M_N} \expect \lb I \lp T_{n_1} < \cdots < T_{n_N} \leqslant K \rp 2^{N_\infty (K) - N} \right. 
\\ \nonumber &\quad \quad \times f_1 \lp \Phi_{n_1 n_2}, \ldots, \Phi_{n_{N-1}n_N} \rp I \bigl( N_\infty (K) \geqslant N + \ell + 1 \bigr) \Big] 
\\ \nonumber &= \sqrt{M}  G_f(N) \sum_{\textbf{n} \in M_N} \expect \lb I \lp T_{n_1} < \cdots < T_{n_N} \leqslant K \rp 2^{N_\infty (K) - N}  I \bigl( N_\infty (K) \geqslant N + \ell + 1 \bigr) \rb
\\ \nonumber &= \sqrt{M}  G_f(N)  \expect \lb \nCr{N_\infty (K)}{N} 2^{N_\infty (K) - N}  I\left(N_\infty(K) \geqslant N + \ell + 1 \right) \rb
\\ \nonumber &\ll_{K,M} \, G_f(N) \hspm \hspm \hspm  \sum_{s = N + \ell + 1}^\infty \nCr{s}{N} 2^{s - N} \frac{(K/2)^s}{s!} \leqslant 2^{-N} G_f(N)  \hspm \hspm \hspm \sum_{s = N + \ell + 1}^\infty \frac{(2K)^s}{s!} 
\\ &\leqslant 2^{-N} G_f(N) \left( \frac{2 e K}{N + \ell + 1} \right)^{N + \ell + 1},
\end{align}
where we used the fact that $N_\infty (K)$ is Poisson distributed with mean $K/2$, the estimate (\ref{p1-poissonestimate}) with $\lambda = 2K$, and the fact that the Gaussian variables are independent of the Poisson process and of each other.
\par Now (\ref{p1-niceinfinity}) and (\ref{p1-estimatethis}) imply that
\begin{align}\label{p1-hellsyeahbrotherman}
\expect \lb S_\ell^\infty \rb &= \expect \lb f \left(T_1, \ldots, T_N, \Phi_{12}, \ldots, \Phi_{N-1,N} \right) \rb + O_{K,M} \left( 2^{-N} G_f(N) \left( \frac{2 e K}{N + \ell + 1} \right)^{N + \ell + 1}  \right),
\end{align}
and (\ref{p1-partypolser}) and (\ref{p1-hellsyeahbrotherman}) imply that
\begin{align}\label{p1-wutang}
\nonumber \expect \lb S_\ell^n (L) \rb &= \bigl( 1 + C(n, N, K, f_1 ) \bigr) \expect \lb f \left(T_1, \ldots, T_N, \Phi_{12}, \ldots, \Phi_{N-1,N} \right) \rb  
\\ &\quad \quad +  O_{K,M} \left( 2^{-N} G_f(N) \left( \frac{2 e K}{N + \ell + 1} \right)^{N + \ell + 1}  \right) + O_M \left( \left( \frac{\sqrt{3}}{2} \right)^n \hspace{-0.1cm} K^{(N+ \ell)^2} \right).
\end{align}
Now let 
\begin{align*}
\ell_0 \lp N^2 \rp &= \max \left\lbrace m \in \mathbb{Z} : m \text{ is even and }m \leqslant N^2 \right\rbrace, \\
\ell_1 \lp N^2 \rp &= \max \left\lbrace m \in \mathbb{Z} : m \text{ is odd and } m \leqslant N^2 \right\rbrace.
\end{align*}
If $\ell = N^2$ or $\ell = N^2 - 1$, (\ref{p1-wutang}) implies that
\begin{align}\label{p1-dutang}
\nonumber \expect \lb S_\ell^n (L) \rb &= \bigl( 1 + C(n, N, K, f_1 ) \bigr) \expect \lb f \left(T_1, \ldots, T_N, \Phi_{12}, \ldots, \Phi_{N-1,N} \right) \rb  
\\ &\quad \quad +  O_{K,M} \left( 2^{-N} G_f(N) \left( \frac{2 e K}{N^2} \right)^{N^2}  \right) + O_M \left( \left( \frac{\sqrt{3}}{2} \right)^n \hspace{-0.1cm} K^{4N^4} \right).
\end{align}
Finally, (\ref{p1-nice2}) and (\ref{p1-nice1}) imply that
\begin{align*}
\expect \lb S_{\ell_1 (N^2)}^n (L) \rb \leqslant \expect \lb f\bigl( \mathcal{V}_1, \ldots, \mathcal{V}_N, \tilde{\varphi}_{12}, \ldots, \tilde{\varphi}_{N-1,N} \bigr) \rb \leqslant \expect \lb S_{\ell_0 (N^2)}^n (L) \rb, 
\end{align*}
and we conclude that
\begin{align}\label{p1-fuckyeahhhh}
\nonumber \expect \lb f\left(\mathcal{V}_1, \ldots, \mathcal{V}_n, \tilde{\varphi}_{12}, \ldots, \tilde{\varphi}_{N-1,N}\right) \rb &= \bigl( 1 + C(n, N, K, f_1 ) \bigr) \expect \lb f \left(T_1, \ldots, T_N, \Phi_{12}, \ldots, \Phi_{N-1,N} \right) \rb   \\ &\hspace{1cm} + O_{K,M} \left( 2^{-N} G_f(N) \left( \frac{2 e K}{N^2} \right)^{N^2} + \lp \frac{\sqrt{3}}{2} \right)^n \hspace{-0.1cm} K^{4N^4} \right).
\end{align}
Finally, since 
\begin{align*}
C(n, N, K, f_1) = O_{K} \lp N^3 / \sqrt{n} \rp,
\end{align*}
(\ref{p1-fuckyeahhhh}) completes the proof of Theorem \ref{p1-maintheorem}. \\\\
\textsc{Remark.} We stress that, although we certainly have 
\begin{align*}
\lp \frac{\sqrt{3}}{2} \right)^n \hspace{-0.1cm} K^{4N^4} \ll 2^{-N} \left( \frac{2 e K}{N^2} \right)^{N^2},
\end{align*}
the factor $G_f(N)$ could potentially be very small, thus making $\lp \sqrt{3} / 2 \right)^n K^{4N^4}$ the larger of the two error terms.\\ \par 
We now demonstrate that Theorem \ref{p1-maintheorem} gives a meaningful statement in (at least) the case mentioned in the introduction; namely when $f_0$ is the indicator function of the set $\lb 0, K \rb^{N}$, and $f_1$ is the indicator function of the Cartesian product $D_{12} \times \cdots \times D_{N-1,N}$ where the sets $D_{ij} \subset \lb 0, K \rb$ are arbitrary measurable sets of Lebesgue measure at least $\xi > 0$.\par 
We first note that the condition on the sets $D_{ij}$ guarantees that
\begin{align*}
G_f (N) = \left( \frac{2}{\pi} \right)^{\frac{1}{2}\nCr{N}{2}}\prod_{i, j = 1 \atop i < j}^N \int_{D_{ij}} e^{-x^2/2} \hspace{0.1cm} dx  \geqslant \lp \sqrt{ \frac{2}{\pi} } \int_{K- \xi}^K e^{-x^2 / 2} \hsp dx \rp^{b(N)} =: \, \delta ( \xi, K)^{b(N)}.
\end{align*}
We now see that 
\begin{align*}
\expect \lb f\left(T_1, \ldots, T_N, \Phi_{12}, \ldots , \Phi_{N-1,N} \right) \rb &=  G_f (N)\mathbb{P} \bigl( 0 \leqslant T_1 < \cdots < T_N \leqslant K \bigr)\\[+1em]
&= e^{-K/2}  G_f (N)  \sum_{s \geqslant N} \frac{(K/2)^s}{s!}   \\
&\gg_K   N^{-\frac{1}{2}} G_f (N) \left( \frac{eK}{2N} \right)^N 
\end{align*}
by Stirling's formula. It is clear that we have
\begin{align*}
N^{-1/2} \left( \frac{eK}{2N} \right)^N G_f (N) \gg 2^{-N} G_f (N) \lp \frac{2eK}{N^2} \rp^{N^2}.
\end{align*}
Moreover, the bound $G_f (N) \geqslant \delta ( \xi, K)^{b(N)}$ shows that
\begin{align*}
N^{-1/2} \left( \frac{eK}{2N} \right)^N G_f (N) \gg \lp \frac{\sqrt{3}}{2} \rp^n K^{4N^4},
\end{align*}
and the claim follows.
\section{The Normalization of the Angles}
In this final section, we prove the following result, which states that the normalization (\ref{p1-disdanormalization}) is still natural in the case of an increasing positive number $N = o \lp n^{1/6} \rp$ of short lattice vectors.  We recall that for $L \in X_n$, $\textbf{v}_i$ denotes (a representative of the class of) the $i$'th shortest non-zero vector in $L$, and that $\varphi \lp \textbf{v}_i , \textbf{v}_j \rp$ denotes the angle between $\pm \textbf{v}_i$ and $\pm \textbf{v}_j$ taken in $\lb 0, \tfrac{\pi}{2} \rb$. 
\begin{prop}\label{p1-anglenormalization-prop}
\textit{Let $N = N(n)$ be an integer with $N \longrightarrow \infty$ as $n \longrightarrow \infty$  and $N = o \lp n^{1/6} \rp$. Let $\psi (N) \ll N^2 $ be any function such that
\begin{align*}
\lim_{N \rightarrow \infty} \frac{\psi \lp N \rp}{\log N} = \infty.
\end{align*}
Let $\varepsilon > 0$ and let $C = C(N)$ be any function satisfying}
\begin{align*}
\lp \log N  + \log \psi(N) \rp^{1/2 + \varepsilon} \ll C(N).
\end{align*}
\textit{Then one has the estimate}
\begin{align}\label{p1-prop6.1statement}
\mu \lp \lbr L \in X_n : \frac{\pi}{2} - \varphi \lp \textbf{v}_i, \textbf{v}_j \rp \leqslant \frac{C}{\sqrt{n}} \text{ for all } 1 \leqslant i < j \leqslant N \rbr \rp = 1 + o_n(1)
\end{align}
\textit{as} $n \longrightarrow \infty$.
\end{prop}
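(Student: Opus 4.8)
\textit{Proof plan.} The plan is to estimate $\mu_n$ of the complement of the event in (\ref{p1-prop6.1statement}) by a union bound followed by a first--moment computation via Rogers' formula applied to pairs of short lattice vectors; this effectivizes the argument behind \cite[Prop. 1.1]{soda10}. Write
\begin{align*}
A_n = \lbr L \in X_n : \tilde{\varphi}_{ij}(L) \leqslant C \text{ for all } 1 \leqslant i < j \leqslant N \rbr,
\end{align*}
so that (\ref{p1-prop6.1statement}) is the assertion $\mu_n(A_n) = 1 + o_n(1)$ and $A_n^c = \lbr L : \tilde{\varphi}_{ij}(L) > C \text{ for some } i < j \leqslant N \rbr$. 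Fix a volume cutoff $K_0 = K_0(N)$, which it is convenient to take of order $N\psi(N)$ (so that $K_0 \to \infty$, $K_0/N \to \infty$, and $K_0 \ll N^3$), and put $R_0 := (K_0/V_n)^{1/n}$. The elementary point is that if $L \in A_n^c$ and $\mathcal V_N(L) \leqslant K_0$, then the offending vectors $\textbf{v}_i, \textbf{v}_j$ (with $i < j \leqslant N$, so $\mathcal V_i \leqslant \mathcal V_j \leqslant \mathcal V_N \leqslant K_0$) are two lattice vectors, distinct modulo $\pm$, of length $\leqslant R_0$ with $\varphi(\textbf{v}_i, \textbf{v}_j) < \tfrac{\pi}{2} - \tfrac{C}{\sqrt{n}}$; hence
\begin{align*}
\mu_n(A_n^c) \leqslant \mu_n\lp \mathcal V_N > K_0 \rp + \mu_n\lp \exists\, \textbf{m}_1, \textbf{m}_2 \in L' : \textbf{m}_1 \neq \pm\textbf{m}_2, \ |\textbf{m}_a| \leqslant R_0, \ \varphi(\textbf{m}_1, \textbf{m}_2) < \tfrac{\pi}{2} - \tfrac{C}{\sqrt{n}} \rp.
\end{align*}

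For the first term I would use the second moment method. By Siegel's formula (the $k=1$ case of (\ref{p1-rogersformula}), which carries no correction) the count $N_n(L, K_0)$ has expectation $K_0/2$, and applying (\ref{p1-rogersformula}) with $k=2$ (controlling the correction terms as in the next paragraph) gives $\mathrm{Var}\lp N_n(L, K_0) \rp = O(K_0)$. Since $\lbr \mathcal V_N > K_0 \rbr$ is exactly the event $N_n(L, K_0) < N$ while the mean $K_0/2$ satisfies $K_0/2 - N \asymp K_0$, Chebyshev's inequality yields $\mu_n(\mathcal V_N > K_0) \ll K_0/(K_0/2 - N)^2 = O(1/K_0) = o_n(1)$; alternatively one may quote the Poissonian behaviour of $\mathcal V_1, \ldots, \mathcal V_N$ valid for $N = O_\varepsilon(e^{\varepsilon n})$ from \cite{stromsod}.

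For the second term I would apply (\ref{p1-rogersformula}) with $k=2$ to the non--negative function
\begin{align*}
\rho(\textbf{x}, \textbf{y}) = I\lp |\textbf{x}| \leqslant R_0 \rp I\lp |\textbf{y}| \leqslant R_0 \rp I\lp \textbf{x} \neq \pm\textbf{y} \rp I\lp \varphi(\textbf{x}, \textbf{y}) < \tfrac{\pi}{2} - \tfrac{C}{\sqrt{n}} \rp,
\end{align*}
so that the term is bounded by $\expect\lb \sum_{\textbf{m}_1, \textbf{m}_2 \in L'} \rho(\textbf{m}_1, \textbf{m}_2) \rb$. In spherical coordinates the main term of (\ref{p1-rogersformula}) factors as $K_0^2\, p_n$, where $p_n$ is the probability that two independent uniform points on $S^{n-1}$ have normalized angle $\tilde{\alpha}$ (in the sense of Theorem \ref{p1-pointsonspheretheorem}) with $|\tilde{\alpha}| > C$; as this angle has density $\propto \sin^{n-2}$, the substitution $\theta = \tfrac{\pi}{2} - s/\sqrt{n}$ together with $\log\cos s \leqslant -s^2/2$ gives the genuine upper bound $p_n \ll C^{-1}e^{-C^2/2} \ll e^{-C^2/2}$. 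Every correction term of (\ref{p1-rogersformula}) for $k=2$ stems from a pair of parallel vectors $\textbf{x}, \tfrac{d}{q}\textbf{x}$ with $\gcd(q,d) = 1$ and $d \neq \pm q$ (the cases $d = \pm q$, i.e. $\textbf{m}_1 = \pm\textbf{m}_2$, being excluded by $\rho$), and contributes at most $K_0\, q^{-n}\min\lp 1, (q/|d|)^n \rp$; summing over $q$ and $d$ leaves a total of $O(K_0\, 2^{-n})$. Combining the two terms,
\begin{align*}
\mu_n(A_n^c) \ll \mu_n(\mathcal V_N > K_0) + K_0^2\, e^{-C^2/2} + K_0\, 2^{-n} \ll \frac{1}{K_0} + \lp N\psi(N) \rp^2 e^{-C^2/2} + N^3\, 2^{-n},
\end{align*}
and the right--hand side is $o_n(1)$: the first and third terms clearly, and the middle one because $C^2$ is forced to exceed $4\log K_0 \asymp 4\lp \log N + \log\psi(N) \rp$ by an amount going to infinity, which is exactly what the hypothesis $\lp \log N + \log\psi(N) \rp^{1/2+\varepsilon} \ll C$ supplies (noting that $\psi(N)/\log N \to \infty$ forces $\log N + \log\psi(N) \to \infty$).

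\textit{Main obstacle.} The delicate points are two. First, the bound for $p_n$ must be an honest exponential upper bound rather than the asymptotic identity $p_n = \mathbb{P}(|X| > C) + O_C(n^{-1/2})$ one would obtain from Theorem \ref{p1-pointsonspheretheorem} with $N = 2$: since $K_0$ can be nearly as large as $n^{1/2}$, an additive $O_C(n^{-1/2})$ multiplied by $K_0^2$ need not vanish, so Theorem \ref{p1-pointsonspheretheorem} cannot be invoked as a black box. Second, one must check that \emph{all} correction terms in Rogers' formula --- which encode exactly the ``parallel short vector'' configurations --- are exponentially small in $n$; here the exclusion $\textbf{m}_1 \neq \pm\textbf{m}_2$ and the mildness of the growth of $N$ are what is used, and this is precisely why the volume cutoff $K_0$ may be taken polynomial in $N$ without harm. (Deducing the proposition instead from Theorem \ref{p1-maintechnical} would founder on the $K$--dependence of its error $O_K(N^3/\sqrt{n})$, acceptable only for bounded $K$, whereas bounding the volumes here forces $K \asymp N$.)
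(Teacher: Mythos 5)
Your decomposition is essentially the one in the paper: cut the lattice off at a ball of volume $V \asymp N\psi(N)$, bound $\mu(\mathcal V_N > V)$ separately, and then bound the probability of a bad pair of short vectors by the expected number of such pairs computed via Rogers' formula for $k=2$ (your $B(V)$ is the paper's $B(V)$, your ``$\mathcal V_N > K_0$'' is the complement of the paper's $A(V)$, and your union bound is the paper's inclusion $A(V)\cap B(V)\subset D$). The two places where you diverge are both improvements in execution rather than structure, and both are correct. First, for $\mu(\mathcal V_N > V)$ the paper invokes Kim's effective estimate \cite[Thm.~4]{kim16} for $\mu(A(V))$ and then must verify $e^{-V/2}N(V/2)^N \to 0$; your Chebyshev/second--moment argument (variance $O(V)$ from the $(q,p)=(1,\pm1)$ diagonal terms of the $k=2$ Rogers formula, all other corrections $O(V\,2^{-n})$) is self--contained and gives $O(1/V) = o(1)$ directly, which is all that is needed here. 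Second, for the pair count the paper Taylor-expands $\cos(t/\sqrt n)^{n-2}$ on $[0,C]$ and picks up an error $O(V^2C^4/n)$; this forces the auxiliary restriction $C \ll N/\sqrt{\psi(N)}$ and a final monotonicity-in-$C$ argument to remove it. Your use of the pointwise bound $\cos x \leqslant e^{-x^2/2}$ on $[0,\pi/2]$ gives the genuine upper bound $p_n \ll C^{-1}e^{-C^2/2}$ with no additive error, which dispenses with both the auxiliary restriction and the final extension step. Your remark about why Theorem~\ref{p1-maintechnical} cannot be invoked directly (the $K$--dependence of its error) is also the correct diagnosis; the paper indeed works from Rogers' formula afresh rather than from its own effective theorem.

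One small point worth making explicit if you flesh this out: in passing from $\int_C^{\sqrt n\,\pi/2}\cos(s/\sqrt n)^{n-2}\,ds$ to $\ll C^{-1}e^{-C^2/2}$, the exponent you actually obtain is $-(n-2)s^2/(2n) = -s^2/2 + s^2/n$, and on the domain of integration $s^2/n \leqslant \pi^2/4$, so the correction factor $e^{s^2/n}$ is a bounded constant rather than negligible; it is harmless but should be absorbed explicitly into the implied constant. Also, to apply the Gaussian tail bound you need $C \gg 1$, which follows from the hypothesis $C \gg (\log N + \log\psi(N))^{1/2+\varepsilon}$ together with $N\to\infty$ and $\psi(N)\to\infty$, but this should be noted.
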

\noindent \textsc{Remark.} We stress that in relation to the normalization (\ref{p1-disdanormalization}), we are primarily interested in small choices of $C$.
\begin{proof}
For technical reasons, we first prove the proposition with the additional assumption that 
\begin{align}\label{p1-upperboundonC}
C \ll \frac{N}{\sqrt{\psi(N)}}.
\end{align}
Given $V > 0$, let $B_V$ denote the closed ball in $\R^n$ with volume $V$, which is centered at the origin. Also, for $0 \leqslant \varphi_1 < \varphi_2 \leqslant \tfrac{\pi}{2}$, we define
\begin{align*}
I \lp \varphi_1, \varphi_2, B_V, \textbf{x}, \textbf{y} \rp = 
\begin{cases}
1, &\text{if } \textbf{x} \neq \pm \textbf{y}, \hsp \hsp \textbf{x}, \textbf{y} \in B_V, \, \text{ and } \varphi \lp \textbf{x}, \textbf{y} \rp \in \lb \varphi_1, \varphi_2 \rb; \\
0, &\text{otherwise.}
\end{cases}
\end{align*}
We additionally let
\begin{align*}
M_{V, \varphi_1, \varphi_2} (L) = \frac{1}{8} \sum_{\textbf{m}_1 \in L' \atop \textbf{m}_2 \in L'} I \lp \varphi_1, \varphi_2, B_V, \textbf{m}_1, \textbf{m}_2 \rp,
\end{align*}
so that $M_{V, \varphi_1, \varphi_2} (L)$ counts the number of (representatives of) pairs of non-zero lattice vectors of $L \cap B_V$ whose angle lies between $\varphi_1$ and $\varphi_2$.\par 
We proceed as in \cite[Sect. 2]{soda10} and apply Rogers' formula \cite[Thm. 4]{rogers55} in combination with the first estimate on p. 247 in \cite{rogers55v2} to obtain the estimate
\begin{align*}
\expect \lb M_{V, \varphi_1, \varphi_2} \rb &= \frac{1}{8} \int_{\R^n} \int_{\R^n} I \lp \varphi_1, \varphi_2, B_V, \textbf{x}, \textbf{y} \rp \hsp d \textbf{x} \, d \textbf{y} + O \lp 2^{-n} V \rp.
\end{align*}
Furthermore, by changing to spherical coordinates in the integral, we see that 
\begin{align*}
\expect \lb M_{V, \varphi_1, \varphi_2} \rb &= \frac{V^2}{4} \frac{\omega_{n-1}}{\omega_n} \int_{\varphi_1}^{\varphi_2} \sin \lp \phi \rp^{n-2} \hsp d\phi + O \lp 2^{-n} V \rp
\end{align*}
cf. \cite[Eq. (2.3)]{soda10}. By this, and by linearity of expectation, we see that
\begin{align}\label{p1-linofexp}
\nonumber \expect \lb M_{V, 0, \pi/2 - C/\sqrt{n}} \rb 
&= \expect \lb M_{V, 0, \pi/2} \rb - \expect \lb M_{V, \pi/2 - C/\sqrt{n}, \pi / 2} \rb \\
&= \frac{V^2}{4} \frac{\omega_{n-1}}{\omega_n} \lp \int_{0}^{\pi/2} \sin \lp \phi \rp^{n-2} \hsp d\phi - \int_{\pi/2-C/\sqrt{n}}^{\pi/2} \sin \lp \phi \rp^{n-2} \hsp d\phi \rp + O \lp 2^{-n} V \rp.
\end{align}
We now inspect the two integrals individually. To deal with the first one, we use the fact that if $\Gamma$ denotes the Gamma function, then
\begin{align*}
2 \int_0^{\pi /2} \sin(u)^{2x-1} \cos(u)^{2y-1} \hsp du = \frac{\Gamma \lp x \rp \Gamma \lp y \rp}{\Gamma \lp x + y \rp}
\end{align*}
(cf. \cite[Thm. 2.5, Thm. 2.7]{bell}), from which it follows that
\begin{align}\label{p1-wallisisgamma}
\int_{0}^{\pi/2} \sin \lp \phi \rp^{n-2} \hsp d\phi = \frac{\sqrt{\pi}}{2} \frac{\Gamma \lp \tfrac{n-1}{2} \rp}{\Gamma \lp \tfrac{n}{2} \rp}.
\end{align}
Applying Stirling's formula, we obtain
\begin{align*}
\frac{\Gamma \lp \tfrac{n-1}{2} \rp}{\Gamma \lp \tfrac{n}{2} \rp} 
&=  \lp 1 + O \lp \frac{1}{n} \rp \rp \frac{\lp \tfrac{n-1}{2} \rp^{n/2 - 1} \exp \lp \tfrac{n}{2} \rp }{\lp \tfrac{n}{2} \rp^{(n-1)/2} \exp \lp \tfrac{n-1}{2} \rp} \\
&= \lp 1 + O \lp \frac{1}{n} \rp \rp \frac{\sqrt{e}}{\sqrt{2}} \frac{2 \sqrt{n}}{n-1}  \lp \frac{n-1}{n} \rp^{n/2} \\
&= \lp 1 + O \lp \frac{1}{n} \rp \rp\frac{\sqrt{2n}}{n-1}.
\end{align*}
Together with (\ref{p1-quotient}), (\ref{p1-wallisisgamma}) now proves that
\begin{align}\label{p1-integral1}
\frac{\omega_{n-1}}{\omega_n} \int_{0}^{\pi/2} \sin \lp \phi \rp^{n-2} \hsp d\phi &= \lp 1 + O \lp \frac{1}{n} \rp \rp \frac{1}{2} \sqrt{\frac{n}{n-1}} = \frac{1}{2} + O \lp \frac{1}{n} \rp.
\end{align}
As for the second integral, we see that 
\begin{align*}
\frac{\omega_{n-1}}{\omega_n} \int_{\pi/2-C/\sqrt{n}}^{\pi/2} \sin \lp \phi \rp^{n-2} \hsp d\phi 
= \frac{\omega_{n-1}}{\omega_n \sqrt{n}} \int_0^C \cos \lp \frac{t}{\sqrt{n}} \rp^{n-2} \hsp dt.
\end{align*}
By Taylor estimates, we have 
\begin{align*}
\cos \lp \frac{t}{\sqrt{n}} \rp^{n-2} = \exp \lp -\frac{t^2}{2} \rp \lp 1 + O \lp \frac{C^4}{n} \rp \rp
\end{align*}
on the interval $\lb 0, C \rb$. Hence, using again (\ref{p1-quotient}), we find that
\begin{align}\label{p1-integral2}
\frac{\omega_{n-1}}{\omega_n} \int_{\pi/2-C/\sqrt{n}}^{\pi/2} \sin \lp \phi \rp^{n-2} \hsp d\phi = \frac{1}{\sqrt{2 \pi}} \int_0^C \exp \lp -\frac{t^2}{2} \rp \hsp dt + O \lp \frac{C^4}{n} \rp = \frac{1}{2} \mathrm{erf} \lp \frac{C}{\sqrt{2}} \rp +  O \lp \frac{C^4}{n} \rp,
\end{align}
where, by the substitution $s = t/\sqrt{2}$, we expressed the integral in terms of the \textit{error function},
\begin{align*}
\mathrm{erf} (x) = \frac{2}{\sqrt{\pi}} \int_0^x \exp \lp -s^2 \rp \hsp ds.
\end{align*}
It follows from (\ref{p1-linofexp}), (\ref{p1-integral1}), and (\ref{p1-integral2}) that
\begin{align*}
\expect \lb M_{V, 0, \pi/2 - C/\sqrt{n}} \rb  &= \frac{V^2}{8}\lp 1 - \mathrm{erf} \lp \frac{C}{\sqrt{2}} \rp \rp + O \lp \frac{V^2 C^4}{n} \rp   + O \lp 2^{-n} V \rp.
\end{align*}
Furthermore, since
\begin{align*}
1 - \mathrm{erf} \lp \frac{C}{\sqrt{2}} \rp < \frac{2}{\sqrt{\pi}} \int_{C/\sqrt{2}}^\infty \frac{\sqrt{2} s}{C} \exp \lp -s^2 \rp \hsp ds  =  \frac{\sqrt{2}}{\sqrt{\pi}C} e^{-C^2 / 2},
\end{align*}
we have proved that with 
\begin{align*}
B(V) &= \lbr L \in X_n : \frac{\pi}{2} - \varphi ( \textbf{v} ,\textbf{w} ) \leqslant \frac{C}{\sqrt{n}} \text{ holds for all } \textbf{v}, \textbf{w} \in \lp B_V \cap L \rp \setminus \lbrace 0 \rbrace \rbr, \\ q \lp n, N, V, C \rp &= \frac{V^2}{C} e^{-C^2 / 2}  + \frac{V^2 C^4}{n} + 2^{-n} V,
\end{align*}
one has
\begin{align}\label{p1-measureofb}
\mu \bigl( B(V) \bigr) = 1 + O \bigl( q \lp n, N, V, C \rp \bigr).
\end{align}
\par Let us now write
\begin{align*}
A(V) = \Big\{ L \in X_n : \textbf{v}_1 \lp L \rp, \ldots, \textbf{v}_N \lp L \rp \in B_V \Big\}.
\end{align*}
If we assume that $V$ satisfies
\begin{align}\label{p1-V-conditionforKim}
V \leqslant \frac{1}{8} \lp \sqrt{\frac{n}{2}} - N \rp,
\end{align}
it follows from \cite[Thm. 4]{kim16} that if $V \geqslant 2$,
\begin{align}\label{p1-measureofa}
\nonumber \mu \lp A(V) \rp &= 1 - e^{-V/2} \sum_{i = 0}^{N-1} \frac{\lp V/2 \rp^i}{i!} + O\lp \lp \sqrt{\frac{n}{2}} - N \rp^{-1/2} \rp + O \lp \frac{0.999^n}{N!} \sqrt{\frac{n}{2}} \lp \frac{V}{2} + 1 \rp^{\sqrt{n/2}} \rp \\
&\gg 1 - e^{-V/2} N \lp \frac{V}{2} \rp^N - p \lp n, N, V \rp 
\end{align}
with 
\begin{align*}
p(n, N, V ) = \lp \sqrt{\frac{n}{2}} - N \rp^{-1/2} + 
\frac{0.999^n}{N!} \sqrt{\frac{n}{2}} \lp \frac{V}{2} + 1 \rp^{\sqrt{n/2}}.
\end{align*}
Finally, we define the set 
\begin{align*}
D = \lbr L \in X_n : \frac{\pi}{2} - \varphi \lp \textbf{v}_i, \textbf{v}_j \rp \leqslant \frac{C}{\sqrt{n}} \text{ holds for all } 1 \leqslant i < j \leqslant N \rbr.
\end{align*}
Then, by (\ref{p1-measureofb}), 
\begin{align*}
\mu \bigl( A(V) \bigr) &= \mu \bigl( A(V) \cap B(V) \bigr) + \mu \bigl( A(V) \cap B(V)^\complement \bigr) = \mu \bigl( A(V) \cap B(V) \bigr) + O \bigl( q \lp n,N, V,C \rp \bigr),
\end{align*}
and since $A(V) \cap B(V) \subset D$ for any $V$, it follows from (\ref{p1-measureofa}) that
\begin{align}\label{p1-probisalmost1}
\mu \lp D \rp \gg \mu \bigl( A(V) \bigr) - q \lp n, N, V, C \rp \gg 1 - e^{-V/2} N \lp \frac{V}{2} \rp^N - p(n, N, V )  - q(n, N, V, C) .
\end{align}
\par We now let $V = N \psi (N)$. It remains to prove that with $C$ in the given range, the right-hand side of (\ref{p1-probisalmost1}) is $1 + o_n(1)$. To this end, we note that (\ref{p1-V-conditionforKim}) is satisfied for $n$ large enough since $V \ll N^3$. It then follows from \cite[Thm. 4]{kim16} and the fact $N = o \lp n^{1/6} \rp$ that $p (n, N, V ) \longrightarrow 0$ as $n \longrightarrow \infty$. Thus, we only have to check that the following limits hold as $n \longrightarrow \infty$:
\begin{align}
2^{-n} V &\longrightarrow 0, \label{p1-check00}\\
e^{-V/2} N \lp \frac{V}{2} \rp^N &\longrightarrow 0, \label{p1-check0}\\
\frac{V^2}{C} e^{-C^2 / 2} &\longrightarrow 0, \label{p1-check1}\\
\frac{V^2 C^4}{n} &\longrightarrow 0. \label{p1-check2}
\end{align}
We see that (\ref{p1-check00}) holds since $V \ll N^3$ and $N = o \lp n^{1/6} \rp$. Also, (\ref{p1-check0}) is satisfied since 
\begin{align*}
e^{-V/2} N \lp \frac{V}{2} \rp^N = \exp \lp - \frac{V}{2} + \log N + N \log V - N \log 2 \rp
\end{align*}
and
\begin{align*}
\frac{V}{N \log V} = \frac{\psi (N)}{ \log N + \log \psi (N)} \gg \frac{\psi (N)}{\log N} \longrightarrow \infty.
\end{align*}
Next, (\ref{p1-check1}) is satisfied if $C^2$ dominates $\log V = \log N + \log \psi (N)$. This is true by assumption. Finally, (\ref{p1-check2}) is certainly satisfied if $V^2 C^4 \ll N^6$; that is, if $\psi (N)^2 C^4 \ll N^4$. This is also true by virtue of the additional assumption (\ref{p1-upperboundonC}). This proves the proposition under the assumption (\ref{p1-upperboundonC}). \par  
We may now extend the statement in the proposition to the full range of $C$ by noting that 
\begin{align*}
\mu \lp \lbr L \in X_n : \frac{\pi}{2} - \varphi \lp \textbf{v}_i, \textbf{v}_j \rp \leqslant \frac{C}{\sqrt{n}} \text{ for all } 1 \leqslant i < j \leqslant N \rbr \rp
\end{align*}
is visibly increasing in $C$. Hence, as the statement of the proposition holds for any choice of $C$ satisfying (\ref{p1-upperboundonC}) and the required lower bound, it also holds for any larger $C$. This completes the proof of Proposition \ref{p1-anglenormalization-prop}.
\end{proof}
\begin{appendices}
\section{Appendix: Proof of the Inclusion (\ref{p1-theultimatecontainment})}
It remains to prove the inclusion (\ref{p1-theultimatecontainment}) for $n$ large enough. We state the result as the following proposition, which therefore finishes the proof of Theorem \ref{p1-maintechnical}.
\begin{prop}\label{p1-Prop3.3}
\textit{Let $K > 0$ and let $N = o \left( n^{1/6} \right)$ be an integer. For $n$ sufficiently large, we have the inclusion} 
\begin{align*}
{\bf p} + n^{-\frac{1}{2}} \bigl[ -K, K \bigr]^{\nCr{N}{2}} \subset \Omega_N,
\end{align*}
where ${\bf p} = \lp \tfrac{\pi}{2}, \ldots, \tfrac{\pi}{2} \rp \in \R^{b(N)}$, $\Omega_N = J_N \lp (0, \pi )^{b(N)} \rp$, and 
\begin{align*}
J_N : \lp \phi_{12}, \ldots, \phi_{N-1,N} \rp \longmapsto \lp \alpha_{12}, \ldots, \alpha_{N-1,N} \rp
\end{align*}
is given by (\ref{p1-fuckedupfunction}).
\end{prop}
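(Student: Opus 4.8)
The plan is to rescale the picture around $\textbf{p}$ so that the desired inclusion becomes a surjectivity statement for a map that is $o(1)$-close to the identity, and then to invoke Brouwer's fixed point theorem. Concretely, for $n$ large enough that $(K+1)/\sqrt n < \pi/2$ one has $\textbf{p} + n^{-1/2}[-K-1,K+1]^{b(N)} \subset (0,\pi)^{b(N)}$, so I would introduce the continuous (indeed smooth) map
\[
\Psi : [-K-1,K+1]^{b(N)} \longrightarrow \R^{b(N)}, \qquad \Psi(\textbf{t}) := \sqrt{n}\,\bigl( J_N(\textbf{p} + n^{-1/2}\textbf{t}) - \textbf{p} \bigr).
\]
Writing $\textbf{t} = (t_{ij})$ and $\Psi(\textbf{t}) = (\tilde\alpha_{ij})$, Lemma \ref{p1-lemma2.4} gives $t_{ij} = \tilde\alpha_{ij} + g_{ij}(\tilde\alpha_{12},\ldots,\tilde\alpha_{N-1,N})$ with every $g_{ij} = O_K(N/\sqrt n)$, hence $\|\Psi(\textbf{t}) - \textbf{t}\|_\infty \leqslant C_K N/\sqrt n =: \delta_n$ uniformly over the box, with $C_K$ depending only on $K$. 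Since $N = o(n^{1/6})$ we have $\delta_n \to 0$, so $\delta_n \leqslant 1$ for all large $n$.

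With this in hand, I would fix an arbitrary $\textbf{y} \in [-K,K]^{b(N)}$ and consider $G(\textbf{t}) := \textbf{y} + \textbf{t} - \Psi(\textbf{t})$. Since $\|G(\textbf{t}) - \textbf{y}\|_\infty = \|\textbf{t} - \Psi(\textbf{t})\|_\infty \leqslant \delta_n \leqslant 1$, the continuous map $G$ sends the compact convex box $[-K-1,K+1]^{b(N)}$ into itself, so Brouwer's theorem produces a fixed point $\textbf{t}^\ast$, for which $\Psi(\textbf{t}^\ast) = \textbf{y}$. Because $|t^\ast_{ij}| \leqslant K+1 < \sqrt n\,\pi/2$, the point $\textbf{p} + n^{-1/2}\textbf{t}^\ast$ lies in $(0,\pi)^{b(N)}$ and satisfies $J_N(\textbf{p} + n^{-1/2}\textbf{t}^\ast) = \textbf{p} + n^{-1/2}\textbf{y}$ by the definition of $\Psi$; hence $\textbf{p} + n^{-1/2}\textbf{y} \in \Omega_N$. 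Letting $\textbf{y}$ range over $[-K,K]^{b(N)}$ then yields the asserted inclusion.

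The only delicate point — and the one I would spend most effort on — is the uniformity of the bound $\|\Psi(\textbf{t}) - \textbf{t}\|_\infty = O_K(N/\sqrt n)$: one must check that the error terms in the expansion of $\textbf{u}_i \cdot \textbf{u}_j$ underlying Lemma \ref{p1-lemma2.4} are genuinely uniform over $\textbf{t}$ in the box with implied constant depending only on $K$. This is the case, since those errors arise from Taylor expansions of $\sin(t_{ab}/\sqrt n)$ and $\cos(t_{ab}/\sqrt n)$ with $|t_{ab}| \leqslant K+1$, combined with products of at most $N$ factors of the form $1 + O_K(1/n)$; note that only continuity of $J_N$ on the box, not its being a diffeomorphism, is used. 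If one preferred to avoid Brouwer, the same conclusion follows by constructing $\textbf{t}^\ast$ directly: invert the recursion (\ref{p1-fuckedupfunction}) for $\alpha_{ij}$ one index $i$ at a time, noting that at step $i$ the previously found angles satisfy $|\sqrt n(\phi_{ab} - \tfrac\pi2)| \leqslant K+1$, which forces $F = O_K(N/n)$ and $X = 1 + O_K(N/n) \neq 0$, so that $\cos\phi_{ij} = (\cos\alpha_{ij} - F)/X \in (-1,1)$ determines $\phi_{ij} \in (0,\pi)$ with $|\sqrt n(\phi_{ij} - \tfrac\pi2)| \leqslant K+1$, closing the induction — this is just the computation behind Lemma \ref{p1-lemma2.4} run in reverse.
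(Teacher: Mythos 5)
Your proposal is correct, and it takes a genuinely different and considerably shorter route than the paper's own proof of Proposition \ref{p1-Prop3.3}. The paper's argument (Lemmas \ref{p1-lemma3.4}--\ref{p1-lemma3.10} and Proposition \ref{p1-prop3.8}) proceeds by defining a recursive sequence of ``$(N,j)$-good'' side-lengths $s_N^{(j)}$, shows that a hypercube of side $s_N^{(N-3)}$ centered at $\textbf{p}$ fits inside $\Omega_N$ (Lemma \ref{p1-lemma3.7}, itself an inductive argument closely resembling your alternative ``invert the recursion one index at a time'' sketch), and then establishes the explicit lower bound $s_N^{(N-3)} \geqslant N^{-2.9}$, which suffices because $n^{-1/2} \leqslant N^{-3}$ eventually. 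In contrast, you rescale so that the inclusion becomes the surjectivity of the map $\Psi$ onto $\lb -K, K\rb^{b(N)}$, observe via Lemma \ref{p1-lemma2.4} (already proved in the paper and whose uniformity over the box you correctly flag as the one point requiring care) that $\Psi$ is uniformly $O_K(N/\sqrt{n})$-close to the identity, and conclude by Brouwer's fixed-point theorem applied to $G(\textbf{t}) = \textbf{y} + \textbf{t} - \Psi(\textbf{t})$ on the compact convex box. Your observation that only continuity of $J_N$ is needed, not the diffeomorphism property, is also correct. What the paper's approach buys at the cost of length is an entirely elementary, quantitative statement (a hypercube of side $\gg N^{-2.9}$ fits in $\Omega_N$) without invoking a topological fixed-point theorem; what yours buys is brevity and a more transparent reuse of Lemma \ref{p1-lemma2.4}, which the paper proves anyway but does not exploit here.
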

We have $N = o\left( n^{1/6} \right)$, meaning that eventually $n^{-1/2} \leqslant N^{-3}$. We can assume, with no loss of generality, that $N \longrightarrow \infty$ when $n \longrightarrow \infty$, as the case where $N$ is bounded is handled in \cite{soda10}. Then it will suffice to show that for $n \gg 1$,
\begin{align}\label{p1-translationtoN}
\textbf{p} + N^{-3} \bigl[ -K, K \bigr]^{\nCr{N}{2}} \subset \Omega_N.
\end{align}
We initially make the following observations: First of all, for $ i < j, j'$ we have an equality of functions $\alpha_{ij} = \alpha_{ij'}$ (as defined in (\ref{p1-fuckedupfunction})) in the sense that, for any values of $\phi_{1i}, \phi_{1j}, \ldots, \phi_{ij}$,
\begin{align*}
\alpha_{ij}\lp \phi_{1i}, \phi_{1j}, \ldots, \phi_{ij} \rp = \alpha_{ij'}\lp \phi_{1i}, \phi_{1j}, \ldots, \phi_{ij} \rp.
\end{align*}
In particular, to understand the image of the function $\alpha_{ij}$, we only need to understand the image of $\alpha_{i,i+1}$. Second, if $i > 1$, then for fixed $\phi_{1i}, \ldots, \phi_{i-1,j}$, the function $\alpha_{ij}$ is increasing in $\phi_{ij}$, being a composition of two decreasing functions. \par 
For the remainder of the proof, we fix some notation. Let $I = (0, \pi)$, and for $s > 0$, let $I_s = \lb \tfrac{\pi-s}{2},  \tfrac{\pi+s}{2} \rb$. For $N \geqslant 3$, we say that $s \in \smash{\lp 0, \tfrac{\pi}{2} \rp}$ is \textit{\textbf{N-good}} if, for any $x_1, \ldots, x_{2(N-2)} \in I_s$ we have the inclusion
\begin{align*}
I_s \subset \alpha_{N-1,N} \left( \left( \prod_{m = 1}^{2(N-2)} \lbrace x_m \rbrace \right) \times I \right).
\end{align*}
It is worthwhile explaining the relevance of this notion, and why it is natural to consider in the situation at hand. Our current objective is to prove the existence of a hypercube of a certain magnitude and position inside the high-dimensional set $\Omega_N$. As it is impossible to visually inspect $\Omega_N$ for dimensional reasons, a natural idea is to use the fact that the diameter of a high-dimensional object can be estimated from below if one has sufficiently accurate information about the diameters of all of its possible lower-dimensional cross-sections, cf. Lemma \ref{p1-lemma3.7} below. The relevance of being $N$-good, then, is that an $N$-good number $s$ is precisely so small that a hypercube centered at $\lp \tfrac{\pi}{2}, \ldots, \tfrac{\pi}{2} \rp$ with sidelength $s$ is contained in the graph
\begin{align*}
I_s \times \cdots \times I_s \times I \times \alpha_{N-1, N} \lp I_s \times \cdots \times I_s \times I  \rp \subset \R^{2N-2}.
\end{align*}
We now give a criterion for being $N$-good. 
\begin{lemma}\label{p1-lemma3.4}
\textit{In order for $s$ to be $N$-good, it is sufficient that }
\begin{align*}
E(N-2,s) := (N-2) \sin \left( \frac{s}{2} \right)^2 - \cos \left( \frac{s}{2} \right)^{2(N-2)} + \sin \left( \frac{s}{2} \right) \leqslant 0.
\end{align*}
\end{lemma}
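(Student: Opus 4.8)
The plan is to study the function $\alpha_{N-1,N}$ with all of its arguments except the last one, $\phi_{N-1,N}$, frozen at arbitrary values $x_1,\ldots,x_{2(N-2)}\in I_s$; write these frozen values as $\phi_{\ell,N-1},\phi_{\ell,N}$ for $1\leqslant\ell\leqslant N-2$. By the definition of $J_N$ in (\ref{p1-fuckedupfunction}) together with the functions $F$ and $X$ defined immediately after it, one has
\begin{align*}
\textbf{u}_{N-1}\cdot\textbf{u}_N = F + X\cos\phi_{N-1,N},\qquad X=\prod_{\ell=1}^{N-2}\sin\phi_{\ell,N-1}\sin\phi_{\ell,N}>0,
\end{align*}
where $F$ and $X$ depend only on the frozen coordinates. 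As $\phi_{N-1,N}$ ranges over $I=(0,\pi)$ the factor $\cos\phi_{N-1,N}$ ranges over $(-1,1)$, so $\textbf{u}_{N-1}\cdot\textbf{u}_N$ ranges over $(F-X,F+X)$; since $\arccos$ is a strictly decreasing bijection, $\alpha_{N-1,N}$ attains exactly the values in $\lp\arccos(F+X),\arccos(F-X)\rp$.

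The second step turns $N$-goodness into a single scalar inequality. Because $\cos$ is decreasing on $\lb0,\pi\rb$, the inclusion $I_s=\lb\tfrac{\pi-s}{2},\tfrac{\pi+s}{2}\rb\subset\lp\arccos(F+X),\arccos(F-X)\rp$ holds as soon as $F+X\geqslant\cos\lp\tfrac{\pi-s}{2}\rp=\sin\tfrac{s}{2}$ and $F-X\leqslant\cos\lp\tfrac{\pi+s}{2}\rp=-\sin\tfrac{s}{2}$, i.e. as soon as $X-|F|\geqslant\sin\tfrac{s}{2}$ (the borderline case $X-|F|=\sin\tfrac{s}{2}$ is harmless, e.g. by reading $I$ as its closure). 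So it remains to bound $X-|F|$ from below uniformly over all $x_1,\ldots,x_{2(N-2)}\in I_s$.

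For the third step, note that on $I_s$ one has $\sin\phi\geqslant\cos\tfrac{s}{2}$ and $|\cos\phi|\leqslant\sin\tfrac{s}{2}$. Hence $X\geqslant\cos\lp\tfrac{s}{2}\rp^{2(N-2)}$, and bounding the remaining sine factors in each of the $N-2$ summands of $F$ by $1$ gives $|F|\leqslant(N-2)\sin\lp\tfrac{s}{2}\rp^{2}$. Therefore
\begin{align*}
X-|F|-\sin\tfrac{s}{2}\;\geqslant\;\cos\lp\tfrac{s}{2}\rp^{2(N-2)}-(N-2)\sin\lp\tfrac{s}{2}\rp^{2}-\sin\tfrac{s}{2}\;=\;-\,E(N-2,s),
\end{align*}
so $E(N-2,s)\leqslant0$ forces $X-|F|\geqslant\sin\tfrac{s}{2}$, and by the second step $I_s$ lies in the image of $\alpha_{N-1,N}$ for every admissible choice of the frozen coordinates; that is, $s$ is $N$-good.

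Everything here is elementary: the one point requiring genuine care is to identify correctly which variables $F$ and $X$ depend on, so that freezing every coordinate but $\phi_{N-1,N}$ makes the monotonicity reduction in the first step legitimate, together with keeping the two one-sided estimates oriented so that $X-|F|$ comes out as large as possible. I expect this bookkeeping, rather than any real obstruction, to be the main thing to watch.
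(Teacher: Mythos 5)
Your proof is correct and takes essentially the same approach as the paper: estimate $F$ and $X$ uniformly on $I_s$ by $|F|\leqslant(N-2)\sin(s/2)^2$ and $X\geqslant\cos(s/2)^{2(N-2)}$, then translate the $N$-goodness requirement into the condition $E(N-2,s)\leqslant 0$ via these bounds. The only (minor) difference is presentational — you merge the paper's two separate endpoint inequalities on $\sup\alpha_{N-1,N}$ and $\inf\alpha_{N-1,N}$ into the single condition $X-|F|\geqslant\sin(s/2)$.
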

\begin{proof}
Let $x_1, \ldots, x_{2(N-2)} \in I_s$ be arbitrary. The continuity of $\alpha_{N-1,N}$ means that it will suffice to show that the hypothesis $E(N-2, s) \leqslant 0$ implies the inequalities
\begin{align*}
\sup_{x \in I} \alpha_{N-1,N} \lp x_1, \ldots, x_{2(N-2)}, x \rp &\geqslant \max I_s = \frac{\pi + s}{2}, 
\\
 \inf_{x \in I} \alpha_{N-1,N} \lp x_1, \ldots, x_{2(N-2)}, x \rp &\leqslant \min I_s = \frac{\pi - s}{2}.
\end{align*}
Since $\alpha_{N-1,N}$ is increasing in $\phi_{N-1,N}$, it follows that
\begin{align*}
\cos \left( \sup_{x \in I} \alpha_{N-1,N} \lp x_1, \ldots, x_{2(N-2)}, x \rp \right) &= F \lp  x_1, \ldots, x_{2(N-2)} \rp - X \lp x_1, \ldots, x_{2(N-2)} \rp \\
&\leqslant \sup \lbr F(\textbf{x}) : \textbf{x} \in I_s^{2(N-2)} \rbr - \inf \lbr X(\textbf{x}) : \textbf{x} \in I_s^{2(N-2)} \rbr \\
&\leqslant \sum_{m = 1}^{N-2} \cos \left( \frac{\pi-s}{2} \right)^2 - \prod_{\ell = 1}^{N-2} \sin \left( \frac{\pi - s}{2} \right)^2 \\
&= (N-2) \cdot \sin \left( \frac{s}{2} \right)^2 - \cos \left( \frac{s}{2} \right)^{2(N-2)} \leqslant - \sin \left( \frac{s}{2} \right),
\end{align*}
where the last inequality holds by assumption. Then, since
\begin{align*}
-\sin \left(\frac{s}{2} \right) = \cos \left( \frac{\pi + s}{2} \right),
\end{align*}
taking arccos on both ends of the above inequality gives the first claim. \par 
As for the second claim, we have an analogous calculation:
\begin{align*}
\cos \left( \inf_{x \in I} \alpha_{N-1,N} \lp x_1, \ldots, x_{2(N-2)}, x \rp \right) 
&= F\lp x_1, \ldots, x_{2(N-2)} \rp + X \lp x_1, \ldots, x_{2(N-2)} \rp \\
&\geqslant \inf \lbr F(\textbf{x}) : \textbf{x} \in I_s^{2(N-2)} \rbr + \inf \lbr X(\textbf{x}) : \textbf{x} \in I_s^{2(N-2)}  \rbr \\
&\geqslant -\sum_{m = 1}^{N-2} \cos \left( \frac{\pi - s}{2} \right)^2 + \prod_{\ell = 1}^{N-2} \sin \left( \frac{\pi - s}{2} \right)^2 \\
&= -(N-2) \cdot \sin \left( \frac{s}{2} \right)^2 + \cos \left( \frac{s}{2} \right)^{2(N-2)} \geqslant \sin \left( \frac{s}{2} \right),
\end{align*}
and by the same reasoning as before, this proves the claim. 
\end{proof}
For all $N$ we have $E(N-2,0) = -1$, whereas $E(N-2, \pi) = N-1 > 0$. Hence, by continuity of $s \longmapsto E(N-2, s)$, the number
\begin{align*}
s_N := \inf \left\lbrace s > 0 : E(N-2,s) = 0 \right\rbrace
\end{align*}
exists and is positive. We note that Lemma \ref{p1-lemma3.4} implies that all numbers in the interval $\left(0, s_N \right]$ are $N$-good. \par 
If we let $s_N^{(0)} := s_N$ and define
\begin{align*}
G(N,j,x,s) := (N-j-2) \sin \left( \frac{s}{2} \right)^2 - \sin \left( \frac{x}{2} \right) \cos \left( \frac{s}{2} \right)^{2(N-j-2)} + \sin \left( \frac{s}{2} \right),
\end{align*}
then we can recursively define a sequence $s_N^{(0)}$, $s_N^{(1)}$, $s_N^{(2)}$, $\ldots$, $s_N^{(N-3)}$ by
\begin{align*}
s_N^{(j)} = \inf \left\lbrace s > 0 : G\left(N,j,s_N^{(j-1)}, s \right) = 0 \right\rbrace > 0.
\end{align*}
The existence and positivity of this number follows from considerations analogous to those that ensured these properties for $s_N$. \par
For $N \geqslant 4$ and $1 \leqslant j \leqslant N-3$, say that $s \in (0, \tfrac{\pi}{2})$ is \textit{\textbf{(N, \hspace{0.02em}j)-good}} if, for any $x_1, \ldots, x_{2(N-j-2)} \in I_s$, we have the inclusion
\begin{align*}
I_s \subset \alpha_{N-j-1,N-j} \left( \left( \prod_{m = 1}^{2(N-j-2)} \lbrace x_m \rbrace \right) \times I_{s_N^{(j-1)}} \right).
\end{align*}
Analogously to Lemma \ref{p1-lemma3.4}, we have the following criterion for being $(N,j)$-good. 
\begin{lemma}\label{p1-lemma3.5}
\textit{In order for $s$ to be $(N,j)$-good, it is sufficient that}
\begin{align*}
G\left(N,j, s^{(j-1)}_N, s \right) \leqslant 0.
\end{align*}
\end{lemma}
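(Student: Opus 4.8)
The plan is to transcribe the proof of Lemma~\ref{p1-lemma3.4} almost verbatim, the only structural change being that the final coordinate $\phi_{N-j-1,N-j}$ of the map $\alpha_{N-j-1,N-j}$ now ranges over the restricted closed interval $I_{s_N^{(j-1)}}$ rather than over all of $I=(0,\pi)$. So fix arbitrary $x_1,\ldots,x_{2(N-j-2)}\in I_s$. Since $\alpha_{N-j-1,N-j}$ is continuous and the domain $\bigl(\prod_{m=1}^{2(N-j-2)}\{x_m\}\bigr)\times I_{s_N^{(j-1)}}$ is compact and connected, the image $\alpha_{N-j-1,N-j}\bigl((\prod_m\{x_m\})\times I_{s_N^{(j-1)}}\bigr)$ is a closed subinterval of $[0,\pi]$, hence contains $I_s$ as soon as
\begin{align*}
\sup_{x\in I_{s_N^{(j-1)}}}\alpha_{N-j-1,N-j}\lp x_1,\ldots,x_{2(N-j-2)},x\rp &\geqslant \frac{\pi+s}{2},\\
\inf_{x\in I_{s_N^{(j-1)}}}\alpha_{N-j-1,N-j}\lp x_1,\ldots,x_{2(N-j-2)},x\rp &\leqslant \frac{\pi-s}{2}.
\end{align*}
By continuity of $\alpha_{N-j-1,N-j}$, establishing these two inequalities suffices.

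To prove them, I would exploit, exactly as in Lemma~\ref{p1-lemma3.4}, that $\alpha_{N-j-1,N-j}$ is increasing in its last argument --- it is the composition of the decreasing map $x\mapsto\cos x$ with the decreasing map $y\mapsto\arccos(F+yX)$, the latter because $X>0$ on $(0,\pi)$. Hence the supremum over $x\in I_{s_N^{(j-1)}}$ is attained at $x=\tfrac{\pi+s_N^{(j-1)}}{2}$ and the infimum at $x=\tfrac{\pi-s_N^{(j-1)}}{2}$, where $\cos x$ equals $-\sin(s_N^{(j-1)}/2)$ and $\sin(s_N^{(j-1)}/2)$ respectively. Applying the strictly decreasing function $\cos$ on $[0,\pi]$, the two desired inequalities become
\begin{align*}
F\lp x_1,\ldots,x_{2(N-j-2)}\rp - \sin\lp\tfrac{s_N^{(j-1)}}{2}\rp X\lp x_1,\ldots,x_{2(N-j-2)}\rp &\leqslant \cos\lp\tfrac{\pi+s}{2}\rp = -\sin\lp\tfrac{s}{2}\rp,\\
F\lp x_1,\ldots,x_{2(N-j-2)}\rp + \sin\lp\tfrac{s_N^{(j-1)}}{2}\rp X\lp x_1,\ldots,x_{2(N-j-2)}\rp &\geqslant \cos\lp\tfrac{\pi-s}{2}\rp = \sin\lp\tfrac{s}{2}\rp.
\end{align*}

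Now I would bound $F$ and $X$ over the box $I_s^{2(N-j-2)}$ exactly as in Lemma~\ref{p1-lemma3.4}: for $\phi\in I_s$ one has $|\cos\phi|\leqslant\sin(s/2)$ and $\cos(s/2)\leqslant\sin\phi\leqslant 1$, whence, since $\alpha_{N-j-1,N-j}$ corresponds to the index pair with $i=N-j-1$ and $i-1=N-j-2$ summands, $|F|\leqslant (N-j-2)\sin(s/2)^2$ and $X\geqslant\cos(s/2)^{2(N-j-2)}$. Therefore the left-hand side of the first inequality is at most $(N-j-2)\sin(s/2)^2-\sin(s_N^{(j-1)}/2)\cos(s/2)^{2(N-j-2)}=G(N,j,s_N^{(j-1)},s)-\sin(s/2)$, which is $\leqslant-\sin(s/2)$ precisely by the hypothesis $G(N,j,s_N^{(j-1)},s)\leqslant 0$; the second inequality follows symmetrically from $F+\sin(s_N^{(j-1)}/2)X\geqslant -(N-j-2)\sin(s/2)^2+\sin(s_N^{(j-1)}/2)\cos(s/2)^{2(N-j-2)}=\sin(s/2)-G(N,j,s_N^{(j-1)},s)\geqslant\sin(s/2)$. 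This proves both displayed inequalities and hence the lemma.

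I do not expect a genuine obstacle, since the argument is a transcription of Lemma~\ref{p1-lemma3.4} with the index pair $(N-1,N)$ replaced by $(N-j-1,N-j)$. The one point requiring care --- and the only place where the recursive nature of the $s_N^{(j)}$ enters --- is that the last angle $\phi_{N-j-1,N-j}$ is confined to $I_{s_N^{(j-1)}}$ rather than to the full interval $(0,\pi)$: where the proof of Lemma~\ref{p1-lemma3.4} could use the extreme values $\mp 1$ of $\cos$, here one only has the milder extreme values $\mp\sin(s_N^{(j-1)}/2)$, and it is exactly this factor $\sin(s_N^{(j-1)}/2)$ that appears in the definition of $G$ but not of $E$. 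One should also double-check that the monotonicity of $\alpha_{N-j-1,N-j}$ in $\phi_{N-j-1,N-j}$ is valid on the relevant domain, i.e.\ that $X$ does not vanish there; this is immediate since the fixed angles $x_m$ all lie in $I_s\subset(0,\pi)$.
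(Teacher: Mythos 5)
Your proposal is correct and matches the paper's argument: both reduce the claim to the two sup/inf inequalities via continuity and the monotonicity of $\alpha_{N-j-1,N-j}$ in its last variable, then apply $\cos$ and note that restricting $\phi_{N-j-1,N-j}$ to $I_{s_N^{(j-1)}}$ replaces the extreme values $\mp 1$ by $\mp\sin(s_N^{(j-1)}/2)$, after which the bounds $|F| \leqslant (N-j-2)\sin(s/2)^2$ and $X \geqslant \cos(s/2)^{2(N-j-2)}$ turn the desired inequalities into exactly $G(N,j,s_N^{(j-1)},s)\leqslant 0$. Your final check on the monotonicity (that $X>0$ on $I_s^{2(N-j-2)}$) is a sound precaution that the paper leaves implicit.
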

\begin{proof}
As in the proof of Lemma \ref{p1-lemma3.4}, it is enough to realize that any $s$ that satisfies the inequality has the two properties 
\begin{align*}
\sup_{x \in I_{S_N^{(j-1)}}} \alpha_{N-j-1,N-j} \lp x_1, \ldots, x_{2(N-j-2)}, x \rp &\geqslant \max I_s = \frac{\pi + s}{2}, 
\\ \inf_{x \in I_{S_N^{(j-1)}}} \alpha_{N-j-1,N-j} \lp x_1, \ldots, x_{2(N-j-2)}, x \rp &\leqslant \min I_s = \frac{\pi - s}{2},
\end{align*}
for arbitrary $x_1, \ldots, x_{2(N-j-2)} \in I_s$. The proof goes by estimating the functions $F$ and $X$ exactly as in the proof of Lemma \ref{p1-lemma3.4}, using also that
\begin{align*}
&\cos \left( \sup_{x \in I} \alpha_{N-j-1,N-j}\lp x_1, \ldots, x_{2(N-j-2)}, x \rp \right) \\
&\hspace{1cm} = F \lp x_1, \ldots, x_{2(N-j-2)} \rp + \cos \left( \frac{\pi + s_N^{(j-1)}}{2} \right) X \lp x_1, \ldots, x_{2(N-j-2)} \rp 
\\ &\hspace{1cm} = F \lp x_1, \ldots, x_{2(N-j-2)} \rp - \sin \left( \frac{s_N^{(j-1)}}{2} \right) X \lp x_1, \ldots, x_{2(N-j-2)} \rp 
\\ &\hspace{1cm} \leqslant \sup \lbr F(\textbf{x}) : \textbf{x} \in I_s^{2(N-j-2)}  \rbr - \sin\left( \frac{s_N^{(j-1)}}{2} \right) \cdot \inf \lbr X(\textbf{x}) : \textbf{x} \in I_s^{2(N-j-2)}  \rbr,
\end{align*}
and an analogous inequality for $\cos \left( \inf \alpha_{N-j-1, N-j} \right)$. 
\end{proof}
As before, we see that all the numbers $s$ with $0 < s \leqslant s_N^{(j)}$ are $(N,j)$-good. We now show that for $j > 0$, the property of being $(N,j)$-good is a refinement of the property of being $N$-good. This fact is the main ingredient in the proof of the important Lemma \ref{p1-lemma3.7} below.
\begin{lemma}\label{p1-lemma3.6}
\textit{For a fixed $N \geqslant 3$, the function $j \longmapsto s_N^{(j)}$ is decreasing. Therefore, for any $1 \leqslant j \leqslant N-3$, $\smash{s_N^{(j)}}$ is $(N,j)$-good, $(N,j-1)$-good, $\ldots$, $(N,1)$-good, and $N$-good. }
\end{lemma}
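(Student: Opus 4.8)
The plan is to reduce the lemma to the single computational fact
\[
G(N, j, s, s) > 0 \quad \text{for all } s \in (0, \pi) \text{ and all } 1 \leqslant j \leqslant N-3,
\]
and then run a short monotonicity argument. First I would record the elementary properties of the auxiliary function that were only sketched when $s_N^{(j)}$ was defined. Put $M := N - j - 2$, and note that $M \geqslant 1$ for every $j$ with $1 \leqslant j \leqslant N-3$. Differentiating in $s$ shows that, for each fixed $x \in (0,\pi]$, the map $s \longmapsto G(N,j,x,s)$ is strictly increasing on $(0,\pi)$; moreover $G(N,j,x,s) \to -\sin(x/2) < 0$ as $s \to 0^+$, while $G(N,j,x,\pi) = N-j-1 > 0$ and $G(N,j,x,\pi/2) \geqslant \tfrac{1}{\sqrt 2} > 0$ (using $M \geqslant 1$). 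Hence $s \longmapsto G(N,j,x,s)$ has a unique zero, it lies in $(0,\pi/2)$, and for $x = s_N^{(j-1)}$ this zero is precisely $s_N^{(j)}$; in particular $s_N^{(j)} \in (0,\pi/2)$, so the definition of an $(N,j)$-good number applies to it, and $G(N, j, s_N^{(j-1)}, s) > 0$ holds if and only if $s > s_N^{(j)}$. The same observations applied to $E(N-2,\cdot) = G(N,0,\pi,\cdot)$ give the corresponding statements for $s_N = s_N^{(0)}$.

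For the displayed inequality I would just expand: on $(0,\pi)$,
\[
G(N,j,s,s) = M\sin\!\left(\tfrac{s}{2}\right)^2 - \sin\!\left(\tfrac{s}{2}\right)\cos\!\left(\tfrac{s}{2}\right)^{2M} + \sin\!\left(\tfrac{s}{2}\right) = \sin\!\left(\tfrac{s}{2}\right)\left( 1 + M\sin\!\left(\tfrac{s}{2}\right) - \cos\!\left(\tfrac{s}{2}\right)^{2M} \right),
\]
and since $0 < \cos(s/2) < 1$ on this interval we have $\cos(s/2)^{2M} < 1 \leqslant 1 + M\sin(s/2)$, while $\sin(s/2) > 0$; so every factor is positive. (Note this does not use the equation defining $s_N^{(j-1)}$.) Now fix $j$ with $1 \leqslant j \leqslant N-3$. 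Since $s_N^{(j-1)} \in (0,\pi)$ by the first paragraph, the inequality above with $s = s_N^{(j-1)}$ gives $G(N, j, s_N^{(j-1)}, s_N^{(j-1)}) > 0$; as $s \longmapsto G(N,j,s_N^{(j-1)},s)$ is increasing with unique zero $s_N^{(j)}$, this forces $s_N^{(j)} < s_N^{(j-1)}$. Iterating yields the strictly decreasing chain $s_N = s_N^{(0)} > s_N^{(1)} > \cdots > s_N^{(N-3)}$, which is the monotonicity assertion.

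The ``therefore'' clause is then immediate. By the remark following Lemma \ref{p1-lemma3.5} every $s \in (0, s_N^{(i)}]$ is $(N,i)$-good, and by Lemma \ref{p1-lemma3.4} every $s \in (0, s_N]$ is $N$-good; since $s_N^{(j)} \leqslant s_N^{(i)}$ for each $1 \leqslant i \leqslant j$ and $s_N^{(j)} \leqslant s_N$, the point $s_N^{(j)}$ sits below all of these thresholds simultaneously, hence is $(N,j)$-good, $(N,j-1)$-good, $\ldots$, $(N,1)$-good, and $N$-good. I do not expect a genuine obstacle here; the only points demanding attention are the index bookkeeping — verifying $M = N-j-2 \geqslant 1$ throughout the relevant range of $j$, so that the factorisation of $G(N,j,s,s)$ is strictly positive rather than merely nonnegative, and noting that for $N = 3$ the statement is vacuous since no $s_N^{(j)}$ with $j \geqslant 1$ is defined — together with taking care to use the monotonicity of $G$ in its last argument in the correct direction when deducing $s_N^{(j)} < s_N^{(j-1)}$ from the positivity of $G$ at $s_N^{(j-1)}$.
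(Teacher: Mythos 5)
Your proposal is correct and takes essentially the same approach as the paper: factor $G(N,j,s,s)$ as $\sin(s/2)\bigl(1 + (N-j-2)\sin(s/2) - \cos(s/2)^{2(N-j-2)}\bigr)$, observe the bracket is positive, and conclude $s_N^{(j)} < s_N^{(j-1)}$. The only differences are cosmetic — the paper argues by contradiction (assuming $s_N^{(j)} > s_N^{(j-1)}$ implies $G(N,j,s_N^{(j-1)},s_N^{(j-1)}) < 0$) while you argue directly via the strict monotonicity of $s \mapsto G(N,j,x,s)$, and you fill in some details the paper leaves implicit (uniqueness of the zero, $s_N^{(j)} \in (0,\pi/2)$, and the explicit unwinding of the ``therefore'' clause).
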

\begin{proof}
Assume that, for some $j$, we have $s_N^{(j)} > s_N^{(j-1)}$. Then, by definition of the sequence $\smash{s_N^{(j)}}$ and the function $G$, we must have
\begin{align*}
G\left(N,j, s_N^{(j-1)}, s_N^{(j-1)}\right) < 0.
\end{align*}
Since we certainly have $0 < \smash{s_N^{(j-1)}} < 2 \pi$, this inequality is impossible since, for $s \in (0, 2 \pi)$, $G(N,j, s,s)$ is negative if and only if
\begin{align*}
(N-j-2) \sin \left( \frac{s}{2} \right) - \cos \left( \frac{s}{2} \right)^{2(N-j-2)} + 1 < 0,
\end{align*}
which cannot hold, given that the left-hand side is at least $(N-j-2) \sin \left( \tfrac{s}{2} \right)$, which is non-negative. This contradiction proves the lemma.
\end{proof}
We can now state and prove the main lemma needed in the proof of Proposition \ref{p1-Prop3.3}.
\begin{lemma}\label{p1-lemma3.7}
\textit{If} $s = s_N^{(N-3)}$\textit{, then $I_s^{b(N)} \subset \Omega_N.$}
\end{lemma}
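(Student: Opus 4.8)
The plan is to invert $J_N$ "row by row," exploiting the triangular structure noted before Lemma~\ref{p1-lemma3.4}: if the pairs are ordered so that $\alpha_{ij}$ precedes $\alpha_{i'j'}$ whenever $i<i'$, then $\alpha_{ij}$ is a function of the upstream variables $\phi_{ai},\phi_{aj}$ $(a<i)$ together with $\phi_{ij}$, it is continuous and strictly increasing in $\phi_{ij}$ for fixed upstream values, and, by observation~1, its range depends only on the row $i$ and is governed by $\alpha_{i,i+1}$. Given a target $(\alpha_{ij}^{\ast})_{i<j}\in I_s^{b(N)}$, $s=s_N^{(N-3)}$, I would build a preimage by setting $\phi_{1j}:=\alpha_{1j}^{\ast}$ (legal since $I_s\subset(0,\pi)$) and then, for $i=2,\dots,N-1$ in turn, solving $\alpha_{ij}=\alpha_{ij}^{\ast}$ for $\phi_{ij}$ once all lower rows are fixed.

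The one delicate choice is the \emph{scale} at which to solve each row. One cannot work throughout at the single scale $s$: $(N,j)$-goodness of $s$ only forces the new variable into $I_{s_N^{(j-1)}}$, which is strictly larger than $I_s$ and therefore useless as upstream data one row later. Instead I would solve row $i$ using the goodness of $s_N^{(N-1-i)}$, namely $(N,N-1-i)$-goodness for $2\le i\le N-2$ (then $N-1-i\in\{1,\dots,N-3\}$) and plain $N$-goodness for $i=N-1$ (then $s_N^{(0)}=s_N$); both hold because every positive number $\le s_N^{(j)}$ is $(N,j)$-good and every positive number $\le s_N$ is $N$-good. The claim proved by induction on $i$ is: the variables $\phi_{ij}$ $(j>i)$ can be chosen in $I_{s_N^{(N-i-2)}}$, read as $(0,\pi)$ when $i=N-1$, with $\alpha_{ij}=\alpha_{ij}^{\ast}$; the base case $i=1$ is the assignment above.

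For the step at row $i\ge2$ I would check the two hypotheses of the goodness statement at scale $s_N^{(N-1-i)}$. First, each upstream variable $\phi_{ai},\phi_{aj}$ $(a<i)$ lies in $I_{s_N^{(N-1-i)}}$: by the inductive hypothesis it lies in $I_{s_N^{(N-a-2)}}$, and $a\le i-1$ gives $N-a-2\ge N-1-i$, so the monotonicity of $j\mapsto s_N^{(j)}$ (Lemma~\ref{p1-lemma3.6}) yields $s_N^{(N-a-2)}\le s_N^{(N-1-i)}$ and hence the interval inclusion. Second, the target coordinate $\alpha_{ij}^{\ast}$ lies in $I_s\subseteq I_{s_N^{(N-1-i)}}$, since $i\ge2$ gives $N-3\ge N-1-i$. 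Because $\alpha_{ij}$ has the same formula as $\alpha_{i,i+1}$ (observation~1), freezing the upstream slots at our chosen values, which lie in $I_{s_N^{(N-1-i)}}$, the goodness inclusion $I_{s_N^{(N-1-i)}}\subset\alpha_{i,i+1}(\,\cdots\times I_{s_N^{(N-i-2)}}\,)$ produces $\phi_{ij}\in I_{s_N^{(N-i-2)}}$ with $\alpha_{ij}=\alpha_{ij}^{\ast}$, exactly as the inductive claim asserts, and the induction continues. Since every $s_N^{(\cdot)}<\pi/2$, all the intervals $I_{s_N^{(\cdot)}}$ sit inside $(0,\pi)$, so the tuple $(\phi_{ij})$ we have built lies in $(0,\pi)^{b(N)}$ and satisfies $J_N((\phi_{ij}))=(\alpha_{ij}^{\ast})$; since the target was arbitrary, $I_s^{b(N)}\subset\Omega_N$.

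I expect the only real obstacle to be recognising and then carefully bookkeeping this hierarchy of scales: each row must be processed one notch coarser than the interval its own output occupies, and the content of Lemma~\ref{p1-lemma3.6} is precisely that the decreasing chain $s_N^{(0)}>s_N^{(1)}>\cdots>s_N^{(N-3)}$ makes the output of every earlier row admissible as upstream data for the later rows, while keeping each coordinate of the target, all of which lie in the smallest window $I_s$, inside every scale's window. A minor remark: $N-j-2\ge1$ for $j\le N-3$ forces $s_N^{(j)}<s_N^{(j-1)}$ strictly, so in fact $s=s_N^{(N-3)}<s_N$ and all the goodness inclusions above are strict, which removes any concern about the endpoints of the closed box $I_s^{b(N)}$ (and one may assume $N\to\infty$, as in the surrounding discussion).
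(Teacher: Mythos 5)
Your proof is correct and follows the same row-by-row inversion strategy as the paper, keyed on the hierarchy of goodness scales $s_N^{(0)} \geqslant s_N^{(1)} \geqslant \cdots \geqslant s_N^{(N-3)}$ and on the chain from Lemma \ref{p1-lemma3.6}. In fact you are slightly \emph{more} careful than the paper on one point: at row $i\geqslant 3$, the paper's text invokes ``$s$ is $(N,N-1-i)$-good,'' but by the definition of $(N,j)$-goodness this only controls upstream inputs that lie in $I_s$, whereas the variables $\phi_{ai},\phi_{aj}$ produced in earlier rows lie only in the larger windows $I_{s_N^{(N-a-2)}}$. The intended argument, which you state explicitly, is to invoke the $(N,N-1-i)$-goodness of the number $s_N^{(N-1-i)}$ itself (so the inputs qualify, and the target $\alpha_{ij}^*\in I_s \subset I_{s_N^{(N-1-i)}}$ is covered). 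This is exactly the right patch, and the decreasing chain of Lemma \ref{p1-lemma3.6} is what makes the bookkeeping close.

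Two small remarks. First, your concluding claim that $s_N^{(j)}<s_N^{(j-1)}$ holds \emph{strictly} is not what Lemma \ref{p1-lemma3.6} delivers; its proof only rules out $s_N^{(j)}>s_N^{(j-1)}$, so you only get non-increasing. Fortunately, strictness is not needed anywhere: the goodness inclusion already asserts containment of the full closed interval $I_s$, so the endpoints of the box $I_s^{b(N)}$ are handled automatically, and each solved $\phi_{ij}$ lands in a closed subinterval of $(0,\pi)$. Second, the aside ``one may assume $N\to\infty$'' is harmless but unnecessary here --- Lemma \ref{p1-lemma3.7} is a finite-$N$ statement and is used as such.
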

\begin{proof}
Take any $\lp x_{12}, \ldots, x_{N-1,N} \rp \in I_s^{b(N)}$. We have to show that there are $\phi$'s in $(0, \pi)$ such that, for all $i$ and $j$,
\begin{align*}
\alpha_{ij}(\phi_{1i}, \phi_{1j}, \ldots, \phi_{i-1,i}, \phi_{i-1,j}, \phi_{ij}) = x_{ij}.
\end{align*}
We can obviously take $\phi_{1j} = x_{1j}$ for $j = 2, \ldots, N$ since $I_s \subset (0, \pi )$.\par 
The inductive nature of the method of proof in the cases $i = 2, 3, \ldots, N-1$ should be clear from the concrete instances $i = 2$ and $i = 3$ that follow. If $i = 2$, then the task is to show that for $2 < j \leqslant N$, there is a $\phi_{2j} \in I$ such that $\alpha_{2j} (\phi_{12}, \phi_{1j}, \phi_{2j}) = x_{2j}$. Since $s$ is $(N,N-3)$-good, we know that such a choice of $\phi_{2j}$ exists and is in $I_{s_N^{(N-4)}}$. Hence
\begin{align*}
\phi_{12}, \ldots, \phi_{1N}, \phi_{23}, \ldots, \phi_{2N} \in I_{s_N^{(N-4)}}
\end{align*}
by Lemma \ref{p1-lemma3.6}. \par 
Next, if $i = 3$, then we have to show that for $3 < j \leqslant N$ there is some $\phi_{3j} \in I$ such that $\alpha_{3j}(\phi_{13}, \phi_{1j}, \phi_{23}, \phi_{2j}, \phi_{3j}) = x_{3j}$. Since $s$ is $(N,N-4)$-good by Lemma \ref{p1-lemma3.6}, there is a choice of $\phi_{3j} \in \smash{I_{s_N^{(N-5)}}}$ with this property. Again, by using Lemma \ref{p1-lemma3.6}, we see that 
\begin{align*}
\phi_{12}, \ldots, \phi_{1N}, \phi_{23}, \ldots, \phi_{2N}, \phi_{34}, \ldots, \phi_{3N} \in I_{s_N^{(N-5)}}.
\end{align*}
\par Proceeding by induction, we get
\begin{align*}
\phi_{12}, \ldots, \phi_{1N}, \phi_{23}, \ldots, \phi_{2N}, \phi_{34}, \ldots, \phi_{3N}, \ldots,  \phi_{N-2,N} \in I_{s_N^{(N-(N-2)-2)}} = I_{s_N^{(0)}} = I_{s_N}.
\end{align*}
Finally, we want to find $\phi_{N-1,N} \in I$ such that 
\begin{align*}
\alpha_{N-1,N}\bigl( \phi_{1,N-1}, \phi_{1,N}, \ldots, \phi_{N-2,N-1}, \phi_{N-2,N}, \phi_{N-1,N} \bigr) = x_{N-1,N}.
\end{align*}
However, as $s_N$ is $N$-good, such a $\phi_{N-1,N} \in I$ exists. This concludes the proof. 
\end{proof}
In view of the claim made in Lemma \ref{p1-lemma3.7}, in order to prove (\ref{p1-translationtoN}) it remains to investigate the asymptotic behaviour of $s_N^{(N-3)}$. We have the following result.
\begin{prop}\label{p1-prop3.8}
\textit{For each sufficiently large $N$, there exists an $(N,N-3)$-good number $x$ satisfying $x \geqslant N^{-2.9}$.}
\end{prop}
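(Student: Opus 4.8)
The plan is to prove the stronger estimate $s_N^{(N-3)} \geqslant N^{-2}$ for all sufficiently large $N$. Since every number in $\lp 0, s_N^{(N-3)} \rb$ is $(N,N-3)$-good (as noted right after Lemma \ref{p1-lemma3.5}) and $N^{-2.9} \leqslant N^{-2}$, this gives the proposition. To control the recursively defined numbers $s_N = s_N^{(0)}, s_N^{(1)}, \ldots, s_N^{(N-3)}$ I would compare their reciprocals with an explicit \emph{linear} surrogate sequence: set $B_0 := 1/s_N$ and, for $1 \leqslant j \leqslant N-3$,
\[
B_j := B_{j-1} + \frac{N-j-2}{2} + 1 .
\]
Then $B_j$ is increasing, $B_j \geqslant 1$, and $B_j \geqslant \tfrac{N-1}{2}$ once $j \geqslant 1$. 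The central claim is that $s_N^{(j)} \geqslant 1/B_j$ for $0 \leqslant j \leqslant N-3$, to be proved by induction on $j$. Telescoping the definition gives $B_{N-3} = \tfrac{1}{s_N} + \tfrac{(N-3)(N-2)}{4} + (N-3)$; combined with the crude bound $s_N > 1/N$ established below, this is $< \tfrac{N^2}{4} + 2N \leqslant N^2$ for $N$ large, so $s_N^{(N-3)} \geqslant 1/B_{N-3} \geqslant N^{-2}$.

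For the base case I would first check $N^{-1} < s_N < 1$ for $N$ large: each of the three terms of $E(N-2,\cdot)$ is strictly increasing on $(0,\pi)$, so it suffices to verify $E(N-2,1/N) < 0$ and $E(N-2,1) > 0$; the former follows from $\sin(t) \leqslant t$, $\cos(t) \geqslant 1 - t^2/2$ and Bernoulli's inequality (which give $E(N-2,1/N) \leqslant \tfrac1N - 1 < 0$), the latter because $(N-2)\sin(1/2)^2$ eventually dominates the bounded quantity $\cos(1/2)^{2(N-2)}$. This gives $B_0 = 1/s_N \in (1,N)$ and $s_N^{(0)} = 1/B_0$, the base case. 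For the inductive step, fix $1 \leqslant j \leqslant N-3$, assume $s_N^{(j-1)} \geqslant 1/B_{j-1}$, and write $k = N-j-2 \geqslant 1$. Since $G(N,j,x,s) = k\sin(s/2)^2 - \sin(x/2)\cos(s/2)^{2k} + \sin(s/2)$ is strictly increasing in $s$ on $(0,\pi)$ (term by term) with $G \to -\sin(x/2) < 0$ as $s \to 0^+$, the number $s_N^{(j)}$ is its unique zero there, and $s_N^{(j)} \geqslant 1/B_j$ holds as soon as $G(N,j,s_N^{(j-1)},1/B_j) \leqslant 0$; and as $G$ is strictly decreasing in $x$ on $(0,\pi)$, the inductive hypothesis lets me replace $s_N^{(j-1)}$ by $1/B_{j-1}$. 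So it suffices to prove $G\lp N,j,1/B_{j-1},1/B_j \rp \leqslant 0$.

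This last point is the analytic core. Put $x = 1/B_{j-1}$, $s = 1/B_j$; the inequality $G \leqslant 0$ reads $k\sin(s/2)^2 + \sin(s/2) \leqslant \sin(x/2)\cos(s/2)^{2k}$. I would bound the left side above by $\tfrac{k s^2}{4} + \tfrac s2$, and — using $\sin(x/2) \geqslant \tfrac x2\lp 1 - \tfrac{x^2}{24} \rp$, $\cos(s/2) \geqslant 1 - \tfrac{s^2}{8}$, and Bernoulli's inequality, which are legitimate and useful here because $k s^2 \leqslant \tfrac{4(N-3)}{(N-1)^2} < 1$ when $j \geqslant 1$ — bound the right side below by $\tfrac x2\lp 1 - \tfrac{x^2}{24} \rp\lp 1 - \tfrac{k s^2}{4} \rp$. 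Writing $a := \tfrac k2 + 1 \geqslant \tfrac32$ (so $k = 2(a-1)$ and $B_j = B_{j-1} + a$), substituting $x = 1/B_{j-1}$ and $s = 1/(B_{j-1}+a)$, multiplying through by $2(B_{j-1}+a)$ and discarding a positive cross-term, the whole matter reduces to the elementary inequality
\[
a^2 + B \geqslant \frac{(B+a)^2}{24 B^2} + \frac{a-1}{2}, \qquad B := B_{j-1} \geqslant 1,\quad a \geqslant \tfrac32 ,
\]
which holds since $\tfrac{(B+a)^2}{24B^2} \leqslant \tfrac{a^2}{6}$ (treating $B \geqslant a$ and $B < a$ separately) and then $a^2 - \tfrac{a^2}{6} - \tfrac{a-1}{2} = \tfrac{5a^2}{6} - \tfrac a2 + \tfrac12 > 0$ for $a \geqslant \tfrac32$. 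This closes the induction, and together with the base case completes the proof.

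The step I expect to be the main obstacle is the last one, because one must choose the surrogate $B_j$ so that a single linear recursion is simultaneously sharp near $j = N-3$ (where $s_N^{(j)}$ is already of order $N^{-2}$ and decreases only slightly per step) and robust near $j = 1$ (where $s_N^{(j)}$ falls by a large factor and $k = N-j-2 \sim N$, so neither $\cos(s/2)^{2k}$ nor the term $k\sin(s/2)^2$ in $G$ may be dropped). The increment $\tfrac{N-j-2}{2} + 1$ is dictated by the heuristic $s_N^{(j)} \approx s_N^{(j-1)}\bigl(1 - \tfrac{(N-j-2)\,s_N^{(j-1)}}{2}\bigr)$, i.e. $\tfrac{1}{s_N^{(j)}} \approx \tfrac{1}{s_N^{(j-1)}} + \tfrac{N-j-2}{2}$, together with a cushion of $1$ absorbing the Taylor remainders; with this choice both regimes funnel into the single elementary inequality above. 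Incidentally, the hypothesis $N = o\lp n^{1/6} \rp$ plays no role here — only $N \to \infty$ is used — and the argument in fact yields $s_N^{(N-3)} \asymp N^{-2}$.
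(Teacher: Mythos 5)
Your proof is correct and takes a genuinely different, more elementary route than the paper's. The paper first proves Lemma \ref{p1-lemma3.9} (a calibrated bound $\sin s \geqslant \varepsilon(N)\, s$ with $\varepsilon(N) = N^{-9/10(N-3)} e^{1/N}$) and Lemma \ref{p1-lemma3.10} (an explicit sufficient condition in terms of roots of quadratics and a function $S(N,j,\cdot)$), then sets $x_j := S(N,j,x_{j-1})$ as a recursively defined lower bound for $s_N^{(j)}$ and argues by contradiction, propagating the implication $x_j < N^\alpha \Rightarrow x_{j-1} < N^{\alpha + 9/10(N-3)}$ across all $j$ to force $x_0 < N^{-2}$, which contradicts the explicit lower bound on $s_N$. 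This requires tracking the exponent $\alpha$ over $N-3$ iterations and delicate use of the factor $\varepsilon(N)$. You instead pass to reciprocals and observe that the recursion is essentially additive: the surrogate $B_j = B_{j-1} + \tfrac{N-j-2}{2} + 1$ with $B_0 = 1/s_N$ lets you prove $s_N^{(j)} \geqslant 1/B_j$ by a clean induction whose step, after elementary Taylor and Bernoulli estimates, collapses to the single two-variable polynomial inequality $a^2 + B \geqslant (B+a)^2/(24B^2) + (a-1)/2$ for $B \geqslant 1$, $a \geqslant 3/2$; telescoping then gives $B_{N-3} < N^2$ for large $N$. Your route bypasses Lemmas \ref{p1-lemma3.9} and \ref{p1-lemma3.10} entirely, yields the sharper conclusion $s_N^{(N-3)} \geqslant N^{-2}$, and makes transparent the heuristic $1/s_N^{(j)} \approx 1/s_N^{(j-1)} + (N-j-2)/2$. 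One small slip in the write-up: the correct normalizing factor to reach the displayed inequality is $2B_{j-1}(B_{j-1}+a)^2$, not $2(B_{j-1}+a)$, but the inequality itself and its verification are right. Your closing remark that $N = o(n^{1/6})$ plays no role here and only $N \to \infty$ matters is also accurate.
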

\noindent \textsc{Remark.} The only important property of the exponent $-2.9$ is that it is strictly bigger than $-3$. In particular, the proposition implies that the sequence $\lbr s_N^{(N-3)} \rbr_N $ cannot decay faster asymptotically than the sequence $N^{-3}$. \\ \par 
In order to prove Proposition \ref{p1-prop3.8}, we need a simple condition that guarantees that a number is $(N,j)$-good. To this end, we require the following lemma, which allows us to estimate the function $G$ from above on small intervals on the positive real line.
\begin{lemma}\label{p1-lemma3.9}
\textit{For $0 \leqslant s \leqslant 2/\sqrt{N-2}$ with $N$ sufficiently large, we have $\sin s \geqslant \varepsilon(N) s$ where $\varepsilon (N) := N^{-9/10(N-3)} e^{1/N}$.}
\end{lemma}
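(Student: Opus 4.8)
The plan is to reduce the two-sided statement to a one-variable inequality evaluated at the right endpoint of the interval, and then to compare the two resulting quantities via their behaviour near $1$.

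First I would observe that the function $s \mapsto \frac{\sin s}{s}$ is strictly decreasing on $(0,\pi)$ (its derivative has numerator $s\cos s-\sin s$, which vanishes at $0$ and has derivative $-s\sin s<0$ on $(0,\pi)$), and that $2/\sqrt{N-2}\leqslant 2<\pi$ for every $N\geqslant 3$. Hence the minimum of $\sin(s)/s$ over $[0,2/\sqrt{N-2}]$ is attained at the endpoint $s_0:=2/\sqrt{N-2}$, so it suffices to prove $\sin(s_0)/s_0\geqslant \varepsilon(N)$. Using the standard elementary inequality $\sin s\geqslant s-\tfrac{s^3}{6}$ (valid for all $s\geqslant 0$, since $\cos s\geqslant 1-\tfrac{s^2}{2}$), one gets the clean lower bound $\frac{\sin s_0}{s_0}\geqslant 1-\frac{s_0^2}{6}=1-\frac{2}{3(N-2)}$.

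It therefore remains to verify that $\varepsilon(N)\leqslant 1-\frac{2}{3(N-2)}$ for all sufficiently large $N$. Taking logarithms, $-\log\varepsilon(N)=\frac{9\log N}{10(N-3)}-\frac1N=:u_N$, which is positive for $N\geqslant 4$ and tends to $0$. Applying $1-e^{-u}\geqslant u-\tfrac{u^2}{2}\geqslant \tfrac{u}{2}$ for $0\leqslant u\leqslant 1$ gives $1-\varepsilon(N)\geqslant \tfrac12 u_N=\tfrac12\bigl(\frac{9\log N}{10(N-3)}-\frac1N\bigr)$. Since $\frac{9\log N}{10(N-3)}=\frac{9\log N}{10N}(1+o(1))$ dominates $\frac1N$, for $N$ large this is at least $\frac{\log N}{4N}$, which exceeds $\frac{2}{3(N-2)}$ once $\log N$ is large enough; combining with the previous display yields $\frac{\sin s_0}{s_0}\geqslant 1-\frac{2}{3(N-2)}\geqslant \varepsilon(N)$, as required.

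There is no genuine difficulty here; the only point deserving care is the comparison of error terms in the last step, where one must exploit that $\varepsilon(N)$ approaches $1$ at the slower rate $\Theta\!\left(\frac{\log N}{N}\right)$ while $\sin(s_0)/s_0$ approaches $1$ at the faster rate $\Theta\!\left(\frac1N\right)$ — the logarithmic factor coming from the exponent $-\tfrac{9}{10(N-3)}$ in $\varepsilon(N)$ is precisely what makes the inequality hold for all large $N$, and this is presumably why that exponent was chosen.
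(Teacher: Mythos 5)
Your proposal is correct, and it takes a genuinely different — and simpler — route than the paper. You first reduce to the endpoint $s_0 = 2/\sqrt{N-2}$ by the monotonicity of $s \mapsto \sin(s)/s$ on $(0,\pi)$, then apply the elementary bound $\sin s \geqslant s - s^3/6$ to get $\sin(s_0)/s_0 \geqslant 1 - \tfrac{2}{3(N-2)}$, and finish by a logarithmic comparison showing $1-\varepsilon(N) \gg \tfrac{\log N}{N} \gg \tfrac{1}{N}$. The paper instead works with the function $\sin s - \varepsilon s$ for an arbitrary $\varepsilon \in (0,1)$: it lower-bounds $\sin s$ via $\cos s \leqslant 1 - s^2/2 + s^4/24$ and the Pythagorean identity, reduces locating the first positive zero of the lower bound to an explicit cubic in $y = s^2$, solves that cubic in closed form to get a threshold $2\sqrt{\sqrt[3]{1-9\varepsilon^2}+2}$, and then verifies asymptotically that $2/\sqrt{N-2}$ lies below this threshold when $\varepsilon=\varepsilon(N)$. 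Your version avoids the cubic entirely and is shorter; the paper's version has the mild advantage of producing an explicit, $\varepsilon$-uniform interval on which $\sin s \geqslant \varepsilon s$ holds, but that generality is not needed here. Both hinge on the same essential observation — that the exponent $-\tfrac{9}{10(N-3)}$ makes $1-\varepsilon(N) \asymp \tfrac{\log N}{N}$, which is asymptotically larger than the $\Theta(1/N)$ deficit of $\sin(s_0)/s_0$ from $1$ — and you identify this point correctly.
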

\begin{proof}
Initially, let $0 < \varepsilon < 1$. We want to get a lower bound on the first positive zero of $s \longmapsto \sin s - \varepsilon s$. By Taylor expansions we see that for all $s$, 
\begin{align*}
\cos s \leqslant 1 - \frac{s^2}{2} + \frac{s^4}{24}.
\end{align*}
Along with the Pythagorean theorem, this implies that for $0 \leqslant s \leqslant \pi / 2$,
\begin{align*}
\sin s \geqslant s \sqrt{1 - \frac{s^2}{3} + \frac{s^4}{24}-\frac{s^6}{576}}.
\end{align*} 
Hence for such $s$, we have 
\begin{align*}
\sin s - \varepsilon s \geqslant s \left(-\varepsilon + \sqrt{1 - \frac{s^2}{3} + \frac{s^4}{24}-\frac{s^6}{576}} \right).
\end{align*}
Here, both the left- and right-hand sides are non-negative for small positive values of $s$, so the first positive zero of $\sin s - \varepsilon s$ is at least as big as the first positive zero, $R$, of the right-hand side. We note that $R$ is a solution to the equation
\begin{align*}
s^6 - 24s^4 + 192s^2 - 576 \lp 1-\varepsilon^2 \rp = 0.
\end{align*}
Writing $y = s^2$, we get the cubic equation 
\begin{align*}
y^3 - 24 y^2 + 192 y - 576 \lp 1 - \varepsilon^2 \rp = 0
\end{align*}
whose discriminant is seen to be negative for all choices of $\varepsilon$ in $(0, 1)$. It follows that the cubic has a single real root, which is $4 \lp \sqrt[3]{1-9\varepsilon^2} + 2 \rp >0$. This proves that $\sin s \geqslant \varepsilon s$ for all 
\begin{align*}
0 \leqslant s \leqslant 2 \sqrt{\sqrt[3]{1-9 \varepsilon^2}+2}.
\end{align*} 
\par All that remains is to prove that for the explicit choice of $\varepsilon = \varepsilon(N)$ in the statement of the lemma, we have
\begin{align*}
\frac{1}{\sqrt{N-2}} < \sqrt{\sqrt[3]{1 - 9 \varepsilon(N)^2}+2}.
\end{align*}
The quotient of the right- and left-hand sides is
\begin{align}\label{p1-quotientfromhell}
\sqrt{N-2}\sqrt{\sqrt[3]{1 - 9 \varepsilon(N)^2}+2} \sim \sqrt{N} \cdot \sqrt{\frac{27 \log N}{20(N-3)}}.
\end{align} 
Hence, for $N$ sufficiently large, the quotient in (\ref{p1-quotientfromhell}) is bigger than $1$. This proves the lemma. 
\end{proof}
We now apply Lemma \ref{p1-lemma3.9} to give an explicit condition for a number to be $N$-good or to be $(N,j)$-good for any $j = 1, \ldots, N-3$. 
\begin{lemma}\label{p1-lemma3.10}
\textit{Let $N$ be sufficiently large. In order for $s$ to be $N$-good, it is sufficient that }
\begin{align*}
s \leqslant \frac{1}{N-2} \left( -\frac{1}{2} + \sqrt{\frac{1}{4} + 2(N-2)} \right).
\end{align*}
\textit{In order for $s$ to be $(N,j)$-good, it is sufficient that} $s \leqslant S\left(N,j, s_N^{(j-1)} \right) $ where
\begin{align}\label{p1-incfct}
S\left(N,j, s_N^{(j-1)} \right) := \frac{2 \left( -1 + \sqrt{1 + \varepsilon(N) s_N^{(j-1)}\left(\varepsilon(N) s_N^{(j-1)}+2\right)(N-j-2)} \right)}{\left(\varepsilon(N) s_N^{(j-1)}+ 2\right)(N-j-2)} .
\end{align}
\textit{Furthermore, $S\left(N,j, x \right)$ is increasing in $x$.}
\end{lemma}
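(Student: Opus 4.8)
The plan is to feed the sufficient conditions of Lemma~\ref{p1-lemma3.4} and Lemma~\ref{p1-lemma3.5} into elementary estimates: I will bound the functions $E(N-2,\cdot)$ and $G(N,j,s_N^{(j-1)},\cdot)$ from above by quadratic polynomials in $s$, solve those quadratics explicitly, and check that the positive roots are exactly the thresholds appearing in the statement.

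For the $N$-good part, combine $\sin(s/2)\leqslant s/2$ with the Bernoulli estimate
\[
\cos\lp\tfrac s2\rp^{2(N-2)}=\bigl(1-\sin(s/2)^2\bigr)^{N-2}\geqslant 1-(N-2)\sin(s/2)^2\geqslant 1-\tfrac{(N-2)s^2}{4}.
\]
Then $E(N-2,s)\leqslant \tfrac{(N-2)s^2}{2}+\tfrac s2-1$, and the right-hand side is $\leqslant 0$ precisely when $(N-2)s^2+s\leqslant 2$. The positive root of $(N-2)s^2+s-2$ equals $\tfrac1{N-2}\bigl(-\tfrac12+\sqrt{\tfrac14+2(N-2)}\bigr)$ by the quadratic formula, so any $s$ below it satisfies $E(N-2,s)\leqslant 0$ and, by Lemma~\ref{p1-lemma3.4}, is $N$-good.

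For the $(N,j)$-good part I would argue identically, the one new ingredient being a lower bound for the factor $\sin(s_N^{(j-1)}/2)$ in $G$. Applying Lemma~\ref{p1-lemma3.9} with argument $s_N^{(j-1)}/2$ requires $s_N^{(j-1)}\leqslant 4/\sqrt{N-2}$; this follows since $s_N^{(j-1)}\leqslant s_N$ by Lemma~\ref{p1-lemma3.6} and the defining relation $E(N-2,s_N)=0$ forces $(N-2)\sin(s_N/2)^2\leqslant\cos(s_N/2)^{2(N-2)}\leqslant 1$, whence $\sin(s_N/2)\leqslant(N-2)^{-1/2}$ and $s_N\leqslant 2\arcsin((N-2)^{-1/2})\leqslant 4/\sqrt{N-2}$ for large $N$. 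Hence $\sin(s_N^{(j-1)}/2)\geqslant\tfrac12\varepsilon(N)s_N^{(j-1)}$. Writing $m=N-j-2\geqslant 1$ and $a=\varepsilon(N)s_N^{(j-1)}$, together with $\sin(s/2)\leqslant s/2$ and $\cos(s/2)^{2m}\geqslant 1-\tfrac{ms^2}{4}$ this gives
\[
G\bigl(N,j,s_N^{(j-1)},s\bigr)\leqslant \tfrac{ms^2}{4}+\tfrac s2-\tfrac a2\Bigl(1-\tfrac{ms^2}{4}\Bigr)=\tfrac{m(a+2)}{8}\,s^2+\tfrac s2-\tfrac a2,
\]
and the positive root of the right-hand quadratic is $\tfrac{2(-1+\sqrt{1+am(a+2)})}{m(a+2)}$, which is exactly $S(N,j,s_N^{(j-1)})$ from (\ref{p1-incfct}); so $s\leqslant S(N,j,s_N^{(j-1)})$ yields $G\leqslant 0$ and, by Lemma~\ref{p1-lemma3.5}, that $s$ is $(N,j)$-good.

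Finally, since $x\mapsto\varepsilon(N)x$ is increasing, the monotonicity of $S(N,j,x)$ in $x$ reduces to that of the positive root $S$ of $\Psi(a,s):=\tfrac{m(a+2)}{8}s^2+\tfrac s2-\tfrac a2=0$ in $a$. Implicit differentiation gives $\tfrac{dS}{da}=-\Psi_a/\Psi_s=\bigl(\tfrac12-\tfrac{ms^2}{8}\bigr)\big/\bigl(\tfrac{m(a+2)}{4}s+\tfrac12\bigr)$, and since $\Psi(a,2/\sqrt m)=1+1/\sqrt m>0$ while $\Psi(a,0)=-a/2<0$, the root satisfies $0<S<2/\sqrt m$, making the numerator positive; hence $S$ increases with $a$, and therefore $S(N,j,x)$ increases with $x$. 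I expect the only step demanding real care to be the a priori bound $s_N^{(j-1)}\leqslant 4/\sqrt{N-2}$ needed to invoke Lemma~\ref{p1-lemma3.9}; the rest is a routine comparison of elementary functions, modulo double-checking the quadratic-formula identities against the displayed thresholds.
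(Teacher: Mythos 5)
Your proposal is correct and follows essentially the same approach as the paper: bound $E$ and $G$ from above by quadratics using $\sin(s/2)\leqslant s/2$ and $\cos(s/2)^{2m}\geqslant 1-ms^2/4$ (the paper proves the latter directly; you prove it via the Pythagorean identity and Bernoulli's inequality, an equivalent route), read off the positive roots, check the a priori bound on $s_N^{(j-1)}$ required by Lemma~\ref{p1-lemma3.9}, and differentiate to get monotonicity. Your two small variants — deriving $s_N\leqslant 4/\sqrt{N-2}$ from $E(N-2,s_N)=0$ and the convexity of $\arcsin$ rather than the paper's Taylor expansion showing $s_N\leqslant 2/\sqrt{N-2}$, and implicit rather than explicit differentiation for the monotonicity of $S$ (where your observation $\Psi(a,2/\sqrt m)>0$ also cleanly justifies that the quadratic root lies inside the interval on which the estimate is valid) — are fine and arguably slightly tidier, but do not change the substance.
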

\begin{proof}
For any integer $a \geqslant 1$ and $x \in \R$, we have the elementary inequality
\begin{align}\label{p1-cosineq}
\lp \cos x \rp^{a} \geqslant 1 - \frac{ax^2}{2},
\end{align}
which may be proved by inspecting the derivative of the difference between the left- and right-hand side. Using this estimate with $x = \tfrac{s}{2}$, we obtain the first claim from Lemma \ref{p1-lemma3.4}, the fact that $E(N-2,0) < 0$, and the estimate
\begin{align*}
E(N-2,s) &= (N-2)\sin \left(\frac{s}{2} \right)^{2} - \cos \left( \frac{s}{2} \right)^{ 2(N-2)} + \sin  \left(\frac{s}{2} \right) \\ 
&\leqslant (N-2) \frac{s^2}{4} + (N-2) \frac{s^2}{4} - 1 + \frac{s}{2} \\ 
&= \frac{N-2}{2}s^2 + \frac{1}{2}s - 1,
\end{align*}
where the right-hand side is also negative for $s = 0$. The positive root of this quadratic is exactly the claimed upper bound for $s$. \par 
Using (\ref{p1-cosineq}), we prove the second claim as follows: For $0 \leqslant s < 2/\sqrt{N-2}$ we have $(N-j-2)s^2/4 < 1$, so Lemma \ref{p1-lemma3.9} implies that for $s$ in this interval,
\begin{align*}
G\left(N,j, s_N^{(j-1)}, s \right) &= (N-j-2) \sin \left(\frac{s}{2} \right)^{ 2} - \sin \left(\frac{s_N^{(j-1)}}{2} \right) \cos \left(\frac{s}{2} \right)^{2(N-j-2)} + \sin \left(\frac{s}{2} \right) \\
&\leqslant (N-j-2) \frac{s^2}{4} + \frac{\varepsilon(N) s_N^{(j-1)}}{2} \left( \frac{N-j-2}{4}s^2 - 1 \right)  + \frac{s}{2}\\
&=  \frac{N-j-2}{4} \left( \frac{\varepsilon(N) s_N^{(j-1)}}{2} + 1 \right) s^2 + \frac{s}{2} - \frac{\varepsilon(N) s_N^{(j-1)}}{2}.
\end{align*}
This is also negative for $s = 0$, and the positive root of this quadratic is precisely the claimed upper bound for $s$. Assuming for now that the zero $\smash{s_N^{(j)}}$ of the smaller function is less than $2/\sqrt{N-2}$, we get the second claim. \par 
The function $S$ is increasing in its third argument: Writing $\varepsilon = \varepsilon (N)$, we see that 
\begin{align*}
\frac{\partial S(N,j,x)}{\partial x} = \frac{2\varepsilon \left(\varepsilon x(N-j-2)+2(N-j-2) + \sqrt{1+(N-j-2)\varepsilon x(\varepsilon x+2)} - 1 \right)}{(N-j-2)(\varepsilon x+2)^2 \sqrt{1+(N-j-2)\varepsilon x(\varepsilon x+2)}},
\end{align*}
which is visibly positive as $2(N-j-2) - 1 \geqslant 1$. \par 
It remains to justify the claim about the size of $\smash{s_N^{(j)}}$ relative to $2/\sqrt{N-2}$. By Lemma \ref{p1-lemma3.6}, for any fixed $N$ the function $j \longmapsto \smash{s_N^{(j)}}$ is decreasing. Hence it is enough to show that $s_N \leqslant 2/\sqrt{N-2}$. In order to prove this, it is enough to prove that $E \lp N-2, 2 / \sqrt{N-2} \rp \geqslant 0$ for $N$ large enough. Writing $x = \sqrt{N-2}$, we see from Taylor expansions that
\begin{align*}
E \lp N-2, \frac{2}{x} \rp &= x^2 \sin \lp \frac{1}{x} \rp^2 - \cos \lp \frac{1}{x} \rp^{2x^2} + \sin \lp \frac{1}{x} \rp \\
&= x^2 \lp \frac{1}{x} + O \lp x^{-3} \rp \rp^2 -e^{-1} + O \lp x^{-2} \rp + \frac{1}{x} + O \lp x^{-3} \rp \\[+0.7em]
&= 1 - e^{-1} + O \lp x^{-1} \rp
\end{align*}
as $x \longrightarrow \infty$. This proves the lemma. 
\end{proof}
We are now finally ready to prove Proposition \ref{p1-prop3.8}.\\\\
\textit{Proof of Proposition \ref{p1-prop3.8}.} Using Lemma \ref{p1-lemma3.10}, we make the following observation: The lemma implies that
\begin{align}\label{p1-dont}
s_N = s_N^{(0)} \geqslant \frac{1}{N-2} \left( -\frac{1}{2} + \sqrt{\frac{1}{4} + 2(N-2)} \right) =: x_0,
\end{align}
and hence 
\begin{align*}
s_N^{(1)} \geqslant S(N,1, s_N) \geqslant S(N,1, x_0) =: x_1.
\end{align*}
It follows by induction that
\begin{align*}
s_N^{(j)} \geqslant S\left(N,j, s_N^{(j-1)}\right) \geqslant S(N,j, x_{j-1}) =: x_j
\end{align*}
for all $j \leqslant N-3$. Consequently, for any $i$, $x_i$ is $(N,i)$-good. Hence, to prove Proposition \ref{p1-prop3.8}, it suffices to prove that $x_{N-3} \geqslant N^{-2.9}$. To this end, we want to prove that for any $j$ and $\alpha \in \left[-2.9, -2 \right]$\footnote[2]{We stress that the only relevance of the number $-2$ is that it is strictly larger than $-2.9$ and strictly smaller than $-\tfrac{1}{2}$. This last condition is necessary in order for the implication (\ref{p1-imp}) to furnish a contradiction to the definition of $x_0$ in (\ref{p1-dont}) under the assumption that $x_{N-3} < N^{-2.9}$.},
\begin{align}\label{p1-imp}
x_j < N^\alpha \Longrightarrow x_{j-1} < N^{\alpha + 9/10(N-3)}.
\end{align}
The relevance of this is the following. Since
\begin{align*}
-2.9 \leqslant -2.9 + \frac{9j}{10(N-3)} \leqslant -2
\end{align*}
for $j = 0, \ldots, N-3$, it follows by repeated applications of (\ref{p1-imp}) that if we have $x_{N-3} < N^{-2.9}$, then also $x_0 < N^{-2}$. This will contradict (\ref{p1-dont}) and hence Lemma \ref{p1-lemma3.10}, and this contradiction will prove the proposition. Therefore, our goal is now to prove (\ref{p1-imp}).\par 
Let $\alpha \in \left[ -2.9, -2 \right]$ and write $x = \varepsilon (N) x_{j-1}$ and $v = v(N,j) = N-j-2$ to ease the notation. If, for some $j$, we have $x_j < N^\alpha$, then 
\begin{align*}
N^\alpha > \frac{2}{(x + 2)v(N,j)} \left( -1 + \sqrt{1 + x(x+2)v(N,j)} \right).
\end{align*}
This inequality then implies that
\begin{align*}
\left( v N^\alpha \left( \frac{1}{2}x + 1 \right) + 1 \right)^2 > 1 + x(x+2)v = vx^2 + 2vx + 1,
\end{align*}
and therefore
\begin{align*}
\left( \frac{v^2 N^{2 \alpha}}{4} - v \right) x^2 + \left( \lp vN^\alpha + 1 \rp vN^\alpha  - 2v \right) x + v^2 N^{2 \alpha} + 2v N^\alpha > 0.
\end{align*}
Since the leading exponent of this quadratic is negative for $N$ large, the above inequality forces the quadratic to have two distinct roots and $x$ to lie between them. These roots are 
\begin{align*}
r_{\pm} &= \frac{2}{v^2 N^{2\alpha}-4v} \Bigg( 2v - \lp vN^\alpha + 1 \rp vN^\alpha 
\\ &\quad \quad \left. \pm \sqrt{\bigl( 2v - \lp vN^\alpha + 1 \rp vN^\alpha \bigr)^2 - \lp v^2 N^{2 \alpha} + 2vN^\alpha \rp \lp v^2 N^{2 \alpha} - 4v\rp }  \right).
\end{align*}
In particular, we see that $x$ must be less than
\begin{align}\label{p1-goodtoknow}
\nonumber r_- &= \frac{2}{v^2 N^{2\alpha}-4v} \Bigg( 2v - \lp vN^\alpha + 1 \rp vN^\alpha - \sqrt{\bigl( 2v - \lp vN^\alpha + 1 \rp vN^\alpha \bigr)^2 - \lp v^2 N^{2 \alpha} + 2vN^\alpha \rp \lp v^2 N^{2 \alpha} - 4v\rp }  \Bigg) 
\\ \nonumber &= -2 + \frac{4+2N^\alpha}{4-vN^{2 \alpha}} + \frac{2\sqrt{4 + (vN^\alpha + 1)^2 N^{2 \alpha} - 4N^\alpha (v N^\alpha + 1) - v^2 N^{4 \alpha} + 4 v N^{2 \alpha} - 2v N^{3 \alpha} + 8N^\alpha}}{4-vN^{2 \alpha}}
\\[+0.3em] \nonumber &= -2 + \frac{4+2N^\alpha}{4-vN^{2 \alpha}} + \frac{2\sqrt{ 4 + 4N^\alpha + N^{2 \alpha}}}{4-vN^{2 \alpha}} 
\\[+0.3em] &= -2 + \frac{8 + 4N^\alpha}{4 - v N^{2 \alpha}}.
\end{align}
We wish to compare $r_-$ to $N^\alpha$. We observe that $r_-$ is as large as possible when $j$ is as small as possible (since $4-vN^{2 \alpha}$ is increasing in $j$), meaning that it will be enough to estimate $r_-$ for $j = 0$. In this case $v = N-2$, whence
\begin{align}\label{p1-almost}
r_- &= \frac{4N^\alpha - 4N^{2 \alpha} + 2N^{2 \alpha + 1}}{4 + 2N^{2 \alpha} - N^{2 \alpha + 1}} <  \frac{N^\alpha \left( 1 + \tfrac{1}{2} N^{\alpha + 1} \right)}{1 + \tfrac{1}{2}N^{2 \alpha} - \tfrac{1}{4}N^{2 \alpha + 1}} < \frac{N^\alpha \left( 1 + \tfrac{1}{2} N^{\alpha + 1} \right)}{1 - \tfrac{1}{4}N^{2 \alpha + 1}} = N^\alpha Z(N, \alpha),
\end{align}
where we put
\begin{align*}
Z(N, \alpha) := \frac{1 + \tfrac{1}{2} N^{\alpha + 1}}{1 - \tfrac{1}{4}N^{2 \alpha + 1}}.
\end{align*}
Since $x = \varepsilon (N) x_{j-1}$, it now follows from (\ref{p1-goodtoknow}) and (\ref{p1-almost}) that
\begin{align*}
x_{j-1} < N^\alpha \varepsilon(N)^{-1} Z(N, \alpha).
\end{align*}
Next, we obtain
\begin{align*}
\log \left( Z(N, \alpha)^{10(N-3)/9} \right) &= \frac{10}{9}(N-3) \left( \log \left( 1 + \frac{1}{2}N^{\alpha + 1} \right) - \log \left( 1 - \frac{1}{4} N^{2 \alpha + 1} \right) \right)\\
&= \frac{10}{9}(N-3) \left( \frac{1}{2} N^{\alpha + 1} + O\left( N^{2 \alpha + 2} \right) \right) \\ &= \frac{5}{9}N^{\alpha + 2} + O\left(N^{\alpha + 1} \right).
\end{align*}
Furthermore, since
\begin{align*}
\log \varepsilon(N)^{-1} = \log \left( e^{-1/N} N^{9/10(N-3)} \right) = -\frac{1}{N} + \frac{9 \log N}{10(N-3)},
\end{align*}
when $N$ is large enough (independently of $\alpha$) we have
\begin{align*}
\log \left(\left( Z(N, \alpha) \varepsilon(N)^{-1}\right)^{10(N-3)/9} \right) &= \frac{5}{9} N^{\alpha + 2} + \frac{10}{9} (N-3) \left( -\frac{1}{N}+ \frac{9 \log N}{10(N-3)} \right) + O\left(N^{\alpha + 1} \right)
\\[+0.5em] &= \frac{5}{9} N^{\alpha + 2} - \frac{10(N-3)}{9N} + \log N + O\left(N^{\alpha + 1} \right) 
\\[+0.5em] &\leqslant \log N.
\end{align*}
Hence $Z(N, \alpha)\varepsilon(N)^{-1} < N^{9/10(N-3)}$, and it follows that for $N$ large enough (independently of $\alpha$), one has
\begin{align*}
x_{j-1} < N^{\alpha + 9/10(N-3)}.
\end{align*}
This proves the implication (\ref{p1-imp}), and the proof of Proposition \ref{p1-prop3.8} is therefore concluded. $\hfill \blacksquare$\\\\
\end{appendices}
This completes the proof of Proposition \ref{p1-Prop3.3}.\\[+1em] \par 
\textsc{Acknowledgements.} The author is very grateful to Anders Södergren for suggesting the problem, and for many helpful discussions.\\\\
\renewcommand{\section}[2]{}%
\renewcommand{\refname}{REFERENCES}
\begin{center}
\textsc{References}
\end{center}
{\small  }

\end{document}